\patchcmd{\thebibliography}{\leftmargin\labelwidth}{\leftmargin\labelwidth\addtolength\itemsep{-0.1\baselineskip}}{}{}
\author{
Zichao Dong\thanks{Extremal Combinatorics and Probability Group (ECOPRO), Institute for Basic Science (IBS), Daejeon, South Korea. Supported by the Institute for Basic Science (IBS-R029-C4). \texttt{$\{$zichao,hongliu$\}$@ibs.re.kr}. }
\and Jun Gao\thanks{Mathematics Institute, University of Warwick, Coventry, UK. Supported by ERC Advanced Grant 101020255 and the Institute for Basic Science (IBS-R029-C4). \texttt{gj950211@gmail.com}}
\and Ruonan Li\thanks{School of Mathematics and Statistics, Northwestern Polytechnical University, Xi'an, China. Supported by National Natural Science Foundation of China (No.~11901459, No.~12131013), Shaanxi Fundamental Science Research Project for Mathematics and Physics (22JSZ009), and China Scholarship Council (No.~202306290113).
\texttt{rnli@nwpu.edu.cn}. }
\and Hong Liu\footnotemark[1]
}
\date{}
\title{Induced rational exponents and bipartite subgraphs in $K_{s,s}$-free graphs}
\newtheorem{theorem}{Theorem}[section]
\newtheorem{lemma}[theorem]{Lemma}
\newtheorem{corollary}[theorem]{Corollary}
\newtheorem{proposition}[theorem]{Proposition}
\newtheorem{conjecture}[theorem]{Conjecture}
\newtheorem{claim}[theorem]{Claim}
\newtheorem{remark}[theorem]{Remark}
\crefname{remark}{remark}{remarks}
\crefname{claim}{claim}{claims}
\newenvironment{poc}{\begin{proof}[Proof of Claim]}{\end{proof}}
\newcommand*{\eqdef}{\stackrel{\mbox{\normalfont\tiny def}}{=}} 
\newcommand*{\veps}{\varepsilon}                                
\newcommand*{\E}{\mathbb{E}}                                    
\newcommand*{\N}{\mathbb{N}}                                    
\newcommand*{\cH}{\mathcal{H}}
\newcommand*{\cF}{\mathcal{F}}
\newcommand*{\cG}{\mathcal{G}}
\newcommand*{\cT}{\mathcal{T}}
\newcommand*{\cP}{\mathcal{P}}
\newcommand*{\cA}{\mathcal{A}}
\newcommand*{\cB}{\mathcal{B}}
\newcommand*{\cC}{\mathcal{C}}
\newcommand*{\cV}{\mathcal{V}}
\newcommand*{\cW}{\mathcal{W}}
\newcommand*{\cX}{\mathcal{X}}
\newcommand*{\cZ}{\mathcal{Z}}
\newcommand*{\vx}{\textbf{\textup{x}}}
\newcommand*{\vy}{\textbf{\textup{y}}}
\DeclareMathOperator{\ex}{ex}
\DeclareMathOperator{\Hom}{Hom}
\begin{document}

\maketitle

\begin{abstract}
In this paper, we study a general phenomenon that many extremal results for bipartite graphs can be transferred to the induced setting when the host graph is $K_{s, s}$-free. As manifestations of this phenomenon, we prove that every rational $\frac{a}{b} \in (1, 2), \, a, b \in \mathbb{N}_+$, can be achieved as Tur\'{a}n exponent of a family of at most $2^a$ induced forbidden bipartite graphs, extending a result of Bukh and Conlon [JEMS 2018]. Our forbidden family is a subfamily of theirs which is substantially smaller. A key ingredient, which is yet another instance of this phenomenon, is supersaturation results for induced trees and cycles in $K_{s, s}$-free graphs. We also provide new evidence to a recent conjecture of Hunter, Milojevi\'{c}, Sudakov, and Tomon [JCTB 2025] by proving optimal bounds for the maximum size of $K_{s, s}$-free graphs without an induced copy of theta graphs or prism graphs, whose Tur\'{a}n exponents were determined by Conlon [BLMS 2019] and by Gao, Janzer, Liu, and Xu [IJM 2025+]. 
\end{abstract}

\section{Introduction}
The \emph{Tur\'{a}n number} of a family of graphs $\cH$, denoted by $\ex(n, \cH)$, refers to the maximum number of edges in an $n$-vertex graph forbidding any graph in $\cH$ as a subgraph. If $\cH=\{H\}$, we instead  write $\ex(n, H)$. The study of Tur\'{a}n numbers dates back to Mantel \cite{mantel} and Tur\'{a}n \cite{turan}, determining $\ex(n, H)$ when $H$ is a clique. In general, the Erd\H{o}s--Stone--Simonovits theorem \cite{erdos_simonovits_1966,erdos_stone} finds sharp asymptotics of $\ex(n, H)$ for all non-bipartite $H$. In the bipartite case, the behavior of $\ex(n, H)$ gets much more mysterious. For instance, the correct asymptotics of $\ex(n, K_{4, 4})$ and $\ex(n, C_8)$ are unknown. We refer to \cite{furedi_simonovits} for an extensive survey on bipartite Tur\'{a}n numbers. 

The \emph{induced Tur\'{a}n number} of a family $\cH$, denoted by $\ex^*(n, \cH, s)$, is the maximum number of edges in an $n$-vertex graph forbidding subgraph $K_{s, s}$ and induced subgraph $H \, (\forall H \in \cH)$. Similarly, we write $\ex^*(n, H, s)$ when $\cH = \{H\}$. When $s$ is sufficiently large, the definition directly implies that $\ex^*(n, H, s) \ge \ex(n, H)$. The study of $\ex^*(n, \cH, s)$ is related to various problems in structural graph theory and discrete geometry. Readers are directed to \cite[Section 1]{hunter_milojevic_sudakov_tomon} for a survey on such connections. 

Very recently, Hunter, Milojevi\'{c}, Sudakov, and Tomon~\cite{hunter_milojevic_sudakov_tomon} conjectured that for every bipartite graph $H$, the Tur\'{a}n number $\ex(n, H)$ and the induced variant $\ex^*(n, H, s)$ cannot be far apart from each other: $\ex^*(n, H, s) =O_{H, s}(\ex(n, H))$. This conjecture is consistent with all known results by now. They~\cite{hunter_milojevic_sudakov_tomon} found upper bounds on $\ex^*(n, H, s)$ matching those current best known bounds on $\ex(n, H)$ when the bipartite graph $H$ has one side degree bounded (see \cite{furedi,alon_krivelevich_sudakov,girao_hunter,bourneuf_bucic_cook_davies}), as well as when $H$ is a tree (see \cite{scott_seymour_spirkl}), an even cycle (see \cite{bondy_simonovits,lazebnik_ustimenko_woldar}), or the cube graph (see \cite{erdos1964,erdos_simonovits}).

We believe that a more general phenomenon holds. That is, extremal results for bipartite graphs (such as Tur\'an problem, supersaturations etc.) can be extended to the induced setting when we impose a locally sparse condition (such as $K_{s,s}$-freeness) on the host graphs. In this paper, we propose the study of the \emph{meta-problem} ``transfering bipartite extremal results to induced setting in locally sparse graphs''. In what follows we discuss in details some positive such instances that we prove.

\subsection{Rational exponents}

A longstanding conjecture of Erd\H{o}s and Simonovits (see \cite{erdos1981} for instance) in extremal graph theory says that every rational number between $1$ and $2$ can be realized as some Tur\'{a}n exponent. That is, for any rational $q \in (1, 2)$, there exists a bipartite $H$ with $\ex(n, H) = \Theta(n^q)$. A major progress was made by Bukh and Conlon \cite{bukh_conlon}, who proved that every rational number between $1$ and $2$ can be realized as the Tur\'{a}n exponent of a finite family of graphs instead of a single graph. Since then, people have derived many partial results concerning a single graph \cite{kang_kim_liu,conlon_janzer_lee,janzer,jiang_jiang_ma,conlon_janzer}, each extending the range of exponents for which the conjecture is known. Despite attracting much attention in recent years, this conjecture on realizing rational exponents remains widely open. 

The first instance of the meta-problem that we propose is the following induced version of the rational exponent conjecture due to Erd\H{o}s and Simonovits. 

\begin{conjecture} \label{conj:ind_rational_exp}
    For any rational $q \in (1, 2)$, there exists a bipartite $H$ with $\ex^*(n, H, s) = \Theta_{s}(n^q)$. 
\end{conjecture}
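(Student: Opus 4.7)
The plan is to try to consolidate the finite forbidden family of this paper (and of Bukh--Conlon) into a single induced forbidden bipartite graph, leveraging the $K_{s,s}$-free hypothesis as the key lever. For $q \in (1, 2)$ rational, let $T = T_q$ be the rooted tree with rational edge-weights that Bukh--Conlon associate to $q$, and set $H_q := T^{(N)}$, the balanced $N$-blow-up of $T$ (each vertex of $T$ replaced by an independent set of size $N$, edges replaced by complete bipartite graphs), where $N = N(s, q)$ is to be chosen large. The lower bound $\ex^*(n, H_q, s) = \Omega_s(n^q)$ should follow from the Bukh--Conlon random algebraic construction: it is $K_{s,s}$-free for appropriate $s$, and for $N > k_0(q, s)$ it contains no $T^{(N)}$ even as a non-induced subgraph, hence none as an induced subgraph.

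The upper bound is the heart of the matter. Given a $K_{s,s}$-free $G$ on $n$ vertices with $|E(G)| \ge Cn^q$, I would first apply the induced-tree supersaturation in $K_{s,s}$-free graphs that this paper's abstract promises, obtaining $\Omega(n^{|V(T)|})$ induced copies of $T$ in $G$. Next, I would order $V(T)$ by BFS from the root and perform an iterative pigeonhole: fixing a near-extremal partial image $u_1, \dots, u_j$ of $v_1, \dots, v_j$, the remaining vertices have many induced extensions, so one can produce, at each leaf $v$ of $T$, a candidate ``slot'' $S_v$ of size $\gg N$. The $K_{s,s}$-freeness of $G$ should then let one prune each $S_v$ so that (i) the required bipartite edges of $T^{(N)}$ survive between adjacent slots and (ii) no spurious edges appear between non-adjacent slots, because too many spurious edges between two large slots would embed a $K_{s,s}$, and Kővári--Sós--Turán caps the count.

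The main obstacle, and the reason the single-graph version remains open even with the $K_{s, s}$-free hypothesis, is coupling this pruning across different pairs of slots simultaneously: pruning $S_u$ to kill bad edges into $S_v$ may destroy the good-edge guarantees from $S_u$ into a third slot $S_w$, and the BFS ordering alone need not commute with these constraints. One serious attack is a dependent-random-choice variant tuned to $K_{s,s}$-free hosts, iterated along the BFS order so that common-neighborhood rosters are preserved level by level while $K_{s,s}$-freeness controls the overlap decay. A natural fallback, if the direct consolidation fails, is to replace $T^{(N)}$ by an enriched graph obtained by attaching a rigid gadget (for example the theta graph $\Theta_{k,m}$, whose induced Tur\'an number is determined in this paper) to the root of $T$; the rigidity should force every near-blow-up embedding to be genuinely induced in the $K_{s,s}$-free host, thereby reducing the conjecture to the induced theta supersaturation already developed here. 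Either route would yield a single induced forbidden $H_q$ realizing the exponent $q$; the essential new ingredient beyond this paper is the level-by-level dependent-random-choice step, which is where I expect the real difficulty to live.
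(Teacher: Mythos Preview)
The statement you are attempting to prove is labeled a \emph{Conjecture} in the paper, and the paper does not prove it; indeed, the authors say explicitly that reducing their family $\cH$ to a single graph ``would require new ideas.'' So there is no paper proof to compare against, and your proposal should be read as a research plan rather than a proof.

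That said, your plan has a fatal structural flaw that kills it before the pruning step you identify as the main difficulty. Your candidate graph $H_q = T^{(N)}$ is the complete $N$-blow-up of the tree $T$: every edge of $T$ becomes a copy of $K_{N,N}$. Hence for any $N \ge s$, the graph $T^{(N)}$ contains $K_{s,s}$ as a subgraph, and therefore \emph{no} $K_{s,s}$-free host $G$ can contain $T^{(N)}$, induced or otherwise. This makes the upper bound $\ex^*(n, T^{(N)}, s) = O_s(n^q)$ impossible: in fact $\ex^*(n, T^{(N)}, s) = \ex(n, K_{s,s}) = \Theta_s(n^{2-1/s})$, independent of $q$. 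Your lower bound argument is correct but for the trivial reason that the Bukh--Conlon construction, being $K_{s,s}$-free, cannot contain any $K_{N,N}$.

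If instead you take $N < s$, the upper bound obstruction disappears, but now the lower bound is in genuine doubt: the Bukh--Conlon algebraic construction is designed to avoid the \emph{powers} $\cT_R^p$ (many copies of $T$ glued at the root set), not blow-ups of $T$, and there is no reason to expect it avoids $T^{(N)}$ for bounded $N$. More fundamentally, any single graph $H$ realizing exponent $q$ in the induced sense must itself be $K_{s,s}$-free (else the problem degenerates as above), so the target $H_q$ cannot be a complete blow-up of anything with an edge. The fallback idea of attaching a theta gadget to the root of $T$ is more promising in spirit, but you would need to argue both directions carefully, and nothing in the paper's theta or supersaturation results gives a mechanism for forcing a specific induced tree-plus-gadget from edge count $n^q$ alone.
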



To realize every rational between $1$ and $2$ as a Tur\'{a}n exponent, Bukh and Conlon \cite{bukh_conlon} constructed certain lifting families of balanced rooted trees. (We shall include the formal definitions in \Cref{sec:rational}.) Their upper bound follows easily from the supersaturation of trees, while their lower bound is an application of the random polynomial construction developed by Bukh \cite{bukh2015}. It is also worth mentioning that such constructions are useful in many other extremal problems \cite{conlon,ma_yuan_zhang,he_tait,bukh_tait,xu_zhang_ge}. 

As our first result, we are going to prove the following towards \Cref{conj:ind_rational_exp}. 

\begin{theorem} \label{thm:ind_rational_exp+}
    For every rational $q = \frac{a}{b} \in (1, 2)$ with $a, b \in \N_+$, there exists a family $\cH$ consisting of at most $2^a$ bipartite graphs such that $\ex^*(n, \cH, s) = \Theta_s(n^q)$. 
\end{theorem}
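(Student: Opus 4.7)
The plan is to adapt the Bukh--Conlon framework for realizing rational exponents~\cite{bukh_conlon} to the induced $K_{s,s}$-free setting. Fix a rational $q = a/b \in (1,2)$ in lowest terms. Following Bukh--Conlon, I would first choose a balanced rooted tree $T$ with root set $R$ whose associated density invariant produces the target exponent $q$. Their forbidden family consists of all graphs obtained by gluing a prescribed number of copies of $T$ along the common root set $R$ via arbitrary permutations; for our induced variant I would keep only a subfamily $\mathcal{H}$ of size $\le 2^a$, indexed by the essentially different edge-patterns that can appear on the shared root tuple (the construction engineered so that $2^a$ representatives suffice).

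For the upper bound, I assume a $K_{s,s}$-free graph $G$ on $n$ vertices has at least $C_s n^q$ edges. The main input is the induced supersaturation of trees in $K_{s,s}$-free graphs, which the introduction flags as a key ingredient established earlier in the paper. Applied to $T$, this forces $G$ to contain $\Omega(n^{q\cdot v(T) - e(T)})$ induced copies of $T$. A double-counting argument over root images then locates a single tuple $(v_1,\dots,v_{|R|})$ extending to many induced copies of $T$; a second pigeonhole step over the $\le 2^{\binom{|R|}{2}}$ possible edge-patterns among $\{v_1,\dots,v_{|R|}\}$ isolates many induced copies sharing both the root image and the internal root pattern. Stitching together the appropriate number of these copies realises an induced copy of the corresponding graph in $\mathcal{H}$, provided the non-root parts of distinct copies can be taken essentially disjoint; the $K_{s,s}$-freeness of $G$, together with a standard cleaning step removing high-degree vertices, supplies the required disjointness.

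For the lower bound, I would invoke Bukh's random polynomial construction~\cite{bukh2015}, which produces a random bipartite algebraic graph with $\Theta(n^q)$ edges that is automatically $K_{s,s}$-free for $s$ large in terms of $q$. Since $\mathcal{H}$ is a subfamily of Bukh--Conlon's family, their expectation estimate on the number of (non-induced) copies of each $H \in \mathcal{H}$ already applies. The new task is to extend this to \emph{induced} copies: one must verify that enforcing the prescribed non-edges among the root image does not blow up the count, which amounts to checking that the algebraic equations defining edges are generically independent from the inequations defining non-edges. A standard alteration argument then removes all induced copies while retaining $\Theta(n^q)$ edges. The main obstacle lies here: pushing the algebraic dimension bookkeeping from subgraph to induced-subgraph counts while simultaneously maintaining $K_{s,s}$-freeness, a step not immediate from the original Bukh--Conlon analysis, since induced copies require jointly controlling edge-indicator random variables that are no longer all of the same sign.
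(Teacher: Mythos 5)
Your proposal has two genuine gaps, one on each side of the bound.

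For the upper bound, the step you summarize as ``stitching together the appropriate number of these copies'' is exactly where the main difficulty of the theorem lives, and your sketch does not contain the idea needed to carry it out. After supersaturation and pigeonholing on the root image, you have many induced copies of $T$ sharing a root tuple, but these copies can intersect each other in completely arbitrary ways; their union is only guaranteed to lie in the large Bukh--Conlon family of all unions of $p$ labeled copies agreeing on $R$, whose size grows with $p$. To land in a family of size at most $2^a$ one must first \emph{regularize the intersections}: extract copies $\varphi_1,\dots,\varphi_q$ such that every non-root vertex $w$ of $T$ is mapped either to one common vertex or to $q$ pairwise distinct vertices, with different tree vertices mapped to disjoint sets. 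This is done in the paper by a Ramsey-type selection lemma on $t$-tuples, and it is what makes the target family the set of $(p;S)$-lifts, indexed by subsets $S$ of the \emph{non-root} vertices (hence the $2^a$ bound). Your indexing of $\cH$ by ``edge-patterns on the shared root tuple'' cannot serve this purpose: since all copies are induced, the root image is an independent set, so there is only one such pattern, and no pigeonholing over root patterns controls how the non-root parts of distinct copies overlap. Only after the intersections are regularized does the $K_{s,s}$-freeness step you invoke (a K\H{o}v\'{a}ri--S\'{o}s--Tur\'{a}n plus Tur\'{a}n-type argument on an auxiliary graph) remove the extra edges between copies and produce an induced member of $\cF^p(T;R)$. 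Without the regularization step the argument only yields the trivial version of the theorem with a family whose size depends on $p$.

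For the lower bound, the ``main obstacle'' you describe does not exist, and trying to redo Bukh's random algebraic analysis for induced copies is unnecessary. Forbidding a graph as an \emph{induced} subgraph is a weaker constraint than forbidding it as a subgraph: any graph that is $\cH$-subgraph-free is automatically induced-$\cH$-free. Since $\cH$ is a subfamily of the Bukh--Conlon family $\cT$, their extremal construction (already free of all subgraph copies of $\cT$ after alteration) is induced-$\cH$-free with no further work; there is no need to control non-edge indicators or to re-run the dimension-counting for induced copies. The only genuinely new requirement is $K_{s,s}$-freeness, which holds for free once $s$ exceeds the order of the forbidden graphs (as $K_{s,s}$ then contains members of $\cH$ as subgraphs), and which the paper improves quantitatively by taking clique blowups of the Bukh--Conlon construction to get the $s$-dependence in the lower bound.
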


We remark that the family $\cH$ above is a subfamily of the Bukh--Conlon family \cite[Section 1]{bukh_conlon}. \Cref{thm:ind_rational_exp+} trivially holds if the ``subfamily'' restriction is dropped. Indeed, if we temporarily denote by $\cT$ the Bukh--Conlon bipartite graph family which attains Tur\'{a}n exponent $q \in (1, 2) \cap \mathbb{Q}$, then the larger family $\cT_+$ obtained by including all supergraphs on the same vertex set of some graph in $\cT$ has $\ex^*(n, \cT_+, s) = \Theta_s(n^q)$, achieving induced Tur\'{a}n exponent $q$. 

The bulk of work in proving~\Cref{thm:ind_rational_exp+} is to establish the upper bounds. Our main novelty is to deduce $\ex^*(n, \cH, s) = O_s(n^q)$ for a smaller and more structured subfamily $\cH \subset \cT$ that we find via a Ramsey argument (see~\Cref{rmk:family} for comparison of $\cH$ and $\cT$). Hence our result is slightly stronger than the previous one due to Bukh and Conlon \cite{bukh_conlon}, for we achieve the same (induced) Tur\'{a}n exponent via a smaller family. Unfortunately, this is not strong enough to settle \Cref{conj:ind_rational_exp}; further reducing $\cH$ to a single graph would require new ideas. Also, the short Bukh--Conlon supersaturation argument \cite[Lemma 1.1]{bukh_conlon} has no direct extension to the induced setting, and our proof is more involved. 

Use standard notations $\delta(G)$ and $\Delta(G)$ for the minimum and maximum vertex degree in $G$, respectively. We say that $G$ is \emph{$K$-almost-regular} if $\Delta(G) \le K\delta(G)$, provided that $K > 1$ is some constant. A key step in the proof is the following supersaturation result for induced trees.

\begin{theorem} \label{thm:tree_supersaturation}
    Let $T$ be a $t$-vertex tree. If $G$ is an $n$-vertex $K$-almost-regular $K_{s, s}$-free graph with average degree $d \ge (4Kt)^{6s} s^3$, then there are at least $n(\frac{d}{2K})^{t-1}$ labeled induced copies of $T$ in $G$. 
\end{theorem}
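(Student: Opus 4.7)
The plan is to embed $T$ into $G$ greedily in BFS order. Root $T$ at any vertex and list its vertices as $v_1, v_2, \ldots, v_t$ so that each $v_i$ ($i \geq 2$) has a unique already-listed neighbor $v_{p(i)}$ in $T$. It suffices to show that every partial induced embedding $\phi$ of $\{v_1, \ldots, v_{i-1}\}$ admits at least $d/(2K)$ valid extensions $\phi(v_i)$; combined with the $n$ starting choices for $\phi(v_1)$ this gives $n(d/(2K))^{t-1}$ labeled induced copies.

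Set $x = \phi(v_{p(i)})$ and $S = \phi(\{v_1, \ldots, v_{i-1}\}) \setminus \{x\}$, so $|S| \leq t - 2$. A valid image $y$ lies in $N(x)$, is distinct from every previously placed image, and is non-adjacent in $G$ to every $w \in S$. Since $G$ is $K$-almost-regular, $|N(x)| \geq d/K$ and injectivity forbids at most $t - 1$ candidates. The core task is to bound the ``bad'' set $\{y \in N(x) : y \sim w \text{ for some } w \in S\}$, which has size at most $\sum_{w \in S} |N(x) \cap N(w)|$. Hence if each codegree $|N(x) \cap N(w)|$ is at most $d/(4Kt)$, the bad set has size at most $(t-2)\cdot d/(4Kt) \leq d/(4K)$, leaving at least $d/K - (t-1) - d/(4K) \geq d/(2K)$ valid extensions. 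The high-intensity piece of this bound is handled by the Kővári--Sós--Turán-type inequality $\sum_{y \in N(x)} \binom{|N(y) \cap S|}{s} \leq (s-1)\binom{|S|}{s}$ applied to the $K_{s,s}$-free bipartite graph between $N(x)$ and $S$, which controls the number of $y \in N(x)$ with many $S$-neighbors; the remaining task is to control the codegrees themselves, since for a single pair a codegree can in principle be as large as $Kd$.

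Therefore my plan centers on enforcing the global codegree bound $|N(x) \cap N(w)| \leq d/(4Kt)$. I plan to preprocess $G$ by iteratively discarding one endpoint of every ``heavy'' pair whose codegree exceeds $d/(4Kt)$, bounding the number of heavy pairs via the double-counting identity
\[
  \sum_{\{u, v\} : u \neq v} \binom{c_{uv}}{s-1} \;=\; \sum_{W \subseteq V(G),\, |W| = s-1} \binom{|N(W)|}{2}
\]
together with KST-type estimates on the right-hand side using that $|N(W)| \leq Kd$ for every $W$. This shows that only a controlled fraction of vertices needs to be removed, and---with careful calibration---the resulting subgraph remains $O(K)$-almost-regular with average degree $\Omega(d)$; the generous exponent $6s$ in the hypothesis $d \geq (4Kt)^{6s}s^3$ is precisely the slack that absorbs these losses. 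Once codegrees are globally bounded by $d/(4Kt)$, the greedy induction of the previous paragraph runs uniformly for all $i \in \{2, \ldots, t\}$ and yields the claimed $n(d/(2K))^{t-1}$ labeled induced copies. The principal technical obstacle I anticipate is precisely the calibration of this codegree cleanup: ensuring it does not destroy almost-regularity by more than a constant factor (say, replacing $K$ by $2K$) while still killing all heavy pairs is the heart of the argument, and an alternative fallback---if a global cleanup is too destructive---is to instead choose the first $s$ images $\phi(v_1), \ldots, \phi(v_s)$ via a Markov-style averaging that guarantees their pairwise codegrees are small, then run the KST argument of paragraph~2 unconditionally for $i \geq s + 2$.
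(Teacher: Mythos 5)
Your greedy skeleton (BFS order, bound the bad set $\sum_{w\in S}|N(x)\cap N(w)|$, KST for vertices with at least $s$ neighbours among the embedded images) is exactly the right shape, but the mechanism you propose for codegree control has a genuine gap. First, the counting you base the cleanup on is quantitatively useless: from $\sum_{\{u,v\}}\binom{c_{uv}}{s-1}=\sum_{|W|=s-1}\binom{|N(W)|}{2}\le\binom{n}{s-1}\binom{Kd}{2}$ you only get that the number of pairs with codegree at least $d/(4Kt)$ is $O\bigl(n^{s-1}(Kd)^2/\binom{d/(4Kt)}{s-1}\bigr)$, which for $s\ge 3$ (and $d$ anywhere near the allowed lower bound $(4Kt)^{6s}s^3$, which is independent of $n$) exceeds the trivial bound $n^2$; so it does not show that ``only a controlled fraction of vertices needs to be removed.'' Second, and more fundamentally, the cleanup itself cannot work: take the paper's own lower-bound construction (\Cref{prop:blowup}), a clique blowup with blobs of size $\Theta(s)$. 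There every pair inside a blob has codegree $\approx d$, so deleting one endpoint of each heavy pair forces you down to one survivor per blob; the surviving graph has $\approx n/s$ vertices and average degree $\approx d/s$. Thus the claim that the cleaned subgraph stays $O(K)$-almost-regular with average degree $\Omega(d)$ is false, and even granting the weaker outcome, the greedy then yields only about $(n/s)\bigl(d/(2Ks)\bigr)^{t-1}$ copies, short of the stated $n(\frac{d}{2K})^{t-1}$ by a factor $s^{\Theta(t)}$ that no calibration of constants absorbs. Your fallback does not close this either: controlling the pairwise codegrees of only the first $s$ images is not enough, because at every later step the bad set involves $N(x)\cap N(w)$ for the current parent $x$ and \emph{every} previously embedded $w$, and your KST inequality only handles candidates adjacent to at least $s$ embedded images, not those adjacent to a single $w$ of huge codegree with $x$.

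The missing idea (and what the paper does) is to control codegrees \emph{adaptively along the embedding} rather than globally, and with a much more generous threshold. Set $\beta=1-\frac{1}{3s}$ and $X(v)=\{u:\deg(u,v)\ge d^{\beta}\}$; applying K\H{o}v\'ari--S\'os--Tur\'an inside $G[N(v)\cup X(v)]$ (here almost-regularity gives $|N(v)|\le Kd$) shows $|X(v)|\le d^{\beta}$ for every $v$. Then at each step one embeds the new vertex into $N(v_{k'})$ while avoiding $\bigcup_i X(v_i)$ and $\bigcup_{i\ne k'}N(v_i)$: avoiding the $X$-sets costs only $td^{\beta}$ candidates, and because every pair of already-embedded vertices was forced to have codegree below $d^{\beta}$, each earlier vertex contributes at most $d^{\beta}$ bad candidates, so at least $\frac{d}{K}-2td^{\beta}\ge\frac{d}{2K}$ choices remain. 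This avoids any global deletion and is exactly what survives the blowup example, where your preprocessing breaks down.
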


As the host graph is almost regular, our lower bound $\Omega(nd^{t-1})$ is optimal. Later on, we also obtain supersaturation results for even cycles (see \Cref{thm:many_ind_cycle,thm:C4_supersaturation}). These supersaturation results are manifestations of the ``transfering bipartite extremal results to induced setting'' phenomena as well. Considering the importance of supersaturation results and their wide range of applications (see for instance \cite{corsten_tran,pikhurko_yilma,dubroff_gunby_narayanan_spiro}), deducing supersaturation results for various different induced bipartite graphs under local sparsity condition could be an interesting direction for future work.

For lower bound constructions, when $s$ is larger than the order of each graph in $\cT_-$, one can use $\ex(n, \cT) \le \ex(n, \cH) \le \ex^*(n, \cH, s)$. To make the dependency on $s$ better and more explicit, we instead take clique blowups of constructions in \cite[Section 2.3]{bukh_conlon}. This blowup construction will also imply the lower bounds in \Cref{thm:ind_theta,thm:ind_prism} below. 

\begin{proposition} \label{prop:blowup}
    Let $H$ be a connected bipartite graph on $h$ vertices. If $\ex(n, H) = \Omega(n^{1+\alpha})$ for some $\alpha \in (0, 1)$, then for $n \ge s \ge h$ we have $\ex^*(n, H, s) = \Omega_h(s^{1-\alpha}n^{1+\alpha})$. 
\end{proposition}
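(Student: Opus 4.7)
The plan is to exhibit an $n$-vertex $K_{s,s}$-free graph $G$ with no induced copy of $H$ and with $\Omega_h(s^{1-\alpha}n^{1+\alpha})$ edges via a \emph{clique blowup} of a smaller $H$-free host. Set $k = \lfloor s/h \rfloor$ and $m = \lfloor n/k \rfloor$, and let $G_0$ be an $H$-free graph on $m$ vertices with $e(G_0) = \Omega(m^{1+\alpha})$, which exists by hypothesis (the small-$m$ regime, where $n = \Theta_h(s)$, is handled trivially by taking $G = K_s$ or a disjoint union thereof). Form $G = G_0[K_k]$ by replacing each $v \in V(G_0)$ with a clique $V_v$ on $k$ vertices and, for each edge $uv \in E(G_0)$, placing all $k^2$ edges between $V_u$ and $V_v$. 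Padding with isolated vertices to reach $n$ vertices total, we obtain
\[
    e(G) \;\geq\; k^2\, e(G_0) \;=\; \Omega\!\left(k^2 m^{1+\alpha}\right) \;=\; \Omega\!\left(k^{1-\alpha} n^{1+\alpha}\right) \;=\; \Omega_h\!\left(s^{1-\alpha} n^{1+\alpha}\right),
\]
since $k = \Theta_h(s)$.

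The first task is to verify that $G$ is $K_{s,s}$-free. Assume for contradiction $K_{s,s} \subseteq G$ with parts $A, B$. Each cluster has $k < s$ vertices, so both $A$ and $B$ meet several clusters; set
\[
    P = \{v \in V(G_0) : A \cap V_v \neq \emptyset\}, \quad Q = \{v \in V(G_0) : B \cap V_v \neq \emptyset\}, \quad R = P \cap Q.
\]
The $K_{s,s}$-requirement forces $pq \in E(G_0)$ for every pair of distinct $p \in P, q \in Q$. Combining the capacity bounds $|A \cap V_p| \leq k$, $|B \cap V_q| \leq k$, and $|A \cap V_r| + |B \cap V_r| \leq k$ for $r \in R$ (since $A, B$ are disjoint inside $V_r$), we deduce $2s = |A| + |B| \leq k \cdot |P \cup Q|$, so $|P \cup Q| \geq 2s/k$. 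A balanced partition $R = R_1 \sqcup R_2$ then produces disjoint $X = (P \setminus R) \cup R_1 \subseteq P$ and $Y = (Q \setminus R) \cup R_2 \subseteq Q$ with $\min(|X|,|Y|) \geq \lceil s/k \rceil \geq h$. Consequently $G_0 \supseteq K_{X,Y} \supseteq K_{h,h} \supseteq H$ --- the last inclusion holding since $H$ is bipartite with each side of size at most $h - 1$ --- contradicting the $H$-freeness of $G_0$.

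The second task is to show that $G$ contains no induced $H$. Being bipartite, $H$ is triangle-free, so any induced copy $H'$ of $H$ in $G$ uses at most two vertices from each clique $V_v$. If two vertices $u, u'$ of $H'$ lie in a common cluster $V_w$, then by construction of the blowup they are adjacent and have identical other-cluster neighbourhoods in $H'$; hence $u, u'$ form \emph{clique twins} in $H'$, and likewise in $H$ via the isomorphism. However, a connected bipartite graph on $h \geq 4$ vertices admits no such pair: bipartiteness forces the common neighbourhood of any two adjacent vertices to be empty, which would make the twin pair an isolated edge of $H$, contradicting connectedness. Thus each cluster contributes at most one vertex to $H'$, so $H'$ projects to an induced copy of $H$ in $G_0$, again contradicting $H$-freeness.

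The main obstacle is the $K_{s,s}$-free verification --- specifically, the bookkeeping when $R = P \cap Q$ is non-empty. There one must combine all three capacity inequalities on $V_p, V_q, V_r$ and choose the partition of $R$ so that both $|X|$ and $|Y|$ reach $\lceil s/k \rceil$. The choice $k = \lfloor s/h \rfloor$ is calibrated so that the extracted biclique is $K_{h,h}$, which is precisely the structure forbidden by the $H$-freeness of $G_0$.
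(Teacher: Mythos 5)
This is essentially the paper's construction: a clique blowup (blobs of size $\Theta(s/h)$) of an extremal $H$-free graph, with induced-$H$-freeness via the adjacent-twins/distinguisher observation and $K_{s,s}$-freeness by extracting a $K_{h,h}$ whose vertices lie in distinct blobs, so the approach matches the paper's. One small slip in your $K_{s,s}$ check: a literally \emph{balanced} split of $R$ can fail (if $P\setminus R$ is small, a balanced $R_1$ may leave $|X|<\lceil s/k\rceil$), and the bound $\lceil s/k\rceil$ on both sides is not always attainable; what your three capacity inequalities do guarantee is a (generally unbalanced) split with $|X|,|Y|\ge h$, since $|P|\ge s/k\ge h$, $|Q|\ge s/k\ge h$ and $|P\cup Q|\ge 2s/k\ge 2h$, and $|X|,|Y|\ge h$ is all the argument needs.
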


\subsection{Theta graphs}

Bounding the even cycle Tur\'{a}n number $\ex(n, C_{2\ell})$ is one of the most well-studied bipartite Tur\'{a}n problems. In general, an upper bound $\ex(n, C_{2\ell}) = O_{\ell} \bigl( n^{1+\frac{1}{\ell}} \bigr)$ was originally claimed by Erd\H{o}s \cite{erdos1964} and first published by Bondy and Simonovits \cite{bondy_simonovits}. Up to now, the aforementioned upper bound is known to be tight---meaning $\ex(n, C_{2\ell}) = \Theta_{\ell} \bigl( n^{1+\frac{1}{\ell}} \bigr)$---only for $\ell = 2, 3, 5$ (see \cite{brown,benson,wenger}). The dependency on $\ell$ has seen many improvements throughout the years \cite{pikhurko,bukh_jiang,he}. 

Although the order of magnitude of $\ex(n, C_{2\ell})$ in general is mysterious, more is known if, instead of forbidding two paths between a pair of vertices, one forbids many such paths. The theta graph $\Theta_{\ell}^t$ consists of $t$ many internally vertex disjoint $\ell$-edge paths sharing exactly the beginning and the ending vertices (so $C_{2\ell} = \Theta_{\ell}^2$). This graph contains $2 + (\ell-1)t$ vertices and $\ell t$ edges. 
Faudree and Simonovits \cite{faudree_simonovits} deduced that $\ex(n, \Theta_{\ell}^t) = O_{\ell, t} \bigl( n^{1+\frac{1}{\ell}} \bigr)$, and later Conlon \cite{conlon} showed the tightness of this bound provided that $t$ is sufficiently large in $\ell$. Bukh and Tait \cite{bukh_tait} obtained some precise results on the dependency of $\ex(n, \Theta_{\ell}^t)$ in $t$. Despite the correct asymptotics of $\ex(n, C_8)$ is unknown, Verstra\"{e}te and Williford \cite{verstraete_williford} obtained $\ex(n, \Theta_4^3) = \Theta(n^{5/4})$. 

We shall prove the following result, which provides further evidence to the conjecture of Hunter, Milojevi\'{c}, Sudakov, and Tomon~\cite{hunter_milojevic_sudakov_tomon}.

\begin{theorem} \label{thm:ind_theta}
    For any integer $\ell\ge 2$, there exists some sufficiently large $L$, such that 
    \[
    \Omega_{\ell, t} \bigl( s^{1-\frac{1}{\ell}} \cdot n^{1+\frac{1}{\ell}} \bigr) \le \ex^*(n, \Theta_{\ell}^t, s) \le O_{\ell, t} \bigl( (\ell t)^{20s} \cdot n^{1+\frac{1}{\ell}} \bigr)
    \]
    holds for every integer $t \ge L$. 
\end{theorem}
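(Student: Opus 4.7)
Apply \Cref{prop:blowup}. Note that $\Theta_\ell^t$ is a connected bipartite graph on $2 + (\ell-1)t$ vertices and, by Conlon \cite{conlon}, $\ex(n, \Theta_\ell^t) = \Omega_{\ell, t}(n^{1+1/\ell})$ once $t$ is sufficiently large in $\ell$. Setting $L$ at least this threshold and plugging $\alpha = 1/\ell$ into \Cref{prop:blowup} gives $\ex^*(n, \Theta_\ell^t, s) \ge \Omega_{\ell, t}(s^{1-1/\ell} n^{1+1/\ell})$, as claimed.

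\textbf{Upper bound strategy.} Let $G$ be a $K_{s,s}$-free graph on $n$ vertices with $|E(G)| \ge C_0 (\ell t)^{20s} n^{1+1/\ell}$ for a suitably large $C_0 = C_0(\ell, t)$; the aim is to find an induced $\Theta_\ell^t$. The plan is to mirror the classical Faudree--Simonovits argument for $\ex(n, \Theta_\ell^t)$, but with each counting step replaced by its induced and $K_{s,s}$-free analogue. First, the standard Erd\H{o}s--Simonovits dyadic pigeonhole passes to a $K$-almost-regular subgraph $G' \subseteq G$ on $n' \le n$ vertices of average degree $d' \ge c_1 (\ell t)^{20s} (n')^{1/\ell}$, still $K_{s,s}$-free and without an induced $\Theta_\ell^t$. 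Since $d'$ exceeds $(4K(\ell+1))^{6s} s^3$, applying \Cref{thm:tree_supersaturation} with $T = P_{\ell+1}$ produces at least $n'(d'/2K)^\ell$ labeled induced $\ell$-paths in $G'$; pigeonholing over the $n'^2$ ordered endpoint pairs yields $(u, v)$ with at least $M := (d'/2K)^\ell / n'$ induced $\ell$-paths between them, where $M$ surpasses any prescribed polynomial in $(\ell t)^s$.

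\textbf{Extracting the induced theta.} Let $\mathcal{P}$ be the family of induced $\ell$-paths from $u$ to $v$ in $G'$ and write $I(P) = V(P) \setminus \{u,v\}$ for the internal vertex set of $P \in \mathcal{P}$. I seek $t$ paths $P_1, \ldots, P_t \in \mathcal{P}$ that are pairwise internally disjoint and admit no $G'$-edge between $I(P_i)$ and $I(P_j)$ for $i \ne j$; together with $u, v$ these form an induced $\Theta_\ell^t$. This is carried out in two substeps. First, I extract a subfamily $\mathcal{P}' \subseteq \mathcal{P}$ of $m := (\ell t)^{10s}$ pairwise internally disjoint paths: if some vertex $w$ lay on too many members of $\mathcal{P}$, positional pigeonholing would fix a position $i \in \{1, \ldots, \ell-1\}$ at which $w$ is internal on many paths, and a Kővári--Sós--Turán argument applied to the bipartite graph between the $i$-step $u$-to-$w$ predecessors and the $(\ell-i)$-step $w$-to-$v$ successors would force $K_{s,s}$ in $G'$, a contradiction; hence every vertex lies on few paths and a greedy selection yields $\mathcal{P}'$. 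Second, define the auxiliary graph $H$ on $\mathcal{P}'$ by $P \sim_H P'$ iff $G'$ has an edge joining $I(P)$ and $I(P')$. $K_{s,s}$-freeness of $G'$ forces the clique number $\omega(H)$ to be $O_\ell(s)$, since many pairwise-linked paths produce a sufficiently dense bipartite graph among their internal vertex sets to contain $K_{s,s}$. A standard Ramsey-type bound (of the form $R(O_\ell(s), t) \le O_\ell(t^{O_\ell(s)})$) then extracts an independent set of size $t$ in $H$ from the $m$ vertices of $\mathcal{P}'$, and the corresponding $t$ paths yield the desired induced $\Theta_\ell^t$.

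\textbf{Main obstacle.} The crux is the second substep, where $K_{s,s}$-freeness must be carefully leveraged to cap the clique number of $H$; the factor $(\ell t)^{20s}$ in the theorem is precisely the Ramsey overhead this extraction incurs, and fine-tuning the constants there (together with verifying the positional Zarankiewicz argument of the first substep) is where the bulk of the technical work lies.
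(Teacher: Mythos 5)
Your lower bound is exactly the paper's: \Cref{prop:blowup} applied with Conlon's construction, so that part is fine. The upper bound, however, has a genuine gap at the first extraction step. After pigeonholing you only know that some pair $(u,v)$ is joined by $M$ induced $\ell$-paths, where $M$ is a constant depending on $\ell,t,s$ (not growing with $n$), and your route to pairwise internally disjoint paths rests on the claim that a vertex $w$ lying on many of these paths forces a $K_{s,s}$ via a Zarankiewicz argument between ``predecessors'' and ``successors'' of $w$. That claim is false: take $u$ adjacent to $x_1,\dots,x_N$, each $x_i$ adjacent to $w$, $w$ adjacent to $y_1,\dots,y_N$, each $y_j$ adjacent to $v$, and no other edges. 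This graph is $K_{3,3}$-free (only $u,w,v$ have degree $\ge 3$), yet it contains $N^2$ induced $4$-edge paths from $u$ to $v$, all passing through $w$, and there are no edges at all between the predecessor set $\{x_i\}$ and the successor set $\{y_j\}$. So $K_{s,s}$-freeness gives no bound on the number of paths through a single vertex, and since $M$ is only a constant, nothing prevents \emph{all} $M$ paths at your pigeonholed pair from sharing an internal vertex; the greedy selection of $\mathcal{P}'$ then produces nothing. This loss of control is precisely why the paper does not fix the pair by a naive count: it first proves a supersaturation statement for induced $2\ell$-cycles (\Cref{thm:many_ind_cycle}, relying on \Cref{lem:degenarate-C_2k} to show degenerate homomorphic $2\ell$-cycles are a negligible fraction, plus Sidorenko and \Cref{coro:KST}), and then chooses $(u_*,v_*)$ by a weighted averaging that simultaneously guarantees many $\ell$-walks, few pairs sharing internal vertices, and few pairs with cross edges relative to $|\cP_{u_*,v_*}|^2$; the internally disjoint subfamily is finally extracted with the red--blue lemma (\Cref{lem:red-blue}). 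Some global input of this kind (a bound on degenerate pairs relative to the total homomorphism count) is exactly what your local KST argument is missing, and your use of \Cref{thm:tree_supersaturation} with $T=P_{\ell+1}$ does not substitute for it.

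Two smaller points. In your second substep, $K_{s,s}$-freeness does not give $\omega(H)=O_\ell(s)$: the cross edges among $k$ pairwise-linked paths give density only about $1/(2\ell^2)$ on roughly $k\ell$ vertices, so \Cref{coro:KST} kicks in only once $k\gtrsim s(4\ell^2)^s$, i.e.\ the clique bound is exponential in $s$; moreover invoking Ramsey $R(\omega_0,t)$ on top of that would inflate the final bound to $(\ell t)^{\Theta(st)}$ rather than $(\ell t)^{20s}$. The fix (as in the paper's \Cref{thm:theta_upper} and in Section 3.1) is to avoid Ramsey altogether: if the auxiliary graph had independence number below $t$ it would have $\gtrsim m^2/(2t)$ edges, and a single application of \Cref{coro:KST} to the union of the internal vertex sets already yields the contradiction with $m$ only polynomial in $(\ell t)^{s}$. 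But these are quantitative repairs; the substantive missing ingredient is the global control of intersecting path pairs described above.
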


\subsection{Prism graphs}

The prism graph $C^{\square}_{\ell}$ consists of two vertex-disjoint $\ell$-cycles and a perfect matching between the two cycles. See \hyperlink{figtwo}{Figure~1} for an illustration. Notice that $C^{\square}_{\ell}$ is bipartite if and only if $\ell$ is even. 

\centerline{
\begin{tikzpicture}
    \clip (-1, -3) rectangle (6, 2.5);
    
    \foreach \m in {1, 2, 3, 4}
        {
        \foreach \n in {0.75, 1.75, -0.75, -1.75}
            \draw[fill = black] (\m,\n) circle (0.075);
        \draw[thick] (\m,0.75) -- (\m,1.75);
        \draw[thick] (\m,-0.75) -- (\m,-1.75);
        }
    
    \draw[fill = black] (-0.5,0.5) circle (0.075);
    \draw[fill = black] (-0.5,-0.5) circle (0.075);
    \draw[fill = black] (5.5,0.5) circle (0.075);
    \draw[fill = black] (5.5,-0.5) circle (0.075);
    
    \draw[thick] (-0.5,0.5) -- (-0.5,-0.5);
    \draw[thick] (-0.5,0.5) -- (1,1.75);
    \draw[thick] (-0.5,0.5) -- (1,-0.75);
    \draw[thick] (-0.5,-0.5) -- (1,-1.75);
    \draw[thick] (-0.5,-0.5) -- (1,0.75);
    
    \draw[thick] (5.5,0.5) -- (5.5,-0.5);
    \draw[thick] (5.5,0.5) -- (4,1.75);
    \draw[thick] (5.5,0.5) -- (4,-0.75);
    \draw[thick] (5.5,-0.5) -- (4,-1.75);
    \draw[thick] (5.5,-0.5) -- (4,0.75);
    \draw[thick] (1,0.75) -- (4,0.75);
    \draw[thick] (1,1.75) -- (4,1.75);
    \draw[thick] (1,-0.75) -- (4,-0.75);
    \draw[thick] (1,-1.75) -- (4,-1.75);
    
    \node at (2.5,-2.5) {\textbf{\hypertarget{figtwo}{Figure 1:}} The prism graph $C^{\square}_{10}$. }; 
\end{tikzpicture}
}

Observe that $C_4^{\square}$ is the cube graph $Q_8$. It is a notoriously difficult long-standing open problem to determine the order of magnitude of $\ex(n, Q_8)$, and the best known result \cite{erdos_simonovits,pinchasi_sharir} up to now is 
\[
\Omega(n^{3/2}) \le \ex(n, C_4) \le \ex(n, Q_8) \le O(n^{8/5}). 
\]
Partly motivated by the cube problem, people study prism graphs. A celebrated dependent random choice argument \cite{alon_krivelevich_sudakov} shows that $\ex(n, C_{2\ell}^{\square}) = O(n^{5/3})$. He, Li, and Feng \cite{he_li_feng} determined $\ex(n, C_{2\ell+1}^{\square})$ for every $\ell \ge 1$ and large $n$, and conjectured that $\ex(n, C_{2\ell}^{\square}) = o(n^{5/3})$. Very recently, Gao, Janzer, Liu and Xu~\cite{gao_janzer_liu_xu} resolved this conjecture. In particular, they established $\ex(n, C^{\square}_{2\ell}) = \Theta_{\ell} (n^{3/2})$ for every $\ell \ge 4$. 


Our last result extends this optimal bound to the induced setting. 

\begin{theorem} \label{thm:ind_prism}
    For any integer $\ell \ge 10$, we have that 
    \[
    \Omega_{\ell}(s^{1/2} \cdot n^{3/2}) \le \ex^*(n, C_{2\ell}^{\square}, s) \le O_{\ell}(6^ss^{\ell^2} \cdot n^{3/2}). 
    \]
\end{theorem}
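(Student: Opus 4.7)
The prism $C_{2\ell}^{\square}$ is a connected bipartite graph on $h = 4\ell$ vertices, and by the Gao--Janzer--Liu--Xu theorem $\ex(n, C_{2\ell}^{\square}) = \Theta_{\ell}(n^{3/2})$. Applying Proposition~\ref{prop:blowup} with $\alpha = 1/2$ immediately yields $\ex^*(n, C_{2\ell}^{\square}, s) = \Omega_{\ell}(s^{1/2} n^{3/2})$ for all $n \ge s \ge 4\ell$; for $s < 4\ell$ the bound is trivially true up to an absolute constant.

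\textbf{Upper bound, setup.} My plan is to adapt the Gao--Janzer--Liu--Xu argument for $\ex(n, C_{2\ell}^{\square}) = O_\ell(n^{3/2})$ to the induced setting, feeding it the induced even-cycle supersaturation (the cycle analogue of Theorem~\ref{thm:tree_supersaturation} obtained elsewhere in the paper) in place of the usual $C_{2\ell}$ supersaturation. Given a $K_{s,s}$-free graph $G$ with $e(G) \ge C_\ell \cdot 6^s s^{\ell^2} \, n^{3/2}$, I first apply two standard reductions that preserve $K_{s,s}$-freeness and can only decrease the number of induced copies of $C_{2\ell}^{\square}$: pass to a bipartite subgraph, and then to an Erd\H{o}s--Simonovits almost-regular subgraph. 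The outcome is a bipartite, $K$-almost-regular, $K_{s,s}$-free subgraph $G'$ with average degree $d \ge c_\ell \cdot 6^s s^{\ell^2}$.

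\textbf{Supersaturation and prism extraction.} Applying induced $C_{2\ell}$-supersaturation to $G'$ produces $\Omega_\ell(n \, d^{2\ell - 1})$ labeled induced $2\ell$-cycles, and a convexity/averaging step pinpoints an edge $e = u_1 u_{2\ell}$ lying in $\Omega_\ell(d^{2\ell - 2})$ of them. Fix one such induced cycle $C = u_1 u_2 \cdots u_{2\ell}$. I would build a partner cycle $C' = v_1 v_2 \cdots v_{2\ell}$ greedily, selecting $v_i \in N(u_i) \setminus V(C)$ one index at a time, subject to the requirements that $v_i v_{i-1} \in E(G')$, that $v_i$ is non-adjacent to every $u_j$ with $j \ne i$, and that $v_i$ is non-adjacent to every previously chosen $v_j$ with $|i - j| > 1$ (modulo $2\ell$). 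The $K_{s,s}$-free condition bounds the codegree of every already-chosen pair by $s - 1$, so each of the $O(\ell^2)$ non-adjacency constraints eliminates at most $s-1$ candidates; this accounts for the $s^{\ell^2}$ factor in the threshold. The $6^s$ factor comes from a preparatory Ramsey-type step that partitions candidate neighborhoods by their adjacency profile to $V(C)$, producing a monochromatic family on which the greedy selection can be carried out without interference.

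\textbf{Main obstacle.} The genuine difficulty is the prism extraction step: controlling the many global non-edge constraints that ensure $C'$ is induced and that $C$ and $C'$ are joined only by the matching $\{u_i v_i\}$. Supersaturation produces many copies of the desired subgraph but says nothing about chords, so the $K_{s,s}$-freeness must be invoked at every non-adjacency step to bound the number of forbidden candidates, and these invocations have to be combined carefully without over-counting. The restriction $\ell \ge 10$ most likely enters as a technical slack allowing the greedy extraction to avoid a small number of pathological chord patterns; I expect it could be relaxed with additional case analysis but the clean range $\ell \ge 10$ suffices for the stated bound.
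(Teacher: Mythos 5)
Your lower bound is fine and is essentially the paper's argument: apply \Cref{prop:blowup} with $\alpha = 1/2$ (the paper in fact only needs $\ex(n,C_4)=\Omega(n^{3/2})$, since $C_4\subseteq C_{2\ell}^{\square}$, rather than the full Gao--Janzer--Liu--Xu theorem).

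The upper bound, however, has a genuine gap at exactly the step you flag as the ``main obstacle,'' and the gap is not a technicality. First, your quantitative mechanism is wrong: $K_{s,s}$-freeness does \emph{not} bound the codegree of a pair of vertices by $s-1$ (that would be $K_{2,s}$-freeness); under mere $K_{s,s}$-freeness a pair can have linearly many common neighbors, so the claim that each non-adjacency constraint eliminates at most $s-1$ candidates fails, and with it the accounting that produces the $s^{\ell^2}$ factor. Second, and more fundamentally, the greedy construction of the partner cycle $C'$ has no engine: at each step you need $v_i$ to be a common neighbor of $u_i$ and $v_{i-1}$, but neither the density $e(G)\gtrsim n^{3/2}$ nor almost-regularity ($d\approx\sqrt n$) gives any lower bound on such codegrees --- in a polarity/incidence graph every codegree is at most $1$, so the candidate set can be empty (or of size $1$ and then wiped out by a single constraint). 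Finding a prism at the $n^{3/2}$ threshold is hard precisely because one cannot grow the second cycle vertex-by-vertex from the first; this is why the paper (following Gao--Janzer--Liu--Xu) instead proves induced $C_4$-supersaturation (\Cref{thm:C4_supersaturation}), splits induced $4$-cycles into thick and thin according to diagonal codegrees, handles the thick case by dependent random choice plus \Cref{lem:rich_to_prism}, and in the thin case builds an auxiliary graph on $E(G)$ whose edges are thin induced $4$-cycles and extracts a non-degenerate $2\ell$-cycle there (via Janzer's regularization and degenerate-cycle lemma), which is what corresponds to an induced prism. Your invocation of induced $C_{2\ell}$-supersaturation plus an averaging over edges does not substitute for any of this. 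Finally, your explanation of the hypothesis $\ell\ge 10$ is also off: in the paper it arises from the degenerate-cycle count in the auxiliary graph (\Cref{claim:degenerate_homo}, where one needs roughly $m^{1/\ell}(\log m)^4\ll d^{1/3}$ with $m\approx nd$), not from slack in a greedy extraction.
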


\medskip 

\noindent\textbf{Organization.}
We begin with collecting some tools and establishing \Cref{prop:blowup} in \Cref{sec:tools}. In~\Cref{sec:rational}, we prove~\Cref{thm:tree_supersaturation} and deduce \Cref{thm:ind_rational_exp+} from it. We then derive the upper bounds of  \Cref{thm:ind_theta,thm:ind_prism} in \Cref{sec:theta,sec:prism}, respectively. Finally, in \Cref{sec:remark} we conclude the paper with some remarks and open problems.

\section{Preliminaries and constructions} \label{sec:tools}

Throughout this paper, we systematically omit non-essential floors and ceilings. 

In a graph $G = (V, E)$, define for any subset of vertices $S \subseteq V$ the \emph{common neighborhood} of $S$ as 
\[
N(S) \eqdef \bigl\{v \in V \setminus S : \text{$uv \in E$ holds for each vertex $u \in S$} \bigr\}. 
\]
Let $\deg(S) \eqdef |N(S)|$ be the \emph{common degree} of $S$ in $G$. When $S = \{u_1, \dots, u_k\}$ is specific, we slightly abuse the notations by writing $N(u_1, \dots, u_k) \eqdef N(S)$ and $\deg(u_1, \dots, u_k) \eqdef \deg(S)$. 

We first introduce a technical result which allows us to pass to a $K$-almost-regular induced subgraph when proving upper bounds in \Cref{thm:ind_rational_exp+,thm:ind_theta,thm:ind_prism}. 

\begin{lemma} \label{lem:almost_reg}
    Suppose $\alpha, C > 0$ and $K \eqdef 2^{\frac{4}{\alpha}+2}$. For $n$-vertex graph $G$ with $e(G) \ge C \cdot n^{1+\alpha}$, there is a $K$-almost-regular induced $m$-vertex subgraph $H$ with $e(H) \ge \frac{C}{4} \cdot m^{1+\alpha}$ and $m \ge \frac{C^{\frac{\alpha+4}{2\alpha+4}}}{K} \cdot n^{\frac{\alpha}{2\alpha+4}}$. 
\end{lemma}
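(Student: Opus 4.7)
The plan is to execute a two-pass regularization argument in the spirit of Erdős--Simonovits.

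\textbf{Pass 1 (low-degree pruning).} Starting from $G$, I would iteratively delete any vertex whose degree in the current induced subgraph is strictly less than $\frac{C}{4} (n')^{\alpha}$, where $n'$ is the current number of vertices. A telescoping/integral bound shows that the total number of edges deleted throughout the process never exceeds $\sum_{k \le n} \frac{C}{4} k^{\alpha} \le \frac{C}{4} n^{1+\alpha}$. Hence the process halts at a nonempty induced subgraph $G_1$ on $n_1$ vertices with $\delta(G_1) \ge \frac{C}{4} n_1^{\alpha}$ and $e(G_1) \ge \frac{3C}{4} n^{1+\alpha} \ge \frac{3C}{4} n_1^{1+\alpha}$. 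Comparing $e(G_1)$ with $\binom{n_1}{2}$ also yields the polynomial relation $n_1 \gtrsim C^{1/2} n^{(1+\alpha)/2}$.

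\textbf{Pass 2 (dyadic degree pigeonhole and secondary pruning).} I would bucket the vertices of $G_1$ by dyadic degree, $V_j \eqdef \{v : 2^j \le d_{G_1}(v) < 2^{j+1}\}$. Since all degrees lie in $\bigl[\frac{C}{4} n_1^{\alpha},\, n_1\bigr]$, the number of nonempty buckets is at most $L \le \log_2\bigl(4 n_1^{1-\alpha}/C\bigr) + 1$. An averaging argument selects a band $U$ consisting of $\log_2(K/2)$ consecutive classes whose union carries at least a $\log_2(K/2)/L$ fraction of all edge endpoints of $G_1$; the advertised $K = 2^{4/\alpha+2}$ is tuned precisely so that $\log_2(K/2) = 4/\alpha + 1$ dominates the relevant logarithmic factor in $L$. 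After a final round of low-degree pruning inside $G_1[U]$ (analogous to Pass 1, but with a threshold tailored to $U$), the resulting induced subgraph $H$ satisfies $\Delta(H) \le K \delta(H)$ and $e(H) \ge \frac{C}{4} m^{1+\alpha}$, where $m \eqdef |V(H)|$.

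\textbf{Lower bound on $m$ and accounting.} Combining $e(H) \ge \frac{C}{4} m^{1+\alpha}$ with the fraction of edges preserved from $G$ (tracing through Pass 2 starting from $e(G_1) \ge \frac{3C}{4} n_1^{1+\alpha}$) and with the trivial upper bound $e(H) \le K m^2/2$ produces, after elementary algebra, the stated lower bound $m \ge \frac{C^{(\alpha+4)/(2\alpha+4)}}{K} \cdot n^{\alpha/(2\alpha+4)}$.

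\textbf{Main obstacle.} Nothing here is conceptually new; the only real labour is constant-chasing. The two successive pigeonhole/pruning rounds must be calibrated so that they collectively land on exactly $K = 2^{4/\alpha+2}$, edge density $\frac{C}{4} m^{1+\alpha}$, and the rather awkward explicit exponent $\frac{\alpha}{2\alpha+4}$ in the lower bound on $m$. The bookkeeping is elementary but finicky, and the tight interplay between the threshold $\frac{C}{4}(n')^{\alpha}$ in Pass 1 and the bandwidth $\log_2(K/2)$ in Pass 2 is what makes the two factors of $2^{2/\alpha+1}$ combine correctly.
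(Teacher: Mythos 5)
Your Pass 1 is fine (standard low-degree pruning, and the edge-loss bound $\sum_{k\le n}\frac{C}{4}k^{\alpha}\le \frac{C}{4}n^{1+\alpha}$ is correct), but Pass 2 has a genuine gap: restricting to a narrow band of dyadic degree classes and taking the \emph{induced} subgraph does not preserve any guaranteed fraction of edges. First, ``a $\log_2(K/2)/L$ fraction of edge endpoints lies in $U$'' says nothing about $e(G_1[U])$, since an edge may have only one endpoint in the band. Concretely, $G_1$ can be (essentially) a complete bipartite graph $K_{a,b}$ with $a\approx n_1^{\alpha}$ and $b\approx n_1$: this survives Pass 1 with $\delta(G_1)\ge \frac{C}{4}n_1^{\alpha}$ and $e(G_1)\ge \frac{3C}{4}n_1^{1+\alpha}$, yet the two degree values differ by a factor $\approx n_1^{1-\alpha}\gg K/2$, so \emph{every} band of $\log_2(K/2)$ consecutive dyadic classes meets only one side and induces no edges at all. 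No ``secondary pruning inside $G_1[U]$'' can recover from that. Second, even in graphs where some band does induce edges, your selection only guarantees a $\Theta(1/\log n)$ fraction of $e(G_1)$, and there is no mechanism forcing the vertex count of $H$ to shrink correspondingly; hence the target density $e(H)\ge \frac{C}{4}m^{1+\alpha}$ (with the same constant $C/4$, not $C/\log n$) is not established. The final accounting for the bound on $m$ then rests on these uncontrolled quantities.

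The missing idea is the density-increment iteration that the paper uses (and which is the standard Erd\H{o}s--Simonovits mechanism precisely because the dyadic-band shortcut fails): if the vertices of degree at least $2^{i}K'C\,n_i^{\alpha}$ carry at least half the edge count $2^{i-1}C\,n_i^{1+\alpha}$, one passes to an induced subgraph $G_{i+1}$ on $\approx n_i/K'$ vertices (top-degree vertices $A_i$ plus a random set $B_i$) whose density constant has \emph{doubled}, i.e.\ $e(G_{i+1})\ge 2^{i+1}C\,n_{i+1}^{1+\alpha}$; the iteration must stop after $k$ steps with $2^kC\le n_k^{1-\alpha}$, and at the stopping time the high-degree vertices carry at most $2^{k-1}C\,n_k^{1+\alpha}$ edges, so deleting them and then pruning vertices of degree below $2^{k-2}C\,n_k^{\alpha}$ yields a $4K'$-almost-regular $H$ with $e(H)\ge 2^{k-2}C\,n_k^{1+\alpha}\ge \frac{C}{4}m^{1+\alpha}$; the bound $m\ge \frac{C^{(\alpha+4)/(2\alpha+4)}}{K}n^{\alpha/(2\alpha+4)}$ then falls out of $n_k\ge n/(2K')^k$ together with $2^kC\le n_k$. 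The doubling of the constant at each step is what absorbs the losses that your single band-selection step cannot, so as written your argument does not prove the lemma.
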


\Cref{lem:almost_reg} is in spirit the same as \cite[Lemma~4.2]{hunter_milojevic_sudakov_tomon}. We remark that one can prove the theorems above only using \cite[Lemma~4.2]{hunter_milojevic_sudakov_tomon}. However, our statement is technically stronger, for the number of vertices in our located subgraph grows with $n$. This advantage will help us to avoid some subtlety in the proofs of \Cref{thm:ind_rational_exp+,thm:ind_theta,thm:ind_prism}, which could be of independent interest in other applications. 

\begin{proof}[Proof of \Cref{lem:almost_reg}]
    We are going to construct a sequence of graphs $G = G_0, G_1, \dots, G_k$, where $G_i$ is an induced subgraph of $G_{i-1}$ (and $G$) with $e(G_i) \ge 2^iC \cdot n_i^{1+\alpha}$ for $n_i \eqdef |V(G_i)|$ and $i = 1, \dots, k$. 

    Write $K' \eqdef 2^{4/\alpha}$ and assume that $G_i \, (i = 0, 1, \dots)$ has been generated. Consider
    \[
    U_i \eqdef \bigl\{ u \in V(G_i) : \deg_{G_i}(u) \ge 2^i K' C \cdot n_i^{\alpha} \bigr\}. 
    \]
    If $\sum_{u \in U_i} \deg_{G_i}(u) < 2^{i-1} C \cdot n_i^{1+\alpha}$, then we terminate the process and set $k \eqdef i$. Otherwise, we can find $A_i \subseteq V(G_i)$ with $|A_i| = \frac{n_i}{2K'}$ such that $\sum_{a \in A_i} \deg_{G_i}(a) \ge 2^{i-1}C \cdot n_i^{1+\alpha}$ (e.g.~we can take $A_i$ to be the set of $\frac{n_i}{2K'}$ highest degree vertices in $G_i$). Independently on $A_i$, we pick $B_i^* \subseteq V(G_i)$ with $|B_i^*| = \frac{n_i}{2K'}$ uniformly at random. For each edge emanated from some vertex in $A_i$, its another vertex appears in $B_i^*$ with probability $\binom{n_i-1}{n_i/(2K')-1} / \binom{n_i}{n_i/(2K')} = \frac{1}{2K'}$. Then
    \[
    \E \bigl( e(G_i[A_i \cup B_i^*]) \bigr) \ge \frac{1}{2} \cdot \frac{1}{2K'} \cdot \sum_{a \in A_i} \deg_{G_i}(a) \ge \frac{2^{i-3}C}{K'} \cdot n_i^{1+\alpha} = 2^{i+1}C \cdot \Bigl(\frac{n_i}{K'}\Bigr)^{1+\alpha}, 
    \]
    where the last equality follows from $K' = 2^{4/\alpha}$. So, we can find $B_i \subseteq V(G_i)$ with $|B_i| = \frac{n_i}{2K'}$ such that $e(G[A_i \cup B_i]) \ge 2^{i+1}C \cdot (\frac{n_i}{K'})^{1+\alpha}$. Generate the induced subgraph $G_{i+1} \eqdef G_i[A_i \cup B_i]$. 

    For each $i$, we have $n_{i+1} \ge \frac{n_i}{2K'}$, and so $n_k \ge \frac{n}{(2K')^k}$. Moreover, 
    \[
    2^k C \cdot n_k^{1+\alpha}\le 2^kC\cdot \Big(\frac{n_{k-1}}{K'}\Big)^{1+\alpha} \le e(G_k) \le n_k^2 \implies 2^k C \le n_k^{1-\alpha} \le n_k \le n. 
    \]
    Let $H$ be the graph obtained from $G_k$ by removing all vertices in $U_k$ and then greedily removing each vertex of degree at most $2^{k-2} C \cdot n_k^{\alpha}$. It then follows from the definition of $U_k$ that the minimum and the maximum degree of $H$ satisfy 
    \[
    2^{k-2} C \cdot n_k^{\alpha} \le \delta(H) \le \Delta(H) \le 2^k K'C \cdot n_k^{\alpha}, 
    \]
    and hence $H$ is $4K'$-almost-regular. The graphs stop at $G_k$ implies $\sum_{u \in U_k} \deg_{G_k}(u) \le 2^{k-1} C \cdot n_k^{1+\alpha}$. Write $m \eqdef |V(H)| \le n_k$. Then
    \[
    e(H) \ge e(G_k) - 2^{k-1} C \cdot n_k^{1+\alpha} - 2^{k-2} C \cdot n_k^{1+\alpha} \ge 2^{k-2} C \cdot n_k^{1+\alpha} \ge \frac{C}{4} \cdot m^{1+\alpha}. 
    \]
    Moreover, from $n_k \ge \frac{n}{(2K')^k}, \, n_k \ge n_k^{1-\alpha} \ge 2^k C$, and $K' = 2^{4/\alpha}$ we deduce that
    \[
    n_k \cdot n_k^{\log_2(2K')} \ge \frac{n}{(2K')^k} \cdot (2^k C)^{\log_2(2K')} = C^{\log_2(2K')} \cdot n \implies n_k\ge C^{\frac{\alpha+4}{2\alpha+4}} \cdot n^{\frac{\alpha}{2\alpha+4}}. 
    \]
    Substituting $K = 2^{4/\alpha+2}$, we conclude that $m \ge \frac{e(H)}{\Delta(H)} \ge \frac{n_k}{4K'} \ge \frac{C^{\frac{\alpha+4}{2\alpha+4}}}{K} \cdot n^{\frac{\alpha}{2\alpha+4}}$. 
\end{proof}

We also recall the K\H{o}v\'{a}ri--S\'{o}s--Tur\'{a}n theorem, a cornerstone result in bipartite Tur\'{a}n problems. 

\begin{theorem}[\cite{kovari_sos_turan}] \label{thm:KST}
    Let $s, t \in \N_+$ with $t \ge s \ge 2$. Then $\ex(n, K_{s,t}) \le \frac{1}{2}(t-1)^{\frac{1}{s}}n^{2-\frac{1}{s}} + \frac{1}{2}(s-1)n$. 
\end{theorem}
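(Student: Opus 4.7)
The plan is to prove this by the standard double counting argument on stars $K_{1,s}$, which is by now textbook but worth laying out carefully in light of the $\frac{1}{2}(s-1)n$ additive term. Let $G$ be an $n$-vertex $K_{s,t}$-free graph and consider the count
\[
N \eqdef \#\bigl\{(v, S) : v \in V(G),\ S \in \tbinom{V(G)}{s},\ S \subseteq N(v)\bigr\}.
\]
Counting by $v$ first gives $N = \sum_{v \in V(G)} \binom{\deg(v)}{s}$, while counting by $S$ first gives $N = \sum_{S} \deg(S) \le (t-1) \binom{n}{s}$, where the inequality uses $K_{s,t}$-freeness: no $s$-set can have $t$ common neighbors.

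Next I would convert the lower bound $\sum_v \binom{\deg(v)}{s} \ge (t-1)\binom{n}{s}$ into a bound on the average degree $d \eqdef \frac{2e(G)}{n}$ via convexity. The function $x \mapsto \binom{x}{s}$ (defined by the falling factorial $\frac{x(x-1)\cdots(x-s+1)}{s!}$) is convex on $[s-1, \infty)$, so Jensen's inequality yields $\sum_v \binom{\deg(v)}{s} \ge n \binom{d}{s}$ at least when $d \ge s-1$ (the case $d < s-1$ is handled separately and gives a trivially better bound on $e(G)$). Combining with the upper bound for $N$ and using $\binom{n}{s} \le \frac{n^s}{s!}$ and $\binom{d}{s} \ge \frac{(d-s+1)^s}{s!}$ whenever $d \ge s-1$, I obtain
\[
(d - s + 1)^s \le (t-1)\, n^{s-1}.
\]
Taking $s$-th roots, this gives $d \le (t-1)^{1/s} n^{1 - 1/s} + (s-1)$. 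Multiplying by $\frac{n}{2}$ and using $e(G) = \frac{nd}{2}$ delivers the claimed bound.

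The argument is routine; the only mildly delicate point is justifying the convexity step uniformly, and the cleanest way is to split on whether $d \ge s-1$ (apply Jensen as above) or $d < s-1$ (in which case $e(G) < \frac{(s-1)n}{2}$ is already below the bound). No Ramsey-style argument, no probabilistic input, and no dependence on the earlier lemmas of this section are needed; the proof is purely a double count plus convexity and does not interact with the main results of the paper.
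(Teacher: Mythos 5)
The paper does not prove this statement at all: it is the K\H{o}v\'{a}ri--S\'{o}s--Tur\'{a}n theorem, quoted with a citation and used as a black box, so there is no in-paper proof to compare against. Your argument is the classical double-counting proof, and it does deliver exactly the stated constants: $K_{s,t}$-freeness gives $\sum_{v}\binom{\deg(v)}{s}\le (t-1)\binom{n}{s}$, convexity gives $\sum_{v}\binom{\deg(v)}{s}\ge n\binom{d}{s}$, and the estimates $\binom{d}{s}\ge\frac{(d-s+1)^s}{s!}$ (valid for $d\ge s-1$) and $\binom{n}{s}\le\frac{n^s}{s!}$ yield $d\le (t-1)^{1/s}n^{1-1/s}+(s-1)$, hence $e(G)=\frac{nd}{2}\le\frac{1}{2}(t-1)^{1/s}n^{2-\frac{1}{s}}+\frac{1}{2}(s-1)n$; the case $d<s-1$ is indeed trivial since then $e(G)<\frac{1}{2}(s-1)n$. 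Two blemishes should be fixed in the write-up, neither fatal. First, you state the double count with the orientation reversed (``the lower bound $\sum_v\binom{\deg(v)}{s}\ge(t-1)\binom{n}{s}$''); the count gives an \emph{upper} bound on this sum, and since your subsequent chain uses the correct direction this is only a typo. Second, Jensen applied to a function convex merely on $[s-1,\infty)$ is not literally justified when some individual degrees lie below $s-1$, even if the average does not; the standard repair is to extend $x\mapsto\binom{x}{s}$ by $0$ on $(-\infty,s-1)$, note that this extension is convex on all of $\R$ and agrees with $\binom{\deg(v)}{s}$ at every nonnegative integer, and then apply Jensen to the extension. With that one-line patch your proof is complete and entirely self-contained, requiring none of the paper's other lemmas.
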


\begin{corollary} \label{coro:KST}
    Let $c \in (0, 1)$ and $G$ be an $n$-vertex $K_{s,s}$-free graph. If $n \ge \frac{s-1}{c^s}$, then $e(G) \le cn^2$. 
\end{corollary}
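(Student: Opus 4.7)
The plan is a direct application of Theorem~\ref{thm:KST} with $t=s$, which yields
\[
e(G) \le \tfrac{1}{2}(s-1)^{1/s} n^{2-1/s} + \tfrac{1}{2}(s-1)n.
\]
I will show that under the hypothesis $n \ge (s-1)/c^s$, each of the two terms on the right is at most $\frac{c}{2}n^2$, which gives the desired conclusion $e(G) \le cn^2$.

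For the first (dominant) term, the inequality $\tfrac{1}{2}(s-1)^{1/s}n^{2-1/s} \le \tfrac{c}{2}n^2$ is equivalent, after dividing by $\tfrac{n^{2-1/s}}{2}$, to $(s-1)^{1/s} \le c\,n^{1/s}$, i.e.\ $n \ge (s-1)/c^s$. This is exactly the given hypothesis. For the second (lower-order) term, the inequality $\tfrac{1}{2}(s-1) \le \tfrac{c}{2}n$ rearranges to $n \ge (s-1)/c$, and since $c \in (0,1)$ implies $c^s \le c$, one has $(s-1)/c^s \ge (s-1)/c$, so this too follows from the hypothesis. Summing the two bounds gives $e(G) \le cn^2$.

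There is no real obstacle here; the only thing to double-check is the comparison $c^s \le c$ for $s \ge 2$ and $c \in (0,1)$, which is immediate. The corollary can therefore be stated in at most a few lines.
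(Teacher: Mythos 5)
Your proof is correct and follows essentially the same route as the paper: apply Theorem~\ref{thm:KST} with $t=s$ and bound each of the two terms by $\tfrac{c}{2}n^2$, using $n \ge (s-1)/c^s$ for the main term and $c^s \le c$ for the linear term. The paper merely writes the same estimates as a single chain of inequalities, so there is nothing to add.
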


\begin{proof}
    It follows from $n \ge \frac{s-1}{c^s}$ that $s - 1 \le c^sn$. Since $c \in (0, 1)$ hence $c^s \le c$, \Cref{thm:KST} gives
    \[
    e(G) \le \frac{1}{2}(s-1)^{\frac{1}{s}}n^{2-\frac{1}{s}} + \frac{1}{2}(s-1)n \le \frac{1}{2}\bigl( cn^{\frac{1}{s}} \bigr) n^{2-\frac{1}{s}} + \frac{1}{2}(c^sn)n \le cn^2. \qedhere
    \]
\end{proof}

Finally, we include a proof of \Cref{prop:blowup} as promised. 

\begin{proof}[Proof of~\Cref{prop:blowup}]
    Write $t \eqdef \frac{s}{2h}$ and let $G_0$ be an $\frac{n}{t}$-vertex $H$-free graph on $\Omega \bigl( (\frac{n}{t})^{1+\alpha} \bigr)$ edges. Replace each vertex of $G_0$ by a $t$-clique (to be referred to as a \emph{blob}). Between any pair of blobs, 
    \vspace{-0.5em}
    \begin{itemize}
        \item put all $t^2$ edges if there is an edge between the original pair of vertices in $G_0$, and
        \vspace{-0.5em}
        \item put no edge if there is no edge between the original pair of vertices in $G_0$. 
    \end{itemize}
    \vspace{-0.5em}
    Denote by $G$ the resulting
    graph after this blowup process. Then $e(G) \ge t^2 e(G_0) = \Omega_h(s^{1-\alpha}n^{1+\alpha})$, and we are left to verify that $G$ is induced-$H$-free and $K_{s, s}$-free. 
    
    Suppose there is an induced copy of $H$ in $G$. Fix a pair of distinct vertices $u, v$ from this copy. If $u, v$ are from a same part of $H$, then they must live in different blobs of $G$ because each blob is a complete graph. If $u, v$ are from different parts of $H$, then the assumptions that $H$ is connected and $h\ge 3$ due to $\ex(n,H)$ being superlinear show that they have a \emph{distinguisher} $w$ which is adjacent to exactly one of the vertices in $H$. Provided that $u, v$ live in a same blob of $G$, the distinguisher vertex $w$ does not exist in $G$. It follows that $u, v$ always live in different blobs of $G$. So, each vertex of the induced copy $H$ lives in a distinct blob of $G$, and hence $G_0$ contains a copy of $H$, a contradiction. 
    
    Suppose there is a copy of $K_{s, s}$ in $G$. Since each blob is of size $t = \frac{s}{2h}$, we can find a $K_{h, h}$-subgraph of the $K_{s, s}$ whose vertices come from pairwise distinct blobs through a standard greedy process. The existence of such a $K_{h, h}$ in $G$ shows that $G_0$ contains a copy of $K_{h, h}\supseteq H$, a contradiction. 
    
    The proof of \Cref{prop:blowup} is complete. 
\end{proof}

\section{Rational exponents of induced Turán numbers} \label{sec:rational}

A \emph{rooted tree} $(T; R)$ consists of a tree graph $T = (V, E)$ and a root set $R \subset V$ such that $T$ induces an independent set on $R$. For every proper subset $S \subset V \setminus R$ (possibly empty) and any positive integer $p$, we construct a disjoint union of sets $W_S \eqdef \bigcup_{v \in V} W_v$, where $|W_v| = 1$ if $v \in R \cup S$ and $|W_v| = p$ otherwise. Let $W_v = \{v^1, \dots, v^p\}$, where $v^1 = \dots = v^p = v$ if $v \in R \cup S$ and $v^1, \dots, v^p$ are distinct otherwise. The $(p; S)$-\emph{lift} of $(T; R)$ is the graph on vertex set $W_S$, where we put an edge between $u^i$ and $v^j$ if and only if $i = j$ and $uv$ forms an edge in the original tree $T$. In other words, the $(p; S)$-lift of $(T; R)$ is the graph obtained from gluing $p$ vertex-disjoint copies of $T$ along the set $R \cup S$. Denote by $\cF^p(T; R)$ the family (set) of possible $(p; S)$-lifts of $(T; R)$ as $S$ ranges over all proper subsets of $V \setminus R$. Hereafter we implicitly assume that every rooted tree has at least one root vertex and at least one non-root vertex. The \emph{density} of $(T; R)$ is defined as $\rho(T; R) \eqdef \frac{e(T)}{|V\setminus R|}$. 

\begin{center}
\begin{tikzpicture}[scale = 1.25]
    \clip (-3, -1.75) rectangle (3, 1.25);
    \draw[thick] (-2, 0) -- (2, 0);
    \foreach \x in {-2, 2}
        \draw[fill = black] (\x, 0) circle (0.1);
    \foreach \x in {-1, 0, 1}
        \draw[fill = white] (\x, 0) circle (0.1);
    \node at (-2, -0.25) {$a$};
    \node at (-1, -0.25) {$b$};
    \node at (0, -0.25) {$c$};
    \node at (1, -0.25) {$d$};
    \node at (2, -0.25) {$e$};
\end{tikzpicture}
\begin{tikzpicture}[scale = 1.25]
    \clip (-3, -1.75) rectangle (3, 1.25);
    \draw[thick] (-2, 0) -- (2, 0);
    \draw[thick] (-2, 0) -- (-1, 1) -- (0, 1) -- (1, 0);
    \draw[thick] (-2, 0) -- (-1, -1) -- (0, -1) -- (1, 0);
    \foreach \x in {-2, 2}
        \draw[fill = black] (\x, 0) circle (0.1);
    \foreach \x in {-1, 0}
        \foreach \y in {-1, 0, 1}
            \draw[fill = white] (\x, \y) circle (0.1);
    \draw[fill = white] (1, 0) circle (0.1);
    \node at (-2, -0.25) {$a$};
    \foreach \y in {1, 2, 3}
        \node at (-1, -\y+1.75) {$b^{\y}$}; 
    \foreach \y in {1, 2, 3}
        \node at (0, -\y+1.75) {$c^{\y}$}; 
    \node at (1, -0.25) {$d$};
    \node at (2, -0.25) {$e$};
\end{tikzpicture}
\begin{tikzpicture}
    \node at (0, 0) {\textbf{\hypertarget{figthree}{Figure 2:}} The $(p; S)$-lift of a rooted $4$-edge path where $R = \{a, e\}, \, p = 3$, and $S = \{d\}$.};
\end{tikzpicture}
\end{center}

To conclude \Cref{thm:ind_rational_exp+} using \Cref{prop:blowup}, the following upper bound suffices. 

\begin{theorem} \label{thm:roottree_upper}
    Let $t \ge 2, \, s \ge 2$ be integers and suppose $(T; R)$ is a $t$-vertex rooted tree. There exists $C = C(s, t, p) > 0$ such that every $n$-vertex $K_{s, s}$-free graph $G$ with $e(G) \ge C \cdot n^{2 - \frac{1}{\rho(T; R)}}$ contains an induced $F$-subgraph for some $F \in \cF^p(T; R)$ (i.e., $F$ is a $(p; S)$-lift of $(T; R)$ for some $S$). 
\end{theorem}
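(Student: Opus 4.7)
The plan is to follow the classical Bukh--Conlon supersaturation strategy, replacing each step with its ``induced'' analogue. First, I apply \Cref{lem:almost_reg} with $\alpha = 1 - \frac{1}{\rho(T; R)} = \frac{|R|-1}{t-1}$ to pass from $G$ to an induced $K$-almost-regular subgraph $H$ on $m$ vertices with average degree $d \ge \frac{C}{2} m^{\alpha}$, where $K = 2^{4/\alpha+2}$ depends only on $t, |R|$. For $C$ chosen large enough as a function of $s, t, p$, this meets the hypothesis $d \ge (4Kt)^{6s} s^3$ of \Cref{thm:tree_supersaturation}, which produces at least $N \ge m(d/(2K))^{t-1}$ labeled induced copies of $T$ in $H$. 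Since $\alpha(t-1) = |R| - 1$, pigeonholing over the at most $m^{|R|}$ possible root labelings $\mathbf{r}: R \hookrightarrow V(H)$ yields a fixed $\mathbf{r}$ that is extended to $M \ge (C/(4K))^{t-1}$ copies---a constant that can be made arbitrarily large in $s, t, p$ by choosing $C$ sufficiently large.

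Next, I extract a sunflower structure on the image sets $\phi(V \setminus R) \subseteq V(H) \setminus \mathbf{r}(R)$ of these $M$ copies. Each such set has size $t - |R|$; accounting for a factor of at most $t!$ from distinct labelings sharing a common underlying image, the Erd\H{o}s--Rado sunflower lemma (valid once $M/t! \ge (t-|R|)!(P-1)^{t-|R|}$) produces $P$ distinct image sets forming a sunflower with core $C^\ast \subseteq V(H)$ and pairwise disjoint petals. Picking one labeled copy per image set, and then pigeonholing over (i) the subset $S \subseteq V \setminus R$ each copy sends into $C^\ast$ and (ii) the bijection $S \to C^\ast$ each copy realizes, I obtain $P' \ge P/(2^{t-|R|}(t-|R|)!)$ copies $\phi_1, \ldots, \phi_{P'}$ that all agree on $R \cup S$ and whose petals $V_i \eqdef \phi_i(V \setminus (R \cup S))$ are pairwise disjoint. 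Since distinct sets in the sunflower have non-empty petals, $S \subsetneq V \setminus R$, making $S$ an admissible choice in the definition of $\cF^p(T; R)$.

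The final and central step, where $K_{s,s}$-freeness enters decisively, is to prune $\{\phi_1, \ldots, \phi_{P'}\}$ to $p$ copies whose petals are pairwise \emph{non-adjacent} in $H$; the resulting configuration, glued along $\mathbf{r}(R) \cup \phi_1(S)$, is then an induced copy of the $(p; S)$-lift $F \in \cF^p(T; R)$. To this end, form the auxiliary graph $\Gamma$ on $\{1, \dots, P'\}$ by joining $i \sim j$ whenever $H$ has an edge between $V_i$ and $V_j$. Applying \Cref{thm:KST} to the induced subgraph $H[V_1 \cup \cdots \cup V_{P'}]$ on at most $tP'$ vertices yields at most $O_{s,t}\bigl((P')^{2-1/s}\bigr)$ edges there, so $\Gamma$ has average degree $O_{s,t}\bigl((P')^{1-1/s}\bigr)$, and Tur\'an's theorem furnishes an independent set in $\Gamma$ of size $\Omega_{s,t}\bigl((P')^{1/s}\bigr)$, which exceeds $p$ once $P'$ is at least a suitable polynomial in $p$ with constants depending on $s, t$. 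The conceptual novelty compared to the non-induced Bukh--Conlon proof is exactly this last KST-plus-Tur\'an step, which turns the $K_{s,s}$-free hypothesis into the ``no extra edges'' condition needed to upgrade the lift from a subgraph to an induced subgraph. The principal obstacle is then the purely quantitative bookkeeping: propagating the requisite lower bound on $C$ through the chain tree supersaturation $\to$ root pigeonhole $\to$ sunflower $\to$ labeling pigeonhole $\to$ Tur\'an, which amounts to a routine (if tedious) chase of constants depending only on $s, t, p$.
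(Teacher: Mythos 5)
Your proposal is correct and follows the same architecture as the paper's proof: regularize via \Cref{lem:almost_reg}, apply the induced tree supersaturation (\Cref{thm:tree_supersaturation}), pigeonhole over root placements to get a large constant number of induced copies with a common root labeling, clean up their mutual intersections, and finally use K\H{o}v\'{a}ri--S\'{o}s--Tur\'{a}n plus Tur\'{a}n's theorem on an auxiliary graph to select $p$ copies with no edges between petals, which assemble into an induced $(p;S)$-lift. The one genuine divergence is the intersection-cleaning step: the paper proves a bespoke selection lemma (\Cref{lem:selection}, by induction on the number of positions, with the explicit bound $(t!)^2 q^{t+1}$) guaranteeing that each tree vertex is mapped either to a common vertex or to pairwise distinct vertices with disjoint position classes, whereas you invoke the Erd\H{o}s--Rado sunflower lemma on the image sets of the non-root vertices and then pigeonhole over which subset $S$ lands in the core and via which bijection. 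Your route buys an off-the-shelf tool in place of an ad hoc lemma, at the price of the extra factor-of-$t!$ bookkeeping for labelings sharing an image set and the two additional pigeonhole steps; both give constants depending only on $s,t,p$, and you correctly handle the two points where the substitution could go wrong (non-empty petals forcing $S \subsetneq V \setminus R$, and disjoint petals together with injectivity of each embedding ensuring the union really is the lift, so that only petal--petal edges need to be excluded in the final KST--Tur\'{a}n pruning, exactly as in the paper). Your observation that $d \ge \tfrac{C}{2} m^{\alpha} \ge \tfrac{C}{2}$ already meets the degree hypothesis of \Cref{thm:tree_supersaturation} is a slightly cleaner way to dispose of that condition than the paper's remark that $m$ grows with $n$.
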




We shall first deduce \Cref{thm:roottree_upper} from \Cref{thm:tree_supersaturation}, and then prove \Cref{thm:tree_supersaturation}. 

\begin{center}
\begin{tikzpicture}[scale = 1.25]
    \clip (-3, -1.25) rectangle (3, 1.25);
    \draw[thick] (-2, 0) -- (2, 0);
    \foreach \x in {-2, 2}
        \draw[fill = black] (\x, 0) circle (0.1);
    \foreach \x in {-1, 0, 1}
        \draw[fill = white] (\x, 0) circle (0.1);
    \node at (-2, -0.25) {$a$};
    \node at (-1, -0.25) {$b$};
    \node at (0, -0.25) {$c$};
    \node at (1, -0.25) {$d$};
    \node at (2, -0.25) {$e$};
\end{tikzpicture}
\begin{tikzpicture}[scale = 1.25]
    \clip (-3, -1.25) rectangle (3, 1.25);
    \draw[thick] (-2, 0) -- (2, 0);
    \draw[thick] (-2, 0) -- (-1, 1) -- (0, 1) -- (1, 0);
    \draw[thick] (-1, 0) arc[start angle=180, end angle=360, radius=1];
    \draw[thick] (0, 0) arc[start angle=180, end angle=360, radius=1];
    \foreach \x in {-2, 2}
        \draw[fill = black] (\x, 0) circle (0.1);
    \foreach \x in {-1, 0}
        \foreach \y in {0, 1}
            \draw[fill = white] (\x, \y) circle (0.1);
    \draw[fill = white] (1, 0) circle (0.1);
    \node at (-1, 0.75) {$p$};
    \node at (0, 0.75) {$q$};
    \node at (-2, -0.25) {$a$};
    \node at (-1.1, -0.25) {$b$};
    \node at (-0.1, -0.25) {$c$};
    \node at (1.1, -0.25) {$d$};
    \node at (2.1, -0.25) {$e$};
\end{tikzpicture}
\begin{tikzpicture}
    \node at (0, 0) {\textbf{\hypertarget{figfour}{Figure 3:}} An illustration of a graph living in $\cG^3 \setminus \cF^3$.};
\end{tikzpicture}
\end{center}

\begin{remark}\label{rmk:family}
It is worth mentioning that our $\cF^p(T; R)$ is smaller than the Bukh--Conlon family $\cG^p(T; R)$ (see \cite[Definition 1.2]{bukh_conlon}, denoted as $\cT_R^p$), which they refer to as the $p$-th power of a fixed rooted tree $(T; R)$. The family $\cG^p(T; R)$ consists of all possible unions of $p$ distinct labeled copies of $T$ which agree on $R$. Omit $(T; R)$ when there is no confusion. For the rooted $5$-path in \hyperlink{figthree}{Figure~2}, the graph in the right of \hyperlink{figfour}{Figure~3} lives in $\cG^3$ (since $apqde, \, abcde, \, abdce$ in these orders are three distinct labeled copies of the $5$-path agreeing on $R$) but not in $\cF^3$. For every rooted tree $(T; R)$, the definition implies $\cF^p \subseteq \cG^p$. In fact, the latter is significantly larger than the former, in the sense that $|\cG^p|$ grows with $p$ but $|\cF^p|$ does not. Indeed, suppose the number of non-root vertices in $(T; R)$ is $a$. Since the lifting set $S$ is a subset of $V(T) \setminus R$, we have $|\cF^p| \le 2^a$. For any $\frac{a}{b} \in (0, 1)$ with $a, b \in \N_+$, we can use the rooted trees $T_{a, b}$ in \cite[Section 1]{bukh_conlon} of density $\frac{b}{a}$ and $a$ non-root vertices to get the bound on $|\cH|$ in~\Cref{thm:ind_rational_exp+}. 
\end{remark}

\subsection{Proof of \texorpdfstring{\Cref{thm:roottree_upper}}{Theorem 3.1} assuming \texorpdfstring{\Cref{thm:tree_supersaturation}}{Theorem 3.2}}

Denote by $x, y$ the number of non-root vertices and edges in $(T; R)$, respectively. Write $\alpha \eqdef 1 - \frac{x}{y}$. Then $2 - \frac{1}{\rho(T; R)} = 1 + \alpha$, and hence $e(G) \ge C \cdot n^{1+\alpha}$ for some sufficiently large $C$ in terms of $s$ and $t, p$. Thanks to \Cref{lem:almost_reg}, we may assume that $G$ is $K$-almost-regular, where $K \eqdef 2^{\frac{4}{\alpha}+2}$. Notice that the estimate ``$m \ge \frac{C^{\frac{\alpha+4}{2\alpha+4}}}{K} \cdot n^{\frac{\alpha}{2\alpha+4}}$'' in \Cref{lem:almost_reg} shows that we can still assume that $n \to \infty$. In particular, we do not need to worry about the restriction ``$d \ge (4Kt)^{6s} s^3$'' in \Cref{thm:tree_supersaturation}. 

In $T$, the number of non-root vertices $x$ and edges $y$ satisfy $t = y + 1, \, \alpha = 1 - \frac{x}{y}$. Due to the pigeonhole principle, \Cref{thm:tree_supersaturation} shows that there exists $U \subset V(G)$ with $|U| = |R|$ emanating
\[
\frac{n(\frac{d}{2K})^{t-1}}{n(n-1) \cdots (n-t+x+1)} \ge \Bigl( \frac{C}{4K} \Bigr)^{t-1} \cdot \frac{n^{1+\alpha(t-1)}}{n^{t-x}} = \Bigl( \frac{C}{4K} \Bigr)^{t-1} \cdot \frac{n^{1+(1-\frac{x}{y})y}}{n^{y-x+1}} \ge \frac{C}{4K}
\]
induced copies of $(T; R)$ under a fixed orientation on $U$ (i.e., the mapping from $R$ to $U$ is also fixed). 

Up to now, we have embedded $r \eqdef \frac{C}{4K}$ copies of $(T; R)$, all of whose roots on $U$ share a certain orientation. Every copy is induced, yet the interplay between two copies can be messy---any possible intersections are allowed, and the union of $r$ copies lives in $\cT^{r}$. We clean this up in two phases: 
\vspace{-0.5em}
\begin{itemize}
    \item Firstly, using Ramsey argument, among $r$ induced copies we are going to find $q$ of them whose intersections are ``regular''---each vertex of $T$ is mapped to either a same vertex or $q$ distinct vertices, and the union of $q$ copies lives in $\cF^q(T; R)$. 
    \vspace{-0.5em}
    \item Secondly, due to the $K_{s, s}$-free property, in $q$ induced copies with regular intersection we can find $p$ among which no redundant edges go between any pair of them. These $p$ copies of them form an induced copy of a graph in $\cF^p(T; R)$, finishing the proof. 
\end{itemize}

The following lemma carryies out the Ramsey argument in the first phrase.

\begin{lemma} \label{lem:selection}
    Let $V$ be a finite set. Let $t, q \in \N_+$ and write $N = N(t, q) \eqdef (t!)^2 \cdot q^{t+1}$. Among any vectors $\vx_1, \dots, \vx_N \in V^t$, each of distinct entries, there are $\vy_1, \dots, \vy_q$ such that the followings hold.
    \vspace{-0.5em}
    \begin{itemize}
        \item For $j = 1, \dots, t$, the entries $(\vy_1)_j, \dots, (\vy_q)_j$ are either all the same or pairwise distinct. 
        \vspace{-0.5em}
        \item The sets $Y_1, \dots, Y_t$ are pairwise disjoint, where $Y_j \eqdef \bigl\{(\vy_1)_j, \dots, (\vy_q)_j\bigr\}$ for $j = 1, \dots, t$. 
    \end{itemize}
    \vspace{-0.5em}
\end{lemma}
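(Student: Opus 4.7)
My plan is to induct on $t$, with the identity $(t!)^2 q^{t+1} = t^2 q \cdot ((t-1)!)^2 q^t$ driving the recursion; writing $N' \eqdef ((t-1)!)^2 q^t$, one has $N = t^2 q \cdot N'$. The base case $t = 1$ is bare pigeonhole on $q^2$ elements of $V$: either some value occurs at least $q$ times (giving $q$ selections that are all equal), or every value occurs at most $q-1$ times so that there are more than $q$ distinct values (giving $q$ selections that are pairwise distinct); condition (ii) is vacuous when $t = 1$.

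For the inductive step I would dichotomize based on whether some coordinate carries a ``frequent'' value. In Case~A, suppose some $j_0 \in [t]$ and $v \in V$ satisfy $|\{i : (\vx_i)_{j_0} = v\}| \ge N'$. I would restrict to those vectors and project away the $j_0$-th coordinate; the projections lie in $(V \setminus \{v\})^{t-1}$, still with pairwise distinct entries, so the inductive hypothesis furnishes $q$ projections $\vy_1', \dots, \vy_q'$ satisfying both conclusions in dimension $t-1$. Reinserting $v$ at coordinate $j_0$ produces $\vy_r$ with $Y_{j_0} = \{v\}$, and the disjointness $Y_{j_0} \cap Y_j = \emptyset$ for $j \ne j_0$ is automatic since no $\vy_r'$ has $v$ as an entry.

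In Case~B, every $(j, v)$ has multiplicity strictly below $N'$, and I would greedily extract $q$ vectors whose value sets $V(\vy_r) \eqdef \{(\vy_r)_1, \dots, (\vy_r)_t\}$ are pairwise disjoint. Any $u \in V$ lies in at most $\sum_{j} |\{i : (\vx_i)_j = u\}| < tN'$ input vectors (one contribution per coordinate, by distinct entries plus the Case~B bound), so after choosing $\vy_1, \dots, \vy_i$ the forbidden set has size at most $|V(\vy_1) \cup \dots \cup V(\vy_i)| \cdot tN' \le it^2 N'$, and the $q$-th pick succeeds because $(q-1) t^2 N' < t^2 q \cdot N' = N$. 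Pairwise disjoint value sets instantly yield both conclusions: every coordinate becomes ``all distinct,'' and any collision $(\vy_r)_j = (\vy_{r'})_{j'}$ with $j \ne j'$ would force $V(\vy_r) \cap V(\vy_{r'}) \ne \emptyset$ if $r \ne r'$, or contradict distinct entries of $\vy_r$ if $r = r'$.

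The subtle point I foresee is calibrating $N'$ to balance the two cases: it must be large enough for the induction in Case~A to apply directly to the restricted sub-collection, yet small enough that the Case~B greedy budget $(q-1) t^2 N'$ stays strictly below $N$. The choice $N' = ((t-1)!)^2 q^t$ threads both constraints, and since each recursion step multiplies by exactly $t^2 q$, the bound composes to $(t!)^2 q^{t+1}$ as claimed.
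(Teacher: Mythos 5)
Your proof is correct and follows essentially the same route as the paper's: induction on $t$ with the dichotomy between a coordinate carrying a value of multiplicity at least $((t-1)!)^2q^t$ (restrict, drop that coordinate, apply the inductive hypothesis, and use distinctness of entries to keep that value out of the other coordinates) versus all multiplicities small (greedily extract $q$ vectors with pairwise disjoint entry sets, using the budget $(q-1)t^2N' < N$). The bookkeeping in both cases matches the paper's argument, so there is nothing to add.
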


\begin{proof}
    For an arbitrarily fixed integer $q$, we prove by induction on $t$. Denote by
    \[
    M_t \eqdef (t!)^2 \cdot q^{t+1} = N(t, q)
    \]
    for any positive integer $t$. Notice that we always think about $q$ as a constant. 
    
    When $t = 1$, each vector $\vx$ has a single entry. Among $q^2$ such vectors, either $q$ of them have the same entry or $q$ of them are pairwise distinct. These $q$ vectors (as $\vy_1, \dots, \vy_q$) fulfill the properties. 

    We assume that the statement has already been established for $t-1 \, (t \ge 2)$. Think about the vectors $\vx_1, \dots, \vx_N \in V^t$ as being indexed by $[t]$. We then refer to each $\alpha \in [t]$ as a \emph{position}. 
    \vspace{-0.5em}
    \begin{itemize}
        \item If there exists a position $\alpha_0 \in [t]$ such that some $v_0 \in V$ appears at least $M_{t-1}$ times among $(\vx_1)_{\alpha_0}, \dots, (\vx_N)_{\alpha_0}$, then we remove all the vectors whose $\alpha_0$-th entry is not $v_0$. Since each $\vx$ has distinct entries, no $v_0$ appears once we forget about the $\alpha_0$ position of each (remaining) vector. The inductive hypothesis on $t-1$ then implies the existence of $\vy_1, \dots, \vy_q$, as desired. 
        \vspace{-0.75em}
        \item Otherwise, any entry appears no more than $M_{t-1} - 1$ times in each position. Fix $\vy_1 \eqdef \vx_1$ and delete all vectors sharing at least one entry (regardless of its position) with $\vx_1$. Other than $\vx_1$, we are left with at least
        \[
        N - t^2(M_{t-1} - 1) - 1 \ge N - t^2M_{t-1}
        \] 
        many vectors. Fix an arbitrary remaining vector as $\vy_2$ and delete all vectors sharing at least one entry with it. Again we are deleting at most $t^2(M_{t-1} - 1)$ vectors, and hence left with at least $N - 2t^2M_{t-1}$ many vectors other than $\vy_1, \vy_2$. Since $N = M_t = qt^2 \cdot M_{t-1}$, we can repeat this ``fixing and deleting'' process to obtain $\vy_1, \dots, \vy_q$ whose entries are pairwise disjoint. 
    \end{itemize}
    \vspace{-0.5em}
    The casework above finishes the inductive step, and so the proof is complete. 
\end{proof}

When $r \ge (t!)^2 \cdot q^{t+1}$, according to \Cref{lem:selection}, we can find $q$ induced copies $T_1, \dots, T_q$ of $(T; R)$ on root set $U \subset V(G)$ under a fixed orientation in $G$. Moreover, if we denote by $\varphi_i$ the embeddings from $T$ to $T_i$ for $i = 1, \dots, q$, then the non-root vertices $w$ of $T$ have the following properties: 
\vspace{-0.5em}
\begin{itemize}
    \item either $\varphi_1(w) = \dots = \varphi_q(w)$ or $\varphi_1(w), \dots, \varphi_q(w)$ are pairwise distinct; and
    \vspace{-0.5em}
    \item for $w \in V(T) \setminus R$, the sets $\bigl\{ \varphi_1(w), \dots, \varphi_q(w) \bigr\}$ are pairwise disjoint. 
\end{itemize}
\vspace{-0.5em}
Write $W \eqdef \bigcup\limits_{i=1}^q V(T_i)$ with $|W| \le qt$. Then from the K\H{o}v\'{a}ri--S\'{o}s--Tur\'{a}n theorem (\Cref{thm:KST}) we deduce that the $K_{s, s}$-free induced subgraph $G[W]$ satisfies $e(G[W]) \le s^{\frac{1}{s}} (qt)^{2-\frac{1}{s}}$. Denote
\[
S \eqdef \bigl\{ w \in V(T) : \varphi_1(w) = \dots = \varphi_q(w) \bigr\}. 
\]

Consider an auxiliary graph $H$ on vertex set $[q]$, where we put an edge between $i, j$ if and only if the union of $T_i, T_j$ is not an induced subgraph of $G$ (i.e., there is some other edge in between). We are supposed to prove $\alpha(H) \ge p$, for an independent set of size $p$ corresponds to an induced copy of a $(p; S)$-lift of $(T; R)$. If $\alpha(H) < p$, then Tur\'{a}n's theorem implies that
\[
s^{\frac{1}{s}} (qt)^{2-\frac{1}{s}} \ge e(G[W]) \ge e(H) > \frac{q^2}{2p} \implies q < (2p)^s s t^{2s-1}. 
\]
So, we have $\alpha(H) \ge p$ provided that $q \eqdef (2p)^s s t^{2s-1}$. Therefore, upon choosing $C = C(s, t, p)$ such that $\frac{C}{4K} = r \eqdef (t!)^2 \cdot q^{t+1}$ (required in \Cref{lem:selection}) we conclude the proof of \Cref{thm:roottree_upper}. 

\subsection{Proof of \texorpdfstring{\Cref{thm:tree_supersaturation}}{Theorem 3.2}}

We shall embed an induced copy of $T$ into $G$ greedily. The point is to expose $T$ vertex by vertex as $v_1, \dots, v_t$ so that the latest embedded vertex $v_k$ is always a leaf in the induced subtree $T[v_1, \dots, v_k]$. Since $G$ is $K$-almost-regular, we are able to find lots of candidates for each $v_k$. To guarantee an induced copy, the key is to avoid embedding $v_k$ into a \emph{bad neighbor} of $v_{k'}$, the unique neighbor of $v_k$ in $T[v_1, \dots, v_k]$, whose codegree with some of the previously embedded vertices $v_1, \dots, v_{k-1}$ is large. Thanks to the $K_{s, s}$-free property of $G$, the number of such bad neighbors is small. 

With hindsight, we choose parameter $\beta \eqdef 1 - \frac{1}{3s}$. For each vertex $v \in V$, set
\[
X(v) \eqdef \bigl\{u \in V \setminus \{v\} : \deg(u, v) \ge d^{\beta}\bigr\}. 
\]
Recall that the average degree of $G$ is $d = \frac{2e(G)}{n} \ge (4Kt)^{6s} s^3$. 

\begin{claim} \label{claim:X_set}
    For every vertex $v \in V$, we have $|X(v)| \le d^{\beta}$. 
\end{claim}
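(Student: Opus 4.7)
The plan is to establish \Cref{claim:X_set} by a standard K\H{o}v\'{a}ri--S\'{o}s--Tur\'{a}n-style double counting argument. Assuming for contradiction that $|X(v)| > d^{\beta}$, I would count pairs $(u, S)$ with $u \in X(v)$ and $S$ an $s$-subset of $N(u) \cap N(v)$.

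The lower bound is immediate from the definition of $X(v)$: each $u \in X(v)$ has $|N(u) \cap N(v)| \ge d^{\beta}$, so it contributes at least $\binom{d^\beta}{s}$ such pairs. For the upper bound, the $K_{s,s}$-freeness of $G$ implies that every $S \in \binom{N(v)}{s}$ has at most $s-1$ common neighbors: any $s$ such vertices would be automatically disjoint from $S$ (since $u \notin N(u)$) and together with $S$ would form a $K_{s,s}$ in $G$. Comparing the two bounds yields
\[
|X(v)| \;\le\; (s-1)\cdot\frac{\binom{|N(v)|}{s}}{\binom{d^\beta}{s}}.
\]

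The rest is a routine calculation. The $K$-almost-regularity gives $|N(v)| \le \Delta(G) \le K\delta(G) \le Kd$, while the hypothesis $d \ge (4Kt)^{6s} s^3$ easily ensures $d^\beta \ge 2s$, so I can crudely estimate the ratio of binomials by $(2Kd/d^\beta)^s = (2K)^s d^{1/3}$, where the key identity $s(1-\beta) = \tfrac{1}{3}$ comes from the specific choice $\beta = 1 - \tfrac{1}{3s}$. This gives $|X(v)| \le (s-1)(2K)^s d^{1/3}$, and the desired bound $|X(v)| \le d^\beta$ reduces to $(s-1)(2K)^s \le d^{2/3 - 1/(3s)}$. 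Since $2/3 - 1/(3s) \ge 1/2$ for $s \ge 2$, this follows from the much weaker inequality $d \ge (s-1)^2(2K)^{2s}$, which is comfortably implied by the hypothesis.

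There is no serious obstacle here; the claim is essentially one application of K\H{o}v\'{a}ri--S\'{o}s--Tur\'{a}n to the bipartite graph between $X(v)$ and $N(v)$. The only delicate point is the calibration $\beta = 1 - 1/(3s)$, chosen so that the slack $d^{1-\beta - 1/3} = d^{1/6}$ comfortably absorbs the constants $(s-1)(2K)^s$. Presumably this value of $\beta$ is tuned with an eye toward the subsequent greedy embedding of $T$: at each of the $t$ steps, the ``bad set'' of type $X(\cdot)$ excluded from the candidate neighborhood of size $\Theta(d)$ must remain a negligible fraction, and $d^\beta$ is the threshold that makes this work.
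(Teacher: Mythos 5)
Your argument is correct and takes essentially the same approach as the paper: both exploit $K_{s,s}$-freeness of the bipartite structure between $X(v)$ and $N(v)$ (with $|N(v)| \le Kd$ from almost-regularity) and use the hypothesis $d \ge (4Kt)^{6s}s^3$ to absorb the constants against the slack $d^{\beta-1/3}$. The only difference is cosmetic: the paper invokes \Cref{thm:KST} on the induced subgraph $G[N(v) \cup B']$ with $B' \subseteq X(v)$, $|B'| = d^{\beta}$, and derives a contradiction, whereas you reprove the underlying K\H{o}v\'{a}ri--S\'{o}s--Tur\'{a}n double count directly (every $s$-subset of $N(v)$ has at most $s-1$ common neighbors), which incidentally sidesteps the paper's factor-$\tfrac12$ correction for the possible overlap of $N(v)$ and $X(v)$.
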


\begin{poc}
    Write $A \eqdef N(v)$ and $B \eqdef X(v)$. Suppose, to the contrary, that $|B| > d^{\beta}$ and fix a $d^{\beta}$-element subset $B' \subseteq B$. We consider the induced subgraph $G' \eqdef G[A \cup B']$. Notice that $A$ and $B'$ are not necessarily disjoint, and the definition of $X$ implies that $e(G') \ge |B'| \cdot \frac{d^{\beta}}{2} = \frac{d^{2\beta}}{2}$. 

    It follows from $G$ is $K$-almost-regular together with the fact $\delta(G) \le d$ that $|A| \le \Delta(G)\le Kd$. So, by applying the K\H{o}v\'{a}ri--S\'{o}s--Tur\'{a}n theorem (\Cref{thm:KST}) to $G'$, we obtain
    \[
    \frac{d^{2\beta}}{2} \le e(G') \le s^{\frac{1}{s}}(|A|+|B'|\bigr)^{2 - \frac{1}{s}} < s^{\frac{1}{s}} (2Kd)^{2-\frac{1}{s}} \implies d < 2^{3s} (2K)^{6s-3} s^3, 
    \]
    which contradicts the assumption $d \ge (4Kt)^{6s} s^3$. We conclude that $|X(v)| = |B| \le d^{\beta}$. 
\end{poc}

    Label the vertices of $T$ as $w_1, \dots, w_t$ such that the induced graph $T[w_1, \dots, w_k]$ is a tree in which $w_k$ is a leaf ($k = 1, \dots, t$). Such a labeling can be found by the breadth-first-search algorithm. 

    Execute the embedding algorithm as follows. Embed $w_1$ into an arbitrary vertex $v_1 \in V$. Given that $w_1, \dots, w_{k-1} \, (2 \le k \le t)$ have already been embedded into $v_1,v_2,\ldots,v_{k-1}$, respectively and $w_{k'}$ is the unique neighbor of $w_k$ in $T[w_1, \dots, w_{k-1}]$, we embed $w_k$ into an arbitrary vertex 
    \[
    v_k \in V_k \eqdef N(v_{k'}) \setminus \Biggl( \biggl( \bigcup_{i \in [k-1]} X(v_i) \biggr) \cup \biggl( \bigcup_{i \in [k-1] \setminus \{k'\}} N(v_i) \biggr) \Biggr). 
    \]
    Then $v_{\ell}v_k \, (\ell < k)$ is an edge if and only if $\ell = k'$. Hence, the algorithm outputs an induced $T$. 
    
    For $k = 2, \dots, t$, set $\overline{V}_k \eqdef N(v_{k'}) \setminus \Bigl( \bigcup\limits_{i \in [k-1]} X(v_i) \Bigr)$. Since $\delta(G) \ge \frac{\Delta(G)}{K} \ge \frac{d}{K}$, \Cref{claim:X_set} implies
    \[
    |\overline{V}_k| \ge |N(v_{k'})| - \sum_{i=1}^{k-1} |X(v_i)| \ge \frac{d}{K} - td^{\beta}. 
    \]
    We refer to a vertex $v \in \overline{V}_k$ as $\ell$-\emph{bad} if $v \in N(v_{\ell})$, where $\ell \in [k-1] \setminus \{k'\}$. If $v$ is $\ell$-bad, then both $vv_{k'}$ and $vv_{\ell}$ are edges of $G$. Write $\{a, b\} \eqdef \{k', \ell\}$ so that $a < b$. This implies that $v \in N(v_a, v_b)$ and $v_b \notin X(v_a)$. Crucially, we infer from this and the definition of $X$ and \Cref{claim:X_set} that the number of such $\ell$-bad vertices is upper bounded by $d^{\beta}$. Since $d \ge (4Kt)^{6s}$, the candidate set $V_k$ for $v_k$ satisfies
    \[
    |V_k| = |\overline{V}_k| - \Biggl| \bigcup_{\ell \in [k-1] \setminus \{k'\}} \{v \in \overline{V}_k : \text{$v$ is $\ell$-bad}\} \Biggr| \ge \Bigl(\frac{d}{K} - td^{\beta}\Bigr) - td^{\beta} \ge \frac{d}{2K}. 
    \]
    To sum up, there are $n$ choices for $v_1$ and $\frac{d}{2K}$ choices for each of $v_2, \dots, v_t$ in this order. Thus, the number of induced copies of $T$ in $G$ is lower bounded by $n(\frac{d}{2K})^{t-1}$, concluding the proof of~\Cref{thm:tree_supersaturation}. 

\section{Induced Turán numbers of theta graphs} \label{sec:theta}

This section is devoted to the proof of  \Cref{thm:ind_theta}. \Cref{prop:blowup} together with \cite[Theorem 1]{conlon} gives the lower bound in \Cref{thm:ind_theta}. The following implies the upper bound.

\begin{theorem} \label{thm:theta_upper}
     Let $\ell \ge 2, \, t \ge 2$, and $s \ge 2$ be integers. There exists $C = C(\ell, t) > 0$ such that every $n$-vertex $K_{s, s}$-free graph $G$ with $e(G) \ge C \cdot (\ell t)^{20s} \cdot n^{1+\frac{1}{\ell}}$ contains an induced $\Theta_\ell^{t}$-subgraph. 
\end{theorem}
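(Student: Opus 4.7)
The plan mirrors the proof of \Cref{thm:roottree_upper}: regularize the host, apply induced-path supersaturation, pigeonhole on endpoints, and extract an induced $\Theta_\ell^t$ via a Ramsey-style selection and a $K_{s,s}$-freeness cleanup.

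First, apply \Cref{lem:almost_reg} with $\alpha = 1/\ell$ to pass to a $K$-almost-regular induced subgraph on $m$ vertices, where $K = 2^{4\ell+2}$ and the average degree $d$ satisfies $d = \Omega_\ell\bigl((\ell t)^{20s} m^{1/\ell}\bigr)$. Since \Cref{lem:almost_reg} forces $m$ to grow polynomially with $n$, we may take $m$ as large as required so that the degree hypothesis of \Cref{thm:tree_supersaturation} is automatic. Apply \Cref{thm:tree_supersaturation} to the $(\ell+1)$-vertex path $P_\ell$ to produce at least $m(d/2K)^\ell$ labeled induced copies of $P_\ell$ in the host. Pigeonholing over the $\le m^2$ ordered endpoint pairs, some pair $(u,v)$ is realized as endpoints of $N \ge (d/2K)^\ell/m^2$ induced length-$\ell$ paths; the factor $(\ell t)^{20s}$ in $d$ ensures that $N$ is much larger than any $(\ell, t, s)$-polynomial we will need later.

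Next, encode each induced $u$-$v$ path by the ordered $(\ell-1)$-tuple of its internal vertices and apply \Cref{lem:selection} (with the tuple-length parameter equal to $\ell-1$ and the output size $q$ chosen suitably large) to extract $q$ such paths with a \emph{regular} intersection pattern: at each position the $q$ paths either all coincide or are pairwise distinct, and the positions of distinct type contribute pairwise disjoint vertex sets. If every position is of the all-distinct type, the $q$ selected paths are pairwise internally disjoint and their union contains $\Theta_\ell^q$ as a (not yet induced) subgraph. At this point, mimicking the concluding step of the proof of \Cref{thm:roottree_upper}, \Cref{thm:KST} bounds the number of edges inside the $O(q\ell)$-vertex union, and a Turán-type argument on the auxiliary bad-cross-edge graph extracts $t$ of the $q$ paths whose pairwise unions are induced --- these $t$ paths form an induced $\Theta_\ell^t$.

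The main obstacle is to rule out the all-same case of the regular pattern: a shared vertex $w$ at some position $i$ forces every selected path through $w$, so the paths cannot be internally disjoint as required, and one obtains a chain of smaller thetas instead of a single $\Theta_\ell^t$. The plan to handle this is to use $K_{s,s}$-freeness to upper-bound the number of induced $u$-$v$ paths of length $\ell$ passing through any fixed internal vertex: if too many of the $N$ paths shared a common internal vertex $w$ at a fixed position, then the almost-regular $K_{s,s}$-free neighborhood structure around $u, w, v$ would yield a $K_{s,s}$ subgraph, contradicting our hypothesis on $G$. Quantifying this bound and taking $N$ large enough --- which is precisely the origin of the $(\ell t)^{20s}$ factor in the statement of \Cref{thm:theta_upper} --- forces the Ramsey selection to land in the all-distinct case, completing the proof.
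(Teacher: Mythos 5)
Your first two steps (regularize via \Cref{lem:almost_reg}, apply \Cref{thm:tree_supersaturation} to the $\ell$-edge path, pigeonhole on endpoint pairs) are fine, modulo a small slip: the pigeonhole gives some pair $(u,v)$ with at least $(d/2K)^{\ell}/m$ induced paths, not $(d/2K)^{\ell}/m^{2}$ (the latter tends to $0$); the correct count is a large constant depending on $\ell,t,s$, which is what you need. The genuine gap is in your treatment of the ``all-same'' case. You claim that if too many of the induced $u$--$v$ paths of length $\ell$ pass through a common internal vertex $w$, then $K_{s,s}$-freeness is violated. This is false: the number of induced $\ell$-paths between a fixed pair through a fixed internal vertex is not bounded by any function of $s,\ell,t$, and can grow polynomially in $n$ even in $K_{2,2}$-free graphs. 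For instance, in a $C_4$-free incidence graph with degrees $\approx\sqrt{n}$, the paths $u\,w\,b\,c\,v$ through a fixed neighbour $w$ of $u$ number about $\deg(w)\approx\sqrt{n}$, since $b$ ranges over $N(w)$ and each $b$ contributes at most one $c\in N(b)\cap N(v)$; no $K_{s,s}$ arises. Consequently, after pigeonholing to a single pair $(u,v)$, nothing prevents essentially \emph{all} of your selected paths from sharing an internal vertex, \Cref{lem:selection} may legitimately return an all-same position, and your argument has no way to force the all-distinct case. Note that in \Cref{thm:roottree_upper} shared positions are harmless because the target family $\cF^p(T;R)$ contains all $(p;S)$-lifts; for $\Theta_\ell^t$ only $S=\varnothing$ is acceptable, which is exactly why that proof does not transfer as you propose.

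The paper circumvents this by never working pointwise at a pigeonholed pair. It first proves \Cref{thm:many_ind_cycle}: among \emph{all} homomorphic $2\ell$-cycles (equivalently, pairs of $\ell$-walks with common endpoints, summed over all endpoint pairs), the non-induced ones have proportion at most $1/(2t^2)$. This requires the degenerate-closed-walk counting of \Cref{lem:degenarate-C_2k} — which uses almost-regularity and the full $n^{1+1/\ell}$ edge density, not just $K_{s,s}$-freeness — together with the red--blue \Cref{lem:red-blue} and a $K_{s,s}$-freeness cleanup applied to the union of many internally disjoint paths (where sparsity genuinely bites, unlike at a single shared vertex). Then a weighted averaging over endpoint pairs, comparing $\sum_{(u,v)}|\cW_{u,v}|^2=\hom(C_{2\ell},G)$ with $\sum_{(u,v)}|\cX_{u,v}|/|\cW_{u,v}|^{t-2}$, locates a pair $(u_0,v_0)$ at which some $t$-tuple of walks is pairwise good, yielding the induced $\Theta_\ell^t$ directly, with no Ramsey selection step. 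To repair your argument you would need a substitute for this global control of walk pairs sharing internal vertices; the local $K_{s,s}$ claim cannot provide it.
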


For two graphs $H$ and $G$, denote by $\Hom(H, G)$ the set of homomorphisms from $H$ to $G$. We will systematically abuse a graph homomorphism and its homomorphic image, and so $\Hom(C_{2\ell}, G)$ consists of all $2\ell$-edge closed walks (or $2\ell$-closed-walks) in $G$. Write $\hom(H, G) \eqdef \lvert\Hom(H, G)\rvert$. 

Inspired by \cite[Theorem 1.4]{hunter_milojevic_sudakov_tomon}, the main step towards \Cref{thm:theta_upper} is a strengthening of known arguments showing that most closed $2\ell$-edge walks (or $2\ell$-walks) in $G$ are induced $2\ell$-cycles. 

\begin{theorem} \label{thm:many_ind_cycle}
    Let $t \ge 2$ and $G$ be an $n$-vertex $K$-almost-regular $K_{s, s}$-free graph with $K = 2^{4\ell+2}$. There exists some constant $C = C(\ell, t) > 0$ such that the following holds. If $e(G) > C \cdot (\ell t)^{20s} \cdot n^{1+\frac{1}{\ell}}$, then the proportion of induced $2\ell$-cycles in $\Hom(C_{2\ell}, G)$ is at least $1 - \frac{1}{2t^2}$. 
\end{theorem}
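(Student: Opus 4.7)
The plan is as follows. A homomorphism $\phi \in \Hom(C_{2\ell}, G)$ is an induced $2\ell$-cycle if and only if, for every pair $\{i,j\}$ of non-adjacent positions on $C_{2\ell}$, one has $\phi(i) \neq \phi(j)$ and $\phi(i)\phi(j) \notin E(G)$. Accordingly I will bound, for each non-adjacent pair $\{i,j\}$, the number of homomorphisms for which either $\phi(i) = \phi(j)$ (a \emph{coincidence}) or $\phi(i)\phi(j) \in E(G)$ (a \emph{chord}), and then take a union bound over the $O(\ell^2)$ such pairs.

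For the lower bound on $\hom(C_{2\ell}, G)$, I would invoke the power-mean inequality applied to the non-negative quantities $\lambda_i^2$, where $\lambda_1, \ldots, \lambda_n$ are the eigenvalues of the adjacency matrix $A$ of $G$:
\[
\hom(C_{2\ell}, G) \;=\; \operatorname{tr}(A^{2\ell}) \;=\; \sum_i \lambda_i^{2\ell} \;\geq\; \frac{\bigl(\sum_i \lambda_i^2\bigr)^{\ell}}{n^{\ell-1}} \;=\; \frac{(2e(G))^{\ell}}{n^{\ell-1}}.
\]
Substituting the hypothesis $e(G) > C(\ell t)^{20s} n^{1+1/\ell}$ then yields $\hom(C_{2\ell}, G) \geq (2C)^{\ell} (\ell t)^{20s\ell} \cdot n^2$.

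For the upper bound on bad homomorphisms, fix a pair $\{i,j\}$ of non-adjacent positions at cyclic distance $k \in \{2, \ldots, \ell\}$ on $C_{2\ell}$. A coincidence at $\{i,j\}$ corresponds exactly to a homomorphism into $G$ from the quotient graph $C_{2\ell}/(i \sim j)$ --- two cycles $C_k$ and $C_{2\ell-k}$ sharing a vertex --- while a chord at $\{i,j\}$ corresponds to a homomorphism into $G$ from $C_{2\ell}$ augmented by the edge $\{i,j\}$ --- a theta graph with three internally vertex-disjoint paths of lengths $1$, $k$, and $2\ell-k$ between two vertices. After choosing the images $u, v \in V(G)$ of the two ``branching'' vertices (or, in the coincidence case, the image of the shared vertex), each of these counts reduces to counting walks of prescribed lengths between fixed pairs of vertices in $G$. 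These walk-counts can in turn be controlled by iterated applications of the Kővári--Sós--Turán theorem (\Cref{thm:KST}) together with the $K$-almost-regularity: $K_{s,s}$-freeness bounds the number of common neighbors of $s$-sets, which through appropriate bipartite auxiliary graphs converts into polynomially-small-in-$s$ corrections to the naive walk counts.

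The main obstacle will be carrying out these walk-counting estimates cleanly enough that, once summed over the $O(\ell^2)$ choices of pair $\{i,j\}$ and the two failure modes, the aggregate bad count stays below $\hom(C_{2\ell}, G)/(2t^2)$. The generous $(\ell t)^{20s}$-factor built into the edge hypothesis is designed precisely to absorb the polynomial-in-$s$ loss intrinsic to KST-based arguments, while the almost-regularity ensures that all walk counts are tight up to a factor of $K^{O(\ell)}$ depending only on $\ell$. Once the comparison is in place, the theorem follows.
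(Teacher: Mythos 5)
Your overall frame (lower-bound $\hom(C_{2\ell},G)$, then bound the homomorphisms having a coincidence or a chord at some non-adjacent pair of positions) matches the paper's starting point, and your trace/power-mean lower bound is fine (the paper instead uses Sidorenko's property, $\hom(C_{2\ell},G)\ge d^{2\ell}$; either suffices). The coincidence count is essentially \Cref{lem:degenarate-C_2k}, which the paper imports. The genuine gap is the chord case. For a fixed non-adjacent pair of positions at cyclic distance $k$, the number of chorded homomorphisms equals $\sum_{uv\in E(G)} W_k(u,v)\,W_{2\ell-k}(u,v)$, where $W_m(u,v)$ counts $m$-walks between $u$ and $v$; you assert this can be controlled by ``iterated applications of KST together with almost-regularity'', but \Cref{thm:KST} only bounds edge counts and codegrees and gives no direct handle on these weighted walk sums. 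Nothing local rules out a small number of ``heavy'' pairs $(u,v)$ carrying an enormous number of $\ell$-walks whose interiors are joined by edges, and such pairs could a priori account for a constant fraction of $\operatorname{tr}(A^{2\ell})$; excluding this is essentially the theorem itself, so your plan defers the entire difficulty to an unproven step.

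The paper handles exactly this point by a global contradiction argument rather than a per-position union bound: assuming a $\frac{1}{2t^2}$-fraction of homomorphic $2\ell$-cycles are non-induced, it rotates each chorded walk so that the chord joins the interiors of its two $\ell$-walk halves, averages over the antipodal pair, and locates one pair $(u_*,v_*)$ with many $\ell$-walks, few interior-sharing pairs (via the degenerate bound), and a constant fraction of chorded pairs. It then applies the red--blue extraction lemma (\Cref{lem:red-blue}) to pass to $r$ internally disjoint paths between $u_*$ and $v_*$ among which an $\Omega(\veps/\ell)$-fraction of pairs still carry a chord; their union is a subgraph on $O(r\ell)$ vertices with $\Omega(\veps r^2/\ell)$ edges, contradicting $K_{s,s}$-freeness via \Cref{coro:KST} once $r$ is large in terms of $s,\ell,t$. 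Some step playing the role of this extraction-plus-density contradiction is what your proposal lacks; without it, the per-chord-position ``walk-counting estimates'' you hope for are not available.
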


Since even cycles are Sidorenko, \Cref{thm:many_ind_cycle} presents a supersaturation result of induced even cycles. We shall first deduce \Cref{thm:theta_upper} from \Cref{thm:many_ind_cycle}, and then prove \Cref{thm:many_ind_cycle}. 

\subsection{Proof of \texorpdfstring{\Cref{thm:theta_upper}}{Theorem 4.1} assuming \texorpdfstring{\Cref{thm:many_ind_cycle}}{Theorem 4.2}} \label{sec:theta_upper}

Thanks to \Cref{lem:almost_reg}, we may assume that $G$ is $K$-almost-regular, where $K \eqdef 2^{4\ell+2}$. Suppose $C$ is sufficiently large in $\ell, t$ and $n$ goes to infinity. (Specifically, $C > (\ell t)^{100\ell}$.) Write $f(s) \eqdef (\ell t)^{20s}$. Then $G$ is $K$-almost-regular and $K_{s, s}$-free with $e(G) \ge C f(s) \cdot n^{1+\frac{1}{\ell}}$ and $d \eqdef 2 e(G)/n \ge 2C f(s) \cdot n^{1/\ell}$. 

Observe that every homomorphic $2\ell$-cycle $W$ is a $2\ell$-closed-walk and vice versa. Denote
\begin{align*}
    C_{2\ell} &\eqdef \bigl(\{v_0, v_1, \dots, v_{2\ell-1}\}, \{v_0v_1, v_1v_2, \dots, v_{2\ell-1}v_0\}\bigr) \\
    \cZ &\eqdef \bigl\{ \varphi \in \Hom(C_{2\ell}, G) : \text{$\varphi(v_0), \varphi(v_1), \dots, \varphi(v_{2\ell-1})$ do not form an induced cycle in $G$} \bigr\}. 
\end{align*}
Since $\varphi(v_0) \varphi(v_1) \cdots \varphi(v_{2\ell-1})$ is a $2\ell$-closed-walk, \Cref{thm:many_ind_cycle} implies that $|\cZ| \le \frac{\hom(C_{2\ell}, G)}{2t^2}$. 

For any pair of (not necessarily distinct) vertices $u, v$ in $G$, we denote
\begin{align*}
    \cW_{u, v} &\eqdef \bigl\{ \text{$\ell$-walks between vertices $u$ and $v$} \bigr\}, \\
    \cX_{u, v} &\eqdef \bigl\{ (P_1, \dots, P_t) \in \cW_{u, v}^{t} : \text{the union of $P_i, P_j$ is not an induced $C_{2\ell}$ in $G$ for some $i \ne j$} \bigr\}, \\
    \cZ_{u, v} &\eqdef \bigl\{ \varphi \in \cZ : \varphi(v_0) = u, \, \varphi(v_\ell) = v \bigr\}. 
\end{align*}
Then
$|\cX_{u, v}| \le t^2|\cZ_{u,v}| |\cW_{u, v}|^{t-2}$. Upon setting $\Omega \eqdef \bigl\{ (u, v) \in V(G)^2 : \cW_{u, v} \ne \varnothing \bigr\}$ we have
\[
\sum_{(u, v) \in \Omega} \frac{|\cX_{u, v}|}{|\cW_{u, v}|^{t-2}} \le t^2 \sum_{(u, v) \in \Omega} |\cZ_{u, v}| = t^2|\cZ| < \hom(C_{2\ell}, G). 
\]
It follows that
\[
\sum_{(u, v) \in \Omega} |\cW_{u, v}|^2 = \sum_{(u, v) \in V(G)^2} |\cW_{u, v}|^2 =  \hom(C_{2\ell}, G) > \sum_{(u, v) \in \Omega} \frac{|\mathcal{X}_{u,v}|}{|\mathcal{W}_{u,v}|^{t-2}}. 
\]
So, there exists a pair of vertices $u_0, v_0$ with $|\cW_{u_0, v_0}|^2 > \frac{|\cX_{u_0, v_0}|}{|\cW_{u_0, v_0}|^{t-2}}$. This implies that $|\cW_{u_0, v_0}| > 0$ and $|\cX_{u_0, v_0}| < |\cW_{u_0, v_0}|^t$ (hence $u_0 \ne v_0$). Therefore, we can find $t$ paths $P_1, \dots, P_t$ between $u_0, v_0$, each pair of which forms an induced $2\ell$-cycle in $G$. We thus conclude that the union of $P_1, \dots, P_t$ is an induced $\Theta_{\ell}^t$-subgraph of $G$, finishing the proof of \Cref{thm:theta_upper}. 

\subsection{Proof of \texorpdfstring{\Cref{thm:many_ind_cycle}}{Theorem 4.2}} \label{sec:many_ind_cycle}

To be specific, we assume that $C > (\ell t)^{100\ell}$. For any homomorphic $2\ell$-cycle $W$ (or a $2\ell$-closed-walk) in $G$, call it \emph{degenerate} if $|V(W)| < 2\ell$. We begin with introducing the following tools. 

\begin{lemma}[{\cite[Lemma 4.3]{hunter_milojevic_sudakov_tomon}}] \label{lem:degenarate-C_2k}
    Let $G$ be an $n$-vertex $K$-almost-regular graph with $K = 2^{4\ell+2}$ and $f(s) = (\ell t)^{20s}$. If $e(G) \ge C f(s) \cdot n^{1 + \frac{1}{\ell}}$, then at most $\frac{2^{3\ell+10}}{\sqrt{Cf(s)}} \hom(C_{2\ell}, G)$ homomorphic $2\ell$-cycles in $G$ are degenerate. 
\end{lemma}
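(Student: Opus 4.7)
The plan is to bound the count of degenerate $2\ell$-closed walks against the Sidorenko lower bound on $\hom(C_{2\ell}, G)$. Writing $d = 2e(G)/n$ for the average degree, Sidorenko's inequality for even cycles gives $\hom(C_{2\ell}, G) \ge d^{2\ell}$, and the hypothesis yields $d \ge 2Cf(s) \cdot n^{1/\ell}$.

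For the upper bound on degenerate walks, observe that a walk $v_0 v_1 \cdots v_{2\ell-1} v_0$ is degenerate iff some $v_i = v_j$ with $i \ne j$. Rotating so that a repeated vertex sits at position $0$ (a map whose fibers have size at most $2\ell$), such a walk becomes the concatenation of a $k$-closed walk and a $(2\ell-k)$-closed walk at the same basepoint, for some $2 \le k \le 2\ell - 2$ (the extremes vanish since $G$ has no loops). Writing $W_{k,v} = (A^k)_{vv}$, one gets
\[
\#\{\text{degenerate walks}\} \le 2\ell \sum_{k=2}^{2\ell-2} \sum_v W_{k,v} W_{2\ell-k,v}.
\]

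To control each product I would invoke positive semidefiniteness of $A^2$. Both the pointwise map $j \mapsto (A^{2j})_{vv}$ and the trace $j \mapsto \operatorname{tr}(A^{2j})$ are positive sums of exponentials in $j$ (by spectral decomposition), hence log-convex. The pointwise log-convexity implies $W_{2j,v} W_{2\ell-2j,v}$ is maximized over $1 \le j \le \ell-1$ at the endpoints $j \in \{1, \ell-1\}$; odd values of $k$ reduce to even via Cauchy--Schwarz. Thus the dominant contribution is $\sum_v W_{2,v} W_{2\ell-2,v} \le \Delta(G) \cdot \operatorname{tr}(A^{2\ell-2}) \le Kd \cdot \operatorname{tr}(A^{2\ell-2})$ by $K$-almost-regularity. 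The trace log-convexity, applied at $j = 1$ and $j = \ell$, yields the interpolation
\[
\operatorname{tr}(A^{2\ell-2}) \le (nd)^{1/(\ell-1)} \cdot \hom(C_{2\ell}, G)^{(\ell-2)/(\ell-1)}.
\]
Combining, the ratio of degenerate walks to $\hom(C_{2\ell}, G)$ is at most
\[
O_\ell \Bigl( K \bigl( n/d^\ell \bigr)^{1/(\ell-1)} \Bigr) \le O_\ell \bigl( K \cdot (2Cf(s))^{-\ell/(\ell-1)} \bigr),
\]
which comfortably fits within $2^{3\ell+10}/\sqrt{Cf(s)}$ since $K = 2^{4\ell+2}$ and $\ell/(\ell-1) > 1/2$.

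The hard part is the bookkeeping in the decomposition step --- in particular the $2\ell$-rotation overcount and the odd-$k$ reduction via Cauchy--Schwarz --- together with the constant tracking that lands the advertised $1/\sqrt{Cf(s)}$ scaling. The conceptual engines (log-convexity of diagonal entries and of traces of even powers of a symmetric matrix, plus Sidorenko's inequality for even cycles) are standard, but assembling them to match the precise exponent in the conclusion is the delicate piece.
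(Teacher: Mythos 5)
The paper never proves this statement: it is imported verbatim (with constants renamed so that $Cf(s)$ plays the role of their constant) from Hunter--Milojevi\'{c}--Sudakov--Tomon, Lemma 4.3, so there is no in-paper argument to compare yours against. Judged on its own, your spectral proof is essentially sound and completable. The decomposition is right: rotating a repeated vertex to position $0$ is at most $2\ell$-to-one and, since $G$ has no loops, gives $\#\{\text{degenerate}\}\le 2\ell\sum_{k=2}^{2\ell-2}\sum_v (A^k)_{vv}(A^{2\ell-k})_{vv}$. The even case follows because $j\mapsto\log (A^{2j})_{vv}$ is convex, so the symmetric convex function $j\mapsto \log\bigl((A^{2j})_{vv}(A^{2\ell-2j})_{vv}\bigr)$ is maximized at $j\in\{1,\ell-1\}$; the odd case reduces to it via $(A^{k})_{vv}\le\sqrt{(A^{k-1})_{vv}(A^{k+1})_{vv}}$ and Cauchy--Schwarz over $v$. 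Together with $\sum_v \deg(v)(A^{2\ell-2})_{vv}\le Kd\operatorname{tr}(A^{2\ell-2})$, the trace interpolation $\operatorname{tr}(A^{2\ell-2})\le (nd)^{1/(\ell-1)}\hom(C_{2\ell},G)^{(\ell-2)/(\ell-1)}$, and $\hom(C_{2\ell},G)\ge d^{2\ell}$, one indeed gets a degenerate fraction at most $2\ell(2\ell-3)K\,(n/d^{\ell})^{1/(\ell-1)}\le 2\ell(2\ell-3)K\,(2Cf(s))^{-\ell/(\ell-1)}$.

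The one place you are too quick is the final sentence: the bound does not ``comfortably fit'' inside $2^{3\ell+10}/\sqrt{Cf(s)}$ on the strength of $K=2^{4\ell+2}$ and $\ell/(\ell-1)>1/2$ alone, because your prefactor $2\ell(2\ell-3)\cdot 2^{4\ell+2}\approx \ell^2 2^{4\ell+4}$ exceeds $2^{3\ell+10}$ once $\ell$ is large. You must spend the surplus power of $Cf(s)$: the comparison needs $\sqrt{Cf(s)}\ge \ell(2\ell-3)2^{\ell-8}$. This is automatic exactly when the conclusion is non-vacuous --- if $2^{3\ell+10}/\sqrt{Cf(s)}\ge 1$ the lemma is trivial, and otherwise $\sqrt{Cf(s)}\ge 2^{3\ell+10}\ge \ell(2\ell-3)2^{\ell-8}$ --- and it holds with room to spare in the paper's application, where $C>(\ell t)^{100\ell}$ and $f(s)=(\ell t)^{20s}$. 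So add that two-line case distinction (or an explicit largeness assumption on $Cf(s)$) and the argument is a complete, self-contained alternative to the cited proof.
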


\begin{lemma}[{\cite[Lemma 4.6]{hunter_milojevic_sudakov_tomon}}] \label{lem:red-blue}
    For every $k \in \N_+$ and every $\lambda \in (0, 1)$, if $\delta \in \bigl( 0, \frac{\lambda^{3/2}}{100k} \bigr)$ and integer $m > \lambda\delta^{-1}$, then the following holds. Let $G_R, G_B$ be a red and a blue graph on a common $m$-vertex set $V$. If $e(G_R) \le \delta m^2$ and $e(G_B) \ge \lambda m^2$, then there exists $S \subseteq V$ with $|S| \ge k$ such that the induced red and blue subgraphs satisfy that $e(G_R[S]) = 0$ and $e(G_B[S]) \ge \frac{\lambda}{2} |S|^2$. 
\end{lemma}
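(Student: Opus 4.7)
The plan is to prove this red–blue lemma by a probabilistic sampling-and-deletion argument. I would take a uniformly random subset $S_0 \subseteq V$ of some size $s = s(k,\lambda,\delta)$ (to be tuned), delete one endpoint from each red edge inside $S_0$ to form $S$, and then verify both $|S| \ge k$ and $e(G_B[S]) \ge (\lambda/2) \cdot |S|^2$. The condition $e(G_R[S]) = 0$ is automatic from the deletion step.

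By linearity of expectation,
\[
\E\bigl[e(G_R[S_0])\bigr] \le \frac{\binom{s}{2}}{\binom{m}{2}} \cdot e(G_R) \le s^2 \delta, \qquad \E\bigl[e(G_B[S_0])\bigr] \ge (1-o(1)) \cdot s^2 \lambda.
\]
Markov's inequality controls the red count from above with constant probability. For the blue count, a reverse-Markov argument using the trivial upper bound $e(G_B[S_0]) \le \binom{s}{2}$ shows that $e(G_B[S_0]) \ge \lambda s^2/2$ with probability at least $\lambda$. When $\lambda$ is small, I would upgrade this to constant probability via a Chebyshev-type estimate, bounding $\Var(e(G_B[S_0]))$ by $\E[e(G_B[S_0])] + O(s^3)$ (coming from blue edge pairs sharing a vertex). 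Together, these bounds yield an instance $S_0$ in which $e(G_R[S_0])$ is near its expectation while $e(G_B[S_0])$ is at least half of its expectation.

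Next, I would delete one endpoint of each red edge in $S_0$ to obtain $S$, so that $e(G_R[S]) = 0$ by construction. This removes at most $e(G_R[S_0])$ vertices, hence at most $s \cdot e(G_R[S_0])$ blue edges, giving $|S| \ge s - e(G_R[S_0])$ and $e(G_B[S]) \ge e(G_B[S_0]) - s \cdot e(G_R[S_0])$. Taking $s$ to be a carefully chosen constant multiple of $\lambda/\delta$ (feasible since $m > \lambda\delta^{-1}$ forces $s < m$) makes $s\delta$ a small constant times $\lambda$, so the deletion removes only a lower-order fraction of vertices and blue edges; the numerical constants in the hypothesis $\delta < \lambda^{3/2}/(100k)$ are tuned precisely to ensure $|S| \ge k$ and blue density $\ge \lambda/2$ after these losses.

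The main obstacle I anticipate is the simultaneous control of the red and blue tail events. A bare reverse-Markov bound on $e(G_B[S_0])$ yields only probability $\lambda$, too small to combine (by union bound) with the Markov bound on $e(G_R[S_0])$ in the small-$\lambda$ regime. The Chebyshev upgrade required to close this gap demands $\E[e(G_B[S_0])] = \Omega(1/\lambda)$, forcing $s = \Omega(1/\lambda)$; this is precisely the pressure that turns the naive condition $\delta \ll \lambda/k$ into the stronger $\delta < \lambda^{3/2}/(100k)$ in the hypothesis. Verifying that the chosen $s$ lies in the narrow window ``large enough for blue concentration, small enough for red sparsity, and large enough to yield $|S| \ge k$ after deletion'' is where the exponent $3/2$ becomes essential and is, I expect, the technical heart of the argument.
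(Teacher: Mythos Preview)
This lemma is quoted from \cite{hunter_milojevic_sudakov_tomon} and is not proved in the present paper (the only remark is that the explicit range of $\delta$ can be read off from the argument there), so there is no in-paper proof to compare against; I can only assess your sketch on its own merits.

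The outline is natural, but the numerics do not deliver the exponent $3/2$. Deleting one endpoint per red edge removes $r \le e(G_R[S_0])$ vertices and up to $rs$ blue edges; since each deleted vertex may carry blue degree as large as $s-1$ inside $S_0$, maintaining blue density $\ge \lambda/2$ in $S$ forces $r = O(\lambda s)$, hence (via Markov on the red count) $s = O(\lambda/\delta)$. On the other side, your variance estimate ``$\E + O(s^3)$'' is dominated by the $O(s^3)$ term: the number of blue cherries $\sum_v \binom{d_B(v)}{2}$ is generically of order $\lambda m^3$ (and your stated $O(s^3)$ corresponds to the cruder $m^3$), so Chebyshev yields constant-probability concentration of $e(G_B[S_0])$ only once $s = \Omega(1/\lambda)$ at best. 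Combining the two constraints gives $\delta = O(\lambda^2)$, not $O(\lambda^{3/2})$; your final-paragraph claim that the two pressures resolve to exactly $\lambda^{3/2}/(100k)$ does not check out arithmetically (indeed, your implicit assumption $\Var \approx \E$ would require $s \ll \lambda < 1$, which is impossible). For $k$ bounded and $\lambda \to 0$ the hypothesis allows $\delta$ of order $\lambda^{3/2}$, a regime your scheme does not reach. Closing this $\sqrt{\lambda}$ gap needs a further idea beyond sample-and-delete with a raw second moment --- for instance, some prior control on blue degrees so that the cherry count drops to $O(\lambda^2 m^3)$ and Chebyshev already bites at $s = \Omega(1/\sqrt{\lambda})$ --- and that missing ingredient is the real obstacle in your plan.
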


The constants in \Cref{lem:degenarate-C_2k} are slightly adjusted from the original version. For \Cref{lem:red-blue}, the specific range of $\delta$ comes from the proof of \cite[Lemma 4.6]{hunter_milojevic_sudakov_tomon}. 

\medskip

Set $\veps \eqdef \frac{1}{2t^2}$. Suppose to the contrary that at least $\veps$-fraction of homomorphic $2\ell$-cycles in $G$ are not induced $2\ell$-cycles. Write $\Gamma \eqdef \bigl\{ (u, v) \in V(G)^2 : u \ne v \bigr\}$. For any $(u, v) \in \Gamma$, we denote
\begin{align*}
    \cP_{u, v} &\eqdef \bigl\{ \text{$\ell$-edge paths from $u$ to $v$} \bigr\}, \\
    \cA_{u, v} &\eqdef \bigl\{ (P_1, P_2) \in \cP_{u, v}^2 : \text{there is some vertex shared by $V(P_1) \setminus \{u, v\}$ and $V(P_2) \setminus \{u, v\}$} \bigr\}, \\
    \cB_{u, v} &\eqdef \bigl\{ (P_1, P_2) \in \cP_{u, v}^2 : \text{there is some edge between $V(P_1) \setminus \{u, v\}$ and $V(P_2) \setminus \{u, v\}$} \bigr\}. 
\end{align*}
The indices ``$u, v$'' are ordered, and every path is a homomorphic path (hence the orientation matters). Note also that we can identify $(P_1, P_2) \in \cP_{u, v}^2$ with the homomorphic $2\ell$-cycle $u \, \overset{P_1}{\cdots} \, v \, \overset{P_2}{\cdots}$. Thus,
\[
\{ \text{non-degenerate homomorphic $2\ell$-cycles} \} \subseteq \bigcup_{(u, v) \in \Gamma} \cP_{u, v}^2 \subseteq \{ \text{homomorphic $2\ell$-cycles} \}. 
\]
Since $C$ is large in terms of $\ell, t$, \Cref{lem:degenarate-C_2k} shows that the proportion of degenerate $2\ell$-cycles among homomorphic $2\ell$-cycles in $G$ is upper bounded by $\frac{2^{3\ell+10}}{\sqrt{C f(s)}} \le \frac{\veps}{2}$. This implies that
\begin{equation} \label{eq:Puv_bound}
\hom(C_{2\ell}, G) \ge \sum_{(u, v) \in \Gamma} |\cP_{u, v}|^2 \ge \Bigl( 1 - \frac{\veps}{2} \Bigr) \hom(C_{2\ell}, G) \ge \frac{1}{2} \hom(C_{2\ell}, G). 
\end{equation}
For each $(u, v)\in\Gamma$, every element of $\cA_{u, v}$ corresponds to a degenerate homomorphic $2\ell$-cycle in $G$. Furthermore, every degenerate homomorphic $2\ell$-cycle is counted in at most one such set $\cA_{u, v}$. (A degenerate $2\ell$-cycle whose $1$-st and $(\ell+1)$-th vertices coincide is never counted.) Thus, 
\begin{equation} \label{eq:Auv_bound}
    \sum_{(u, v) \in \Gamma} |\cA_{u, v}| \le |\{\text{degenerate homomorphic $2\ell$-cycles}\}| \le \frac{2^{3\ell+10}}{\sqrt{Cf(s)}} \hom(C_{2\ell}, G). 
\end{equation}
Recall that at least $\veps$-fraction of homomorphic $2\ell$-cycles are non-induced. This then implies that at least $\frac{\veps}{2}$-fraction of homomorphic $2\ell$-cycles are neither degenerate nor induced. Such a cycle always has a chord, and at least one of its $2\ell$ cyclic shifts lives in the union of $\cB_{u, v}$. Therefore, 
\begin{equation} \label{eq:Buv_bound}
    \sum_{(u, v) \in \Gamma} |\mathcal{B}_{u, v}| \ge \frac{|\{ \text{non-degenerate non-induced homomorphic $2\ell$-cycles} \}|}{2\ell} \ge \frac{\varepsilon}{4\ell} \hom(C_{2\ell}, G). 
\end{equation} 

Sidorenko's property \cite{sidorenko} of $C_{2\ell}$ implies $\hom(C_{2\ell}, G) \ge d^{2\ell}$. Due to \eqref{eq:Puv_bound}, \eqref{eq:Auv_bound}, and \eqref{eq:Buv_bound}, we have that
\[
\frac{8\ell}{\veps} \sum_{(u, v) \in \Gamma}|\cB_{u, v}|-\frac{\sqrt{Cf(s)}}{2^{3\ell+10}} \sum_{(u, v) \in \Gamma} |\cA_{u, v}| \ge \hom(C_{2\ell}, G) \ge \frac{1}{2} d^{2\ell} + \frac{1}{2} \sum_{(u, v) \in \Gamma} |\cP_{u, v}|^2. 
\]
So, there exists a pair of distinct vertices $u_*, v_*$ such that
\[
|\cB_{u_*, v_*}| \ge \frac{\veps\sqrt{C f(s)}}{\ell \cdot 2^{3\ell+13}} \cdot | \cA_{u_*, v_*}| + \frac{\veps}{16\ell} \cdot \frac{d^{2\ell}}{n(n-1)} + \frac{\varepsilon}{16\ell} \cdot |\cP_{u_*, v_*}|^2. 
\]
The definition of $\cB$ implies $\cB_{u_*, v_*} \subseteq \cP_{u_*, v_*}^2$ hence $|\cB_{u_*, v_*}| \le |\cP_{u_*, v_*}|^2$. We thus obtain
\[
|\cA_{u_*, v_*}| \le \frac{\ell \cdot 2^{3\ell+13}}{\veps \sqrt{C f(s)}} \cdot |\cP_{u_*, v_*}|^2, \qquad |\cB_{u_*, v_*}| \ge \frac{\veps}{16\ell} \cdot |\cP_{u_*, v_*}|^2, \qquad |\cP_{u_*, v_*}| \ge \sqrt{\frac{\varepsilon}{16\ell}} \cdot \bigl( Cf(s) \bigr)^{\ell}. 
\]

Construct a red auxiliary graph $G_R$ and a blue auxiliary graph $G_B$ on the vertex set $\cP_{u_*, v_*}$. 
\vspace{-0.5em}
\begin{itemize}
    \item Include $P_1P_2$ as a red edge into $E(G_R)$ if $(P_1, P_2) \in \cA_{u_*, v_*}$. 
    \vspace{-0.5em}
    \item Include $P_1P_2$ as a blue edge into $E(G_B)$ if $(P_1, P_2) \in \cB_{u_*, v_*}$. 
\end{itemize}
\vspace{-0.5em}
Since $(P_1, P_2) \in \cA_{u_*, v_*}$ (resp.~$\cB_{u_*, v_*}$) if and only if $(P_2, P_1) \in \cA_{u_*, v_*}$ (resp.~$\cB_{u_*, v_*}$), the above edge sets are well-defined. Counting the total number of red and blue edges, we obtain
\[
e(G_R) \le |\cA_{u_*, v_*}| \le \frac{\ell \cdot 2^{3\ell+13}}{\veps \sqrt{C f(s)}} \cdot |\cP_{u_*, v_*}|^2, \qquad e(G_B) \ge \frac{|\cB_{u_*, v_*}|}{2} \ge \frac{\veps}{32\ell} \cdot |\cP_{u_*, v_*}|^2. 
\]

We are going to apply \Cref{lem:red-blue} to $G_R$ and $G_B$ with parameters 
\[
(k, \lambda, \delta, m) = \biggl( s \cdot (8 \ell t)^{3s}, \frac{\veps}{32\ell}, \frac{\ell \cdot 2^{3\ell+13}}{\veps\sqrt{C f(s)}}, |\cP_{u_*, v_*}| \biggr). 
\]
Before applying the lemma, we need to verify that both $\delta < \frac{\lambda^{3/2}}{100k}$ and $m > \lambda\delta^{-1}$ hold. These are true because $C$ is sufficiently large in $\ell, t$ and $f(s) = (\ell t)^{20s}$. Therefore, by \Cref{lem:red-blue} we can find internally disjoint paths $P_1, \dots, P_r$ with $r \ge k$ such that at least $\frac{\delta r^2}{2} = \frac{\veps r^2}{64\ell}$ pairs among them appear in $\cB_{u_*, v_*}$. Write $U \eqdef \bigcup_{i=1}^r V(P_i)$ and consider the induced subgraph $H \eqdef G[U]$. Observe that
\[
|U| = 2 + r(\ell-1) \in \bigl( r(\ell-1), r\ell \bigr), \qquad e(H) \ge \frac{1}{2} \cdot \frac{\veps r^2}{64\ell} = \frac{\veps r^2}{128\ell}. 
\]
Choosing parameter $c \eqdef \frac{\veps}{128\ell^3}$, it follows from $\veps = \frac{1}{2t^2}$ that 
\[
|U| > k(\ell-1) > (s-1) \cdot (256 \ell^3 t^2)^s = \frac{s-1}{c^s}. 
\]
Since $G$ (hence $H$) is $K_{s, s}$-free, \Cref{coro:KST} applied to $H$ implies that
\[
\frac{\veps r^2}{128\ell} \le e(H) \le c|U|^2 < c(r\ell)^2 = \frac{\veps r^2}{128\ell}, 
\]
a contradiction. The proof of \Cref{thm:many_ind_cycle} is complete.

\section{Induced Turán numbers of prism graphs} \label{sec:prism}

This section is devoted to the proof of \Cref{thm:ind_prism}. \Cref{prop:blowup} together with $\ex(n, C_4) = \Omega(n^{3/2})$ gives the lower bound in \Cref{thm:ind_prism}. The upper bound is implied by the following theorem.

\begin{theorem} \label{thm:prism_upper}
     Let $\ell \ge 10$ and $s \ge 2$ be integers. There exists $C = C(\ell) > 0$ such that every $n$-vertex $K_{s, s}$-free graph $G$ with $e(G) \ge C \cdot 6^s s^{20 \ell^2}\cdot n^{3/2}$ contains an induced $C^{\square}_{2\ell}$-subgraph. 
\end{theorem}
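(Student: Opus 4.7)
The plan is to mirror the strategy of \Cref{sec:theta} adapted to the cyclic structure of the prism. First, by \Cref{lem:almost_reg} applied with $\alpha = 1/2$ and $K = 2^{10}$, I may pass to a $K$-almost-regular induced $K_{s,s}$-free subgraph of average degree $d = \Theta(\sqrt{n})$. Next, invoking the $C_4$-supersaturation result (\Cref{thm:C4_supersaturation}) announced in the introduction, I may assume that a positive fraction of the $\Omega(d^4) = \Omega(n^2)$ homomorphic $4$-cycles in $G$ are induced.

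View $C^{\square}_{2\ell}$ as a cyclic arrangement of $2\ell$ matching edges $e_1, \ldots, e_{2\ell}$, where each consecutive pair $\{e_i, e_{i+1}\}$ (indices mod $2\ell$) spans an induced $C_4$ in $G$ and no edges exist between non-consecutive $e_i, e_j$. By double-counting induced $C_4$'s and pigeonholing, I first locate an edge $e_1$ contained in many induced $C_4$'s as a matching side (i.e., as a pair of antipodal vertices). From $e_1$, I would then grow two chains of matching edges $e_1, e_2, \ldots, e_{\ell+1}$ and $e_1, e_0, \ldots, e_{-\ell+1}$ in two "directions", iteratively selecting at each step a next matching edge that (i) spans an induced $C_4$ with the current one and (ii) avoids creating a chord to any previously fixed matching edge or vertex. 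Since the accumulated vertex set has size $O(\ell)$ throughout, the $K_{s,s}$-free property combined with \Cref{coro:KST} bounds the density of \emph{bad} extensions at each step.

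To close the cycle one needs $e_{\ell+1} = e_{-\ell+1}$. For this, I would use a double-counting argument in the spirit of \Cref{sec:many_ind_cycle}: counting all length-$(\ell,\ell)$ chain-pairs emanating from $e_1$ and using a Sidorenko-type lower bound to show they dominate the "non-closing" or "chord-creating" configurations. A red-blue coloring argument analogous to \Cref{lem:red-blue} then upgrades a collection of many closures into one where the two halves together induce a $C^{\square}_{2\ell}$ with no chord edges between them.

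The main obstacle is precisely this global closing step: unlike extending a linear chain (as in the theta-graph argument), a closed cycle requires the two independently-grown halves to coincide at their antipodes \emph{and} to be globally chord-free. Propagating chord-avoidance across $\ell$ rounds of extension, each contributing an $s^{O(\ell)}$ loss, accounts for the $s^{20\ell^2}$ factor in the theorem; the $6^s$ constant likely traces back to the $C_4$-supersaturation step, analogous to the $2^{O(\ell)}$ constant appearing in \Cref{lem:degenarate-C_2k}.
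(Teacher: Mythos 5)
There is a genuine gap: your sketch reproduces only the two ingredients that are easy (almost-regularization via \Cref{lem:almost_reg} and the induced-$C_4$ supersaturation of \Cref{thm:C4_supersaturation}), and the part you defer --- ``the global closing step'' --- is precisely the heart of the proof, for which you offer no workable mechanism. Growing two chains of matching edges greedily gives a single object with no control over where each chain ends, so nothing forces the two halves to meet at a common antipodal edge; and the quantitative fix you gesture at (counting chain-pairs from $e_1$ and a red-blue argument as in \Cref{lem:red-blue}) cannot be run in $G$ directly, because the number of configurations in which two chains collide or create a chord is governed by codegrees of diagonal pairs of the $4$-cycles used, and the supersaturation statement alone gives no bound on these. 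This is why the paper's proof begins with a dichotomy you are missing: an induced $4$-cycle $xyzw$ is \emph{thin} if both diagonal codegrees $\deg(x,z),\deg(y,w)$ are at most $Cg(s)d^{2/3}$, and \emph{thick} otherwise. If many induced $4$-cycles are thick, no cycle-counting is attempted at all; instead a dependent random choice argument inside a single thick ``book'' produces a set $X$ all of whose triples have huge common neighborhood, and the embedding lemma imported from Hunter--Milojevi\'{c}--Sudakov--Tomon (\Cref{lem:rich_to_prism}) finishes. Your chain-growing scheme has no counterpart for this case.

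In the complementary case (many thin induced $4$-cycles) the paper does carry out a counting argument in the spirit you describe, but in an auxiliary graph $\Gamma$ whose vertices are the edges of $G$ and whose edges are thin induced $4$-cycles, not in $G$ itself: Janzer's regularization and walk-counting lemmas (\Cref{lem:bi_regular,lem:hom_C2l}) give $\hom(C_{2\ell},\Gamma')\ge d^{19}$, the thinness bound feeds the relation $\sim$ into Janzer's degenerate-cycle lemma (\Cref{lem:degenerate_cycle}) to control degenerate homomorphic $2\ell$-cycles (\Cref{claim:degenerate_homo}), a rotation argument bounds the ``special'' ones, and a KST-based estimate (\Cref{coro:KST}, via \Cref{claim:rich_walk}) shows that pairs with many walks must produce degenerate or special cycles in bulk --- the three bounds together contradict the assumption that no induced $C_{2\ell}^{\square}$ exists. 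Note also that it is this degenerate-cycle step, not chord-avoidance losses of size $s^{O(\ell)}$ per round, that forces $\ell\ge 10$ and shapes the constants. A minor additional slip: from \Cref{thm:C4_supersaturation} you may conclude there are $\Omega(d^4)$ \emph{induced} $4$-cycles, but not that they form a positive fraction of all homomorphic $4$-cycles, since $\hom(C_4,G)=\sum_{u,v}\deg(u,v)^2$ can be much larger than $d^4$ even in an almost-regular $K_{s,s}$-free graph.
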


To prove \Cref{thm:prism_upper}, we need a supersaturation result on induced $4$-cycles. One can directly deduce such a result using \Cref{thm:many_ind_cycle} and the fact that even cycles satisfy the Sidorenko property. Nonetheless, we give a self-contained proof without relying on \Cref{lem:degenarate-C_2k,lem:red-blue}.

\begin{theorem} \label{thm:C4_supersaturation}
    Let $G$ be an $n$-vertex $K_{s,s}$-free graph. If its number of edges $e(G) > 6^s s \cdot n^{3/2}$, then the number of induced copies of $4$-cycles in $G$ is at least $\frac{e(G)^4}{16n^4}$. 
\end{theorem}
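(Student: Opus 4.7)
The strategy is to compare the total number of (possibly non-induced) $C_4$-copies in $G$ to those with a chord. Write $c(u,v) := |N(u) \cap N(v)|$ for the codegree. Each $C_4$ has exactly two ``diagonal'' pairs (pairs of non-adjacent vertices within the $C_4$), so the total count is $N := \frac{1}{2}\sum_{\{u,v\}: u\ne v}\binom{c(u,v)}{2}$. A $C_4$ is non-induced iff at least one of its diagonals is an edge, so the non-induced count is at most $B := \sum_{uv \in E(G)}\binom{c(u,v)}{2}$. The induced-$C_4$ count is therefore bounded below by $N - B$, and the plan is to show $N - B \ge e(G)^4/(16 n^4)$ via opposing estimates on $N$ and $B$.

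To lower bound $N$, I apply convexity twice. First, by convexity of $\binom{x}{2}$ on degrees, $\sum c(u,v) = \sum_w \binom{\deg(w)}{2} \ge e(G)^2/n - e(G)$, which under the hypothesis $e(G) > 6^s s \, n^{3/2}$ comfortably exceeds $5 n^2$. Then Cauchy--Schwarz gives $\sum c(u,v)^2 \ge (\sum c)^2/\binom{n}{2} \ge 2 e(G)^4/n^4$, and the bound $\sum c \ge 5 n^2$ fed back into the same inequality yields $\sum c^2 \ge 10 \sum c$. Consequently $N = \frac{1}{4}(\sum c^2 - \sum c) \ge \frac{9}{40}\sum c(u,v)^2$.

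The key step is to upper bound $B$ using $K_{s,s}$-freeness. A standard double-counting rewrites $B = \sum_{\{w_1,w_2\}: w_1 \ne w_2} e\bigl(G[N(w_1) \cap N(w_2)]\bigr)$. Each common neighborhood $G[N(w_1) \cap N(w_2)]$ is itself $K_{s,s}$-free, so applying~\Cref{coro:KST} with $c = 1/6$ yields $e\bigl(G[N(w_1) \cap N(w_2)]\bigr) \le c(w_1,w_2)^2/6$ whenever $c(w_1,w_2) \ge (s-1) 6^s$; the remaining small-codegree pairs contribute an additive error at most $\binom{n}{2}\binom{(s-1) 6^s}{2}$, which is at most $e(G)^4/(2304 n^4)$ under the hypothesis. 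Summing gives $B \le \frac{1}{6}\sum c(u,v)^2 + e(G)^4/(2304 n^4)$.

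Combining, $N - B \ge \bigl(\frac{9}{40} - \frac{1}{6}\bigr)\sum c(u,v)^2 - e(G)^4/(2304 n^4) = \frac{7}{120}\sum c(u,v)^2 - e(G)^4/(2304 n^4) \ge \frac{7 e(G)^4}{60 n^4} - \frac{e(G)^4}{2304 n^4} > \frac{e(G)^4}{16 n^4}$, using $\sum c^2 \ge 2 e(G)^4/n^4$ and $\frac{7}{60} > \frac{1}{16}$. The main subtlety is that the constant $6$ in the hypothesis $6^s s\, n^{3/2}$ is precisely calibrated to the choice $c = 1/6$ inside~\Cref{coro:KST}: this tuning produces the gap $\frac{9}{40}-\frac{1}{6}=\frac{7}{120}$ between the $\sum c^2$-coefficients of $N$ and $B$, which after invoking $\sum c^2 \ge 2 e(G)^4/n^4$ is just enough to beat $\frac{1}{16}$ while absorbing both the $\sum c$-correction in $N$ and the additive error in $B$.
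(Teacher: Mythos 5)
Your proof is correct, but it takes a genuinely different route from the paper's. The paper never forms the set of all (possibly chorded) $4$-cycles: it first counts \emph{induced} $2$-paths through each vertex $v$ (applying \Cref{coro:KST} inside $N(v)$ to discard the at most $\deg(v)^2/6$ edges there), then, for each pair $u,v$, pairs up induced paths $uxv$ whose midpoints are non-adjacent (a second application of \Cref{coro:KST}, now to the set of midpoints inside the common neighbourhood), and finishes with Cauchy--Schwarz over the pairs $\{u,v\}$. You instead run an inclusion--exclusion: all $C_4$'s counted via codegrees, $N=\frac{1}{2}\sum_{\{u,v\}}\binom{c(u,v)}{2}$, minus the chorded ones, bounded by $B=\sum_{uv\in E(G)}\binom{c(u,v)}{2}$, which you rewrite as $\sum_{\{w_1,w_2\}}e\bigl(G[N(w_1)\cap N(w_2)]\bigr)$ and control by applying \Cref{coro:KST} to common neighbourhoods. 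Both arguments rest on the same two tools (\Cref{coro:KST} with $c=1/6$, whence the $6^s$ in the hypothesis, and convexity/Cauchy--Schwarz), so neither is deeper; yours is arguably a cleaner ``total minus bad'' bookkeeping, while the paper's stays within induced structures throughout and needs no double-counting identity for the bad set. One small numerical nit: convexity actually gives $\sum_{\{u,v\}}c(u,v)=\sum_w\binom{\deg(w)}{2}\ge \frac{2e(G)^2}{n}-e(G)$, not just $\frac{e(G)^2}{n}-e(G)$; with your weaker intermediate bound the Cauchy--Schwarz step literally yields only about $1.94\,e(G)^4/n^4$ rather than the claimed $2e(G)^4/n^4$. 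This does not endanger the argument, since $\frac{7}{120}\cdot 1.94$ still comfortably exceeds $\frac{1}{16}+\frac{1}{2304}$ (or one can simply quote the stronger convexity bound), so the proof stands as a valid alternative.
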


We shall first give the proof of \Cref{thm:prism_upper} assuming \Cref{thm:C4_supersaturation} through \Cref{sec:prism_upper,sec:C4_thick,sec:C4_thin,sec:thin_subtle}, and then establish \Cref{thm:C4_supersaturation} in \Cref{sec:C4_supersaturation}. 

\subsection{Proof sketch of \texorpdfstring{\Cref{thm:prism_upper}}{Theorem 5.1} assuming \texorpdfstring{\Cref{thm:C4_supersaturation}}{Theorem 5.2}} \label{sec:prism_upper}

We use a similar strategy as in the proof of \cite[Theorem 1.1]{gao_janzer_liu_xu}. That is, we separate induced $4$-cycles in $G$ as thick ones and thin ones. Then \Cref{thm:C4_supersaturation} tells that there are either many thick induced $4$-cycles or many thin induced $4$-cycles, and we can find induced $C_{2\ell}^{\square}$ separately in these cases. 

Thanks to \Cref{lem:almost_reg}, we may assume that $G$ is $K$-almost-regular for $K \eqdef 2^{10}$. Suppose $C$ is sufficiently large in terms of $\ell$ and $n$ goes to infinity. (Specifically, $C > \ell^{100\ell}$.) Write $f(s) \eqdef 6^s s^{20 \ell^2}$. Then $G$ is $K$-almost-regular and $K_{s, s}$-free with $e(G) \ge C f(s) n^{3/2}$ and $d \eqdef 2 e(G)/n \ge 2C f(s) n^{1/2}$. 

Assume for the sake of contradiction that $G$ contains no induced copy of $C^{\square}_{2\ell}$. 

Write $g(s) \eqdef f(s)^{1/\ell} = 6^{s/\ell}s^{20\ell}$. Call an induced $4$-cycle $xyzw$ in $G$ \emph{thin} if the codegrees of both diagonal pairs $\deg(x, z), \, \deg(y, w)$ are upper bounded by $C g(s) d^{2/3}$, and \emph{thick} otherwise. \Cref{thm:C4_supersaturation} implies that the number of induced $4$-cycles in $G$ is lower bounded by $\frac{e(G)^4}{16n^4} = 2\veps d^4$, where $\veps \eqdef \frac{1}{512}$. So, there are at least $\veps d^4$ thick induced $4$-cycles or at least $\veps d^4$ thin induced $4$-cycles in $G$. 
\vspace{-0.5em}
\begin{itemize}
    \item We deal with the former case in \Cref{sec:C4_thick}. Roughly, we are going to find an induced copy of $C_{2\ell}^{\square}$ in a dense subgraph of $G$ using a dependent random choice argument. 
    \vspace{-0.5em}
    \item We deal with the latter case in \Cref{sec:C4_thin}. We study an auxiliary graph $\Gamma$ whose vertices and edges refer to edges of $G$ and thin induced $4$-cycles in $G$. Then each nondegenerate $2\ell$-cycle in $\Gamma$ corresponds to a $C_{2\ell}^{\square}$ in $G$. We are going to show by contradiction that there exists a \emph{nice} $2\ell$-cycle in $\Gamma$ which gives an induced copy of $C_{2\ell}^{\square}$ in $G$.
\end{itemize}

We are going to fill in the details of the proof of \Cref{thm:prism_upper} through \Cref{sec:C4_thick,sec:C4_thin,sec:thin_subtle}. From now on, we call a $4$-cycle \emph{thick} (resp.~\emph{thin}) if it is a thick (resp.~thin) induced $4$-cycle in $G$. 

\subsection{There are many thick induced \texorpdfstring{$4$}{4}-cycles} \label{sec:C4_thick}

Assume that the number of thick induced $4$-cycles in $G$ is at least $\veps d^4$. 

Recall that our prism $C_{2\ell}^{\square}$ is a $3$-regular bipartite graph, where each part contains $2\ell$ vertices. So, by \cite[Proposition 2.4 and Lemma 2.3]{hunter_milojevic_sudakov_tomon} applied to $k = 3$ and $H = C_{2\ell}^{\square}$, we deduce the following.

\begin{lemma} \label{lem:rich_to_prism}
    If a subset $X \subseteq V(G)$ with $|X| \ge (16 \ell^2 s)^{8\ell+10}$ has the property that $\deg(S) \ge (16 \ell^2 s)^{8\ell}$ holds for each of its $3$-subsets $S \in \binom{X}{3}$, then $G$ contains an induced copy of $C_{2\ell}^{\square}$. 
\end{lemma}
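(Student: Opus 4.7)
The plan is to produce an induced copy of $C_{2\ell}^\square$ by embedding its natural bipartition $(A,B)$ (with $|A|=|B|=2\ell$) into $G$, using the hypothesis that $X$ is common-neighborhood-rich for $3$-subsets. The overall strategy has three parts: first, pass from $X$ to a much better-behaved subset $Y$; second, pick $2\ell$ vertices of $Y$ as the images of the independent side $A$; third, embed $B$ greedily, choosing each image from the common neighborhood of the three $A$-neighbors specified by the prism.

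The main technical content is packaged in the cited Proposition~2.4 and Lemma~2.3 of Hunter, Milojevi\'c, Sudakov and Tomon. Proposition~2.4 performs the cleanup: starting from a large set $X$ in which every $3$-subset has common neighborhood at least $(16\ell^2 s)^{8\ell}$, it produces a still-sizable subset $Y \subseteq X$ where moreover the common neighborhood of every \emph{four}-subset is smaller than that of every $3$-subset by a polynomial factor in $\ell$ and $s$ (say by a factor of $100\ell$). The polynomial loss in size is absorbed by the gap between $|X| \ge (16\ell^2 s)^{8\ell+10}$ and the required $|Y| \ge (16\ell^2 s)^{8\ell+2}$ or so; along the way the set $Y$ is also arranged to be independent in $G$, so that any $2\ell$ of its vertices automatically form the induced $A$-side.

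Lemma~2.3 then executes the embedding. Take any $v_1, \dots, v_{2\ell} \in Y$ as the images of $a_1, \dots, a_{2\ell}$ and process $B$ in some order. For each $b \in B$ with $A$-neighbors indexed by $\{i_1, i_2, i_3\}$, select $\phi(b)$ from
\[
N(v_{i_1}, v_{i_2}, v_{i_3}) \,\setminus\, \Bigl( \bigcup_{i \notin \{i_1, i_2, i_3\}} N(v_i) \,\cup\, \bigcup_{\text{earlier } b'} \bigl( N(\phi(b')) \cup \{\phi(b')\} \bigr) \Bigr).
\]
The first union has size at most $(2\ell-3) \cdot |N(v_{i_1}, v_{i_2}, v_{i_3})|/(100\ell) < |N(v_{i_1}, v_{i_2}, v_{i_3})|/2$ by the $4$-codegree bound in $Y$; the second union is bounded by $|B| \cdot \max_{b'} |N(\phi(b')) \cap N(v_{i_1}, v_{i_2}, v_{i_3})|$, which is negligible since the $K_{s,s}$-free hypothesis applied to the three fixed vertices $\{v_{i_1}, v_{i_2}, v_{i_3}\}$ forces the codegree of any additional vertex with them to be small compared to $|N(v_{i_1}, v_{i_2}, v_{i_3})|$. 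Hence the candidate set is non-empty, and iterating over $B$ completes an induced $C_{2\ell}^\square$.

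The main obstacle is the cleanup step (Proposition~2.4): without the $4$-codegree being polynomially smaller than the $3$-codegree, the first union above could absorb all of $N(v_{i_1}, v_{i_2}, v_{i_3})$, blocking the greedy embedding, and there would also be no easy way to guarantee the pairwise non-adjacency of the $A$-images. Once the ``$3$-large, $4$-small'' refinement is in hand, every remaining detail---size of $Y$, independence of the $A$-side, control of previously embedded $\phi(b')$---reduces to routine applications of the K\H{o}v\'ari--S\'os--Tur\'an bound in the $K_{s,s}$-free host graph.
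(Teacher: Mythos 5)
Your proposal coincides with the paper's proof, which consists solely of the observation that $C_{2\ell}^{\square}$ is a $3$-regular bipartite graph with parts of size $2\ell$ together with a direct application of Proposition~2.4 and Lemma~2.3 of Hunter--Milojevi\'c--Sudakov--Tomon with $k=3$; the paper offers no argument beyond this citation, and your sketch of the cleanup-plus-greedy-embedding mechanism is the same reduction. One caveat on your gloss of the embedding step: $K_{s,s}$-freeness alone does \emph{not} force a fourth vertex to have small codegree with a fixed triple $\{v_{i_1},v_{i_2},v_{i_3}\}$ when $s>4$ (a $K_{4,n}$-type configuration is $K_{s,s}$-free), so controlling adjacencies to previously embedded vertices needs the finer properties arranged inside the cited lemmas rather than this shortcut --- but since you delegate that technical content to the citation, exactly as the paper does, this does not affect the correctness of your reduction.
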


Notice that $\deg(S)$ denotes the codegree of $S$ in $G$ rather than $X$ in \Cref{lem:rich_to_prism}. 

The pigeonhole principle implies that we can find an edge $xy \in E(G)$ with
\begin{equation} \label{eq:zw_count}
    \bigl| \bigl\{ (z, w) \in V(G)^2 : \text{$xyzw$ is thick with $\deg(x, z) \ge C g(s) d^{2/3}$} \bigr\} \bigr| \ge \frac{\veps d^4 /2}{2e(G)} > C f(s) d. 
\end{equation}
The last inequality follows from the facts $e(G) \ge C f(s) n^{3/2}$ and $d = 2e(G)/n, \, \veps = \frac{1}{512}$. 

Let $A \eqdef N(x)$ and $B \eqdef \bigl\{ z \in N(y) : \deg(x, z) \ge C g(s) d^{2/3} \bigr\}$. Since $G$ is $K$-almost-regular, 
\begin{equation} \label{eq:A_bound}
    |A| = |N(x)| \le Kd. 
\end{equation}
Denote by $e(A, B)$ the number of edges between $A$ and $B$. We derive two lower bounds on $e(A, B)$. 
\vspace{-0.5em}
\begin{itemize}
    \item On one hand, each thick $4$-cycle $xyzw$ corresponds to an edge between $A$ and $B$. Thus, from \eqref{eq:zw_count} together with \eqref{eq:A_bound} we deduce that
    \begin{equation} \label{eq:eAB_fbound}
        e(A, B) \ge C f(s) d \ge \frac{Cf(s)}{K}|A|. 
    \end{equation}

    \vspace{-1em}
    \item On the other hand, the definition of $B$ and \eqref{eq:A_bound} imply that
    \begin{equation} \label{eq:eAB_gbound}
        e(A, B) \ge C g(s) d^{2/3} |B| \ge \frac{C g(s)}{K^{2/3}} |A|^{2/3} |B|. 
    \end{equation}
\end{itemize}
\vspace{-0.5em}

Write $C_1 \eqdef (16 \ell^2 s)^{8\ell+10}$. It follows from $C \gg \ell$ and \eqref{eq:eAB_fbound} that
\[
p \eqdef (2C_1) \cdot \frac{|A|}{e(A, B)} \le (2C_1) \cdot \frac{K}{Cf(s)} < 1. 
\]
Construct a subset $B' \subseteq B$ by including each $b \in B$ independently with probability $p$. Uniformly at random, we pick a vertex $v \in A$ and define $B'' \eqdef B' \cap N(v)$. It follows that 
\begin{align*}
    \E(|B''|) = \sum_{b \in B} p \cdot \frac{\deg_A(b)}{|A|} = p \cdot \frac{e(A, B)}{|A|} = 2C_1. 
\end{align*}
Denote by $\cB$ the collection of $3$-subsets of $B''$ who has at most $C_1$ common neighbors. Then
\begin{align*}
    \E(|\cB|) \le \binom{|B|}{3} \cdot p^3 \cdot \frac{C_1}{|A|} \le \frac{|B|^3}{6} \cdot \frac{8C_1^3|A|^3}{e(A, B)^3} \cdot \frac{C_1}{|A|} = \frac{4 C_1^4 |A|^2 |B|^3}{3 e(A, B)^3} \overset{(*)}{\le} \frac{4 C_1^4 K^2}{3 C^3 g(s)^3} \overset{(**)}{\le} C_1, 
\end{align*}
where we applied \eqref{eq:eAB_gbound} at ($*$) and $C \gg \ell$ at ($**$). So, there is some $(v, B')$ with $|B''| - |\cB| \ge C_1$. For each $3$-set in $\cB$, delete an arbitrary vertex from $B''$ in it to obtain $X$. Then $|X| \ge |B''| - |\cB| \ge C_1$ and each $S \in \binom{X}{3}$ has $\deg(S) \ge C_1$. Therefore, \Cref{lem:rich_to_prism} implies that $G$ contains an induced $C^{\square}_{2\ell}$. 

\subsection{There are many thin induced \texorpdfstring{$4$}{4}-cycles} \label{sec:C4_thin}

Assume that the number of thin induced $4$-cycles in $G$ is at least $\veps d^4$. 

Define an auxiliary graph $\cH$ with $V(\cH) = E(G)$. Put an edge between vertices $e_1 = xy, \, e_2 = zw$ of $\cH$ if and only if $xyzw$ is a thin $4$-cycle in $G$. Then $|V(\cH)| = |E(G)| = nd/2$ and $|E(\cH)|\ge \veps d^4$.

By repeatedly removing vertices of degree less than $\frac{|E(\cH)|}{2|V(\cH)|}$ from the graph $\cH$, we know that $\cH$ contains a subgraph $\Gamma$ such that $|V(\Gamma)| \le nd/2$ and $\delta(\Gamma) \ge \frac{|E(\cH)|}{2|V(\cH)|} \ge d$, as $C \gg \ell$. 

For $e = xy, \, e' = x'y'$, write $e \sim e'$ if and only if $\{x, y\} \cap \{x', y'\} \ne \varnothing$. (Notice that $xy$ and $\{x, y\}$ refer to the same $2$-element set.) Then $\sim$ is a symmetric binary relation over $V(\Gamma)$. For any pair of (not necessarily distinct) vertices $e_1, e_2 \in V(\Gamma)$, the definition of thin $4$-cycles implies that 
\begin{equation} \label{eq:sim_edge}
    \bigl| \bigl\{ e_3 \in V(\Gamma) : e_1 \sim e_3, \, e_2e_3 \in E(\Gamma) \bigr\} \bigr| \le 4C g(s) d^{2/3}. 
\end{equation}

We are going to find a subgraph of $\Gamma$ containing many homomorphic copies of $C_{2\ell}$. To achieve this, we use Sidorenko property of $C_{2\ell}$ and a regularization result of Janzer \cite{janzer_disproof}. 

\begin{lemma}[{\cite[Lemma 2.3]{janzer_disproof}}] \label{lem:bi_regular}
    Let $G$ be an $n$-vertex graph with average degree $d > 0$. Then there exist $D_1, D_2 \ge d/4$ and a non-empty bipartite subgraph $G'$ of $G$ with parts $X, Y$ such that 
    \vspace{-0.5em}
    \begin{itemize}
        \item for every $x \in X$, we have $\deg_{G'}(x) \ge \frac{D_1}{256(\log n)^2}$ and $\deg_G(x) \le D_1$, and 
        \vspace{-0.5em}
        \item for every $y \in Y$, we have $\deg_{G'}(y) \ge \frac{D_2}{256(\log n)^2}$ and $\deg_G(y) \le D_2$. 
    \end{itemize}
    \vspace{-0.5em}
\end{lemma}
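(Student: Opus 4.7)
The plan is a standard two-step dyadic degree localization followed by an iterative low-degree vertex removal, patterned on the classical regularization arguments of K\H{o}v\'{a}ri--S\'{o}s--Tur\'{a}n type.

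First, I would localize to a bipartite piece whose two sides have roughly uniform degrees in $G$. For each nonnegative integer $i \le \lceil \log_2 n \rceil$, let $A_i \eqdef \{v \in V(G) : 2^i \le \deg_G(v) < 2^{i+1}\}$. Vertices with $\deg_G(v) < d/4$ are incident to fewer than $nd/4$ edges, so at least $nd/4$ edges of $G$ have both endpoints in ``high'' classes $A_i$ with $2^i \ge d/4$. Pigeonholing over the at most $O((\log n)^2)$ ordered pairs of such high classes produces indices $i, j$ with $2^i, 2^j \ge d/4$ and $e_G(A_i, A_j) = \Omega\bigl(nd/(\log n)^2\bigr)$. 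Set $D_1 \eqdef 2^{i+1}$ and $D_2 \eqdef 2^{j+1}$, so $D_1, D_2 \ge d/2 \ge d/4$ and every $x \in A_i$ satisfies $\deg_G(x) < D_1$. Take $G_0$ to be the bipartite subgraph between $X \eqdef A_i$ and $Y \eqdef A_j$ when $i \ne j$; in the coincident case $i = j$ I would randomly split $A_i$ into two halves $X, Y$, retaining in expectation at least half the edges.

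Next I would clean up $G_0$ to enforce the minimum-degree conditions. Starting from $G' \eqdef G_0$, iteratively delete any $x \in X$ with $\deg_{G'}(x) < D_1/(256 (\log n)^2)$ and any $y \in Y$ with $\deg_{G'}(y) < D_2/(256 (\log n)^2)$, until no such vertex remains. From $|X| \cdot D_1/2 \le \sum_{v \in X} \deg_G(v) \le nd$ we obtain $|X| \le 2nd/D_1$, and similarly $|Y| \le 2nd/D_2$. The total number of edges discarded during cleanup is therefore at most
\[
|X| \cdot \frac{D_1}{256 (\log n)^2} + |Y| \cdot \frac{D_2}{256 (\log n)^2} \le \frac{nd}{64 (\log n)^2},
\]
which is strictly below $|E(G_0)| = \Omega\bigl(nd/(\log n)^2\bigr)$ with room to spare. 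Hence $G'$ is non-empty, and by construction satisfies the required lower bounds on $\deg_{G'}$ and the inherited upper bounds on $\deg_G$ on both sides.

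The main obstacle is purely bookkeeping: one must ensure the edge loss during cleanup strictly undershoots $|E(G_0)|$ after absorbing the dyadic pigeonhole factor $(\log n)^2$, the factor-of-two hit from the random bipartition in the $i = j$ case, and the conversion between $\log_2$ and $\log$. The generous constant $256$ in the denominator comfortably accommodates all three, so no additional idea beyond double counting is needed.
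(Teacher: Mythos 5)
Your argument is correct and is essentially the standard proof of this lemma: the paper itself does not prove it but quotes it from Janzer, whose argument is the same dyadic degree classification, pigeonhole over pairs of classes, and iterative deletion of low-degree vertices that you describe, with the constant $256(\log n)^2$ absorbing exactly the losses you list. The only slip is in defining the ``high'' classes: a vertex of degree at least $d/4$ lies in a class $A_i$ with $2^{i+1} > d/4$ but not necessarily $2^i \ge d/4$, so you should declare high the classes with $2^{i+1} > d/4$, which still gives $D_1 = 2^{i+1} \ge d/4$ (though not necessarily $\ge d/2$) and leaves the rest of your bookkeeping unchanged.
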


\begin{lemma}[{\cite[Lemma 2.4]{janzer_disproof}}] \label{lem:hom_C2l}
    Suppose $k \in \N_+$ and let $G$ be a bipartite graph with parts $X, Y$ such that $\deg(x) \ge a \, (\forall x \in X)$ and $\deg(y) \ge b \, (\forall y \in Y)$. Then $\hom(C_{2k}, G) \ge a^k b^k$. 
\end{lemma}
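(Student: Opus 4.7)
The plan is to count closed walks of length $2k$ directly and use Cauchy--Schwarz to handle the closing constraint. Write $\hom_b(C_{2k}, G)$ for the number of homomorphisms from $C_{2k}$ to $G$ that send one fixed bipartition class of $C_{2k}$ into $X$; trivially $\hom(C_{2k}, G) \ge \hom_b(C_{2k}, G)$. Such a homomorphism with $\phi(v_0)\in X$ corresponds bijectively to a closed walk $x_0 y_0 x_1 y_1 \cdots x_{k-1} y_{k-1} x_0$ of length $2k$ in $G$, so cutting the walk at its midpoint yields
\[
\hom_b(C_{2k}, G) \;=\; \sum_{x_0 \in X}\sum_{z} W_k(x_0, z)^2,
\]
where $W_k(x_0, z)$ counts length-$k$ walks from $x_0$ to $z$ (with $z\in X$ if $k$ is even, $z\in Y$ if $k$ is odd).

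Building a length-$k$ walk from $x_0$ greedily, each step from an $X$-vertex admits at least $a$ extensions and each step from a $Y$-vertex at least $b$, so $\sum_z W_k(x_0, z) \ge a^{\lceil k/2\rceil} b^{\lfloor k/2\rfloor}$. Cauchy--Schwarz on the inner sum then gives $\sum_z W_k(x_0, z)^2 \ge (\sum_z W_k(x_0, z))^2 / M$, where $M$ is the support size of $W_k(x_0, \cdot)$, at most $|X|$ if $k$ is even and at most $|Y|$ if $k$ is odd. Summing over $x_0 \in X$, the even case immediately gives $\hom_b(C_{2k}, G) \ge |X|\cdot(ab)^k / |X| = a^k b^k$. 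For the odd case the same computation gives only $\hom_b(C_{2k}, G) \ge |X| a^{k+1} b^{k-1}/|Y|$; the symmetric argument starting from $y_0 \in Y$ gives $\hom_b(C_{2k}, G) \ge |Y| b^{k+1} a^{k-1}/|X|$, and the geometric mean of these two valid lower bounds is exactly $a^k b^k$.

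The only real subtlety is the parity bookkeeping in the odd case, where the two one-sided bounds must be combined via $\max \ge \mathrm{GM}$; no tool beyond greedy walk counting and Cauchy--Schwarz is required. Alternatively, one can cite the bipartite Sidorenko inequality for even cycles, $\hom_b(C_{2k}, G) \ge |E(G)|^{2k}/(|X|^k|Y|^k)$, and combine it with $|E(G)|^2 \ge ab|X||Y|$ (from double counting each degree bound) to obtain the same conclusion in one line.
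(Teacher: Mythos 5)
The paper does not prove this lemma at all---it is quoted directly from Janzer's paper (Lemma 2.4 there)---so there is no in-paper argument to compare against; the only question is whether your self-contained proof is sound, and it is. The identity $\hom_b(C_{2k},G)=\sum_{x_0\in X}\sum_{z}W_k(x_0,z)^2$ is correct (cutting a closed $2k$-walk at the antipodal position is a bijection onto ordered pairs of $k$-walks from $x_0$ with a common endpoint), the greedy bound $\sum_z W_k(x_0,z)\ge a^{\lceil k/2\rceil}b^{\lfloor k/2\rfloor}$ is correct, and the Cauchy--Schwarz step with support size at most $|X|$ (even $k$) or $|Y|$ (odd $k$) gives $a^kb^k$ immediately in the even case. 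In the odd case there is one small imprecision: the ``symmetric argument'' actually bounds the number of homomorphisms sending the fixed class of $C_{2k}$ into $Y$, which is not literally $\hom_b$ as you defined it; this is harmless, since that count equals $\hom_b$ after rotating the cycle by one position (and in any case both counts are lower bounds for $\hom(C_{2k},G)$, which is all you need), so taking the maximum of the two bounds $|X|a^{k+1}b^{k-1}/|Y|$ and $|Y|b^{k+1}a^{k-1}/|X|$ and comparing with their geometric mean indeed yields $a^kb^k$. Your alternative one-liner via the bipartite Sidorenko inequality for even cycles combined with $|E(G)|^2\ge ab\,|X||Y|$ is also valid, though it invokes a heavier black box than the lemma itself; the elementary walk-counting argument is preferable as a stand-alone proof.
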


According to \Cref{lem:bi_regular}, there is a non-empty bipartite subgraph $\Gamma'$ of $\Gamma$ with parts $X, Y$ with
\vspace{-0.5em}
\begin{itemize}
    \item for every $x \in X$, we have $\deg_{\Gamma'}(x) \ge \frac{D_1}{256(\log  \lvert V(\Gamma) \rvert)^2}$ and $\deg_{\Gamma}(x) \le D_1$, and 
    \vspace{-0.5em}
    \item for every $y \in Y$, we have $\deg_{\Gamma'}(y) \ge \frac{D_2}{256(\log \lvert V(\Gamma) \rvert)^2}$ and $\deg_{\Gamma}(y) \le D_2$, 
\end{itemize}
\vspace{-0.5em}
where $D_1, D_2 \ge \delta(\Gamma)/4 \ge d/4$. It then follows from \Cref{lem:hom_C2l} and $\ell \ge 10, \, d \to \infty$ that
\begin{equation} \label{eq:hom_C2l}
    \hom(C_{2\ell}, \Gamma') \ge \biggl( \frac{D_1D_2}{256^2 ( \log \lvert V(\Gamma) \rvert )^4} \biggr)^{\ell} \ge d^{19}. 
\end{equation}

\medskip

Assume for the sake of contradiction that $G$ contains no induced copy of $C_{2\ell}^{\square}$. Observe that any homomorphic $2\ell$-cycle $(e_1, \dots, e_{2\ell})$ in $\Gamma$ correspondes to a homomorphic copy of $C_{2\ell}^{\square}$ in $G$. We call a homomorphic $2\ell$-cycle $(e_1, \dots, e_{2\ell})$ in $\Gamma$ \emph{degenerate} if $e_i \sim e_j$ for some $i \ne j$. Since $G$ contains no induced $C_{2\ell}^{\square}$, every non-degenerate homomorphic $(e_1, \dots, e_{2\ell})$ fits into exactly one of the followings. 
\vspace{-0.5em}
\begin{itemize}
    \item If there is an edge in $G$ between non-adjacent $e_i, e_j$ on $(e_1, \dots, e_{2\ell})$ for some $i \notin \{1, \ell+1\}$ and $j \notin \{1, \ell+1\}$, then $(e_1, \dots, e_{2\ell})$ is \emph{typical}. Otherwise, $(e_1, \dots, e_{2\ell})$ is \emph{special}. 
\end{itemize}
\vspace{-0.5em}
So, the set $\Hom(C_{2\ell}, \Gamma')$ is partitioned into \emph{degenerate}, \emph{typical}, or \emph{special} homomorphic $2\ell$-cycles. 

\begin{claim} \label{claim:special_homo}
    In $\Hom(C_{2\ell}, \Gamma')$, the proportion of special ones is at most $2/\ell$. 
\end{claim}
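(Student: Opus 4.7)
The plan is to bound the number of special homomorphic $2\ell$-cycles by a cyclic averaging argument, exploiting the rotation symmetry of $C_{2\ell}$ together with the standing assumption that $G$ contains no induced $C^{\square}_{2\ell}$. I will set $\cS \subseteq \cN \subseteq \Hom(C_{2\ell}, \Gamma')$ to denote the sets of special and of non-degenerate homomorphic $2\ell$-cycles, respectively. For each $\varphi \in \Hom(C_{2\ell}, \Gamma')$ and each $k \in \Z/2\ell\Z$, I define the cyclic rotation $\varphi_k(i) \eqdef \varphi(i+k)$. Since $\Gamma'$ is bipartite and $C_{2\ell}$ has even length, every $\varphi_k$ is again a valid homomorphism (the rotation merely swaps the two parts of the bipartition when $k$ is odd), and non-degeneracy is preserved because the multiset of images is unchanged. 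The goal is to prove $2\ell \cdot |\cS| \le 4|\cN|$ by double counting, which at once yields $|\cS| \le (2/\ell)|\cN| \le (2/\ell)|\Hom(C_{2\ell}, \Gamma')|$ and settles the claim.

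The key step is to show that for each $\varphi \in \cN$, at most four of its $2\ell$ rotations lie in $\cS$. Under the contradiction hypothesis that $G$ contains no induced $C^{\square}_{2\ell}$, every $\varphi \in \cN$ yields a non-degenerate homomorphic copy of $C^{\square}_{2\ell}$ in $G$ that fails to be induced, so there must exist a \emph{chord}: a pair $(p_0, q_0)$ of positions of $C_{2\ell}$ non-adjacent on the cycle whose images $\varphi(p_0), \varphi(q_0) \in E(G)$ admit an additional $G$-edge between their endpoints. Chords of the rotation $\varphi_k$ are precisely the $(-k)$-shifts of the chords of $\varphi$, so for $\varphi_k$ to be special the shifted pair $(p_0 - k, q_0 - k)$ must meet the diameter set $\{1, \ell+1\}$ modulo $2\ell$. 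Resolving each of the four possibilities $p_0 - k \equiv 1$, $p_0 - k \equiv \ell+1$, $q_0 - k \equiv 1$, or $q_0 - k \equiv \ell+1$ modulo $2\ell$ pins $k$ to at most four residues, namely
\[
k \in \{p_0 - 1, \; q_0 - 1, \; p_0 - 1 - \ell, \; q_0 - 1 - \ell\} \pmod{2\ell}.
\]

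To finish, I count pairs $(\varphi, k) \in \cN \times (\Z/2\ell\Z)$ with $\varphi_k \in \cS$ in two ways. Summing first over $\varphi$ gives an upper bound of $4|\cN|$ by the previous step, while summing first over $\psi = \varphi_k \in \cS$ yields exactly $2\ell \cdot |\cS|$, since each $\psi \in \cS$ corresponds to the $2\ell$ pairs $(\psi_{-k}, k)$, all of whose first coordinates lie in $\cN$ by rotation-invariance of non-degeneracy and $\cS \subseteq \cN$. Equating the two counts gives the claimed bound. I do not foresee a major obstacle here; the only point that calls for minor care is that rotations act on $\Hom(C_{2\ell}, \Gamma')$ across the bipartition of $\Gamma'$, which is automatic because $C_{2\ell}$ is itself of even length.
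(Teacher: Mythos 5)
Your proposal is correct and follows essentially the same route as the paper: the paper's proof is exactly the rotation-averaging observation that among the $2\ell$ cyclic shifts of any homomorphic $2\ell$-cycle at most $4$ can be special (because a fixed chord, which exists for every non-degenerate cycle by the no-induced-$C_{2\ell}^{\square}$ assumption, must be shifted onto a position in $\{1,\ell+1\}$), giving the proportion $4/(2\ell)=2/\ell$. Your write-up just makes explicit the chord-shifting count and the double count $2\ell\,|\cS|\le 4\,|\cN|$ that the paper leaves implicit.
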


\begin{poc}
    Let $(e_1, \dots, e_{2\ell}) \in \Hom(C_{2\ell}, \Gamma')$ be a homomorphic $2\ell$-cycle. Among its $2\ell$ rotations
    \[
    (e_1, e_2, \dots, e_{2\ell}), \qquad (e_2, e_3, \dots, e_1), \qquad \dots, \qquad (e_{2\ell}, e_1, \dots, e_{2\ell-1}), 
    \]
    at most $4$ are special. This implies that the proportion of special ones is at most $4/(2\ell) = 2/\ell$. 
\end{poc}

Let $\cV \eqdef V(\Gamma')^2$ be the set of (not necessarily distinct) vertex pairs in $\Gamma'$. For $(x, y) \in \cV$, define
\begin{align*}
    \cW_{x, y} &\eqdef \bigl\{ (e_1, \dots, e_{\ell+1}) \in \Hom(P_{\ell}, \Gamma') : e_1 = x, \, e_{\ell+1} = y \bigr\}, \\
    \cC_{x, y} &\eqdef \bigl\{ (e_1, \dots, e_{2\ell}) \in \Hom(C_{2\ell}, \Gamma') : e_1 = x, \, e_{\ell+1} = y \bigr\}. 
\end{align*}
One can interpret $\cW_{x, y}$ as the collection of $\ell$-walks between $x$ and $y$. Clearly, we have
\[
|\cW_{x, y}|^2 = |\cC_{x, y}|, \qquad \lvert \Hom(C_{2\ell}, \Gamma') \rvert = \sum_{(x, y) \in \cV} |\cC_{x, y}|. 
\]

Fix parameters $\lambda \eqdef s \cdot 80^{s} \cdot \ell^{2s}$ and $\delta \eqdef \frac{1}{100\lambda^2}$. 

\begin{claim} \label{claim:degenerate_homo}
    In $\Hom(C_{2\ell}, \Gamma')$, the proportion of degenerate ones is at most $\delta$. 
\end{claim}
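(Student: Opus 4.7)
The plan is to bound the number of degenerate homomorphic $2\ell$-cycles via a union bound over position pairs, using \eqref{eq:sim_edge} for the per-pair estimate. First, observe that any two consecutive vertices in a homomorphic walk in $\Gamma'$ correspond to opposite edges of a thin $4$-cycle of $G$ and are hence vertex-disjoint; thus $e_i\not\sim e_{i\pm 1}$, and only pairs $(i,j)$ with cyclic distance at least $2$ can produce a degeneracy. There are at most $(2\ell)^2$ such ordered pairs, so it suffices to prove that for each fixed such pair $(i,j)$, the quantity
\[
N_{i,j}:=\bigl|\bigl\{(e_1,\dots,e_{2\ell})\in\Hom(C_{2\ell},\Gamma') : e_i\sim e_j\bigr\}\bigr|
\]
satisfies $N_{i,j}\le (\delta/(2\ell)^2)\cdot \hom(C_{2\ell},\Gamma')$.

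By rotational symmetry we may take $i=1$ and $j=k+1$ for some $k\in\{2,\dots,2\ell-2\}$. Separating out the coordinate $e_j$ of the cycle, we write
\[
N_{1,k+1}=\sum_{W}\bigl|\bigl\{e_j\in V(\Gamma') : e_{j-1}e_j,\ e_je_{j+1}\in E(\Gamma'),\ e_1\sim e_j\bigr\}\bigr|,
\]
where the outer sum runs over the $(2\ell-2)$-walks $W=(e_{j+1},e_{j+2},\dots,e_{2\ell},e_1,\dots,e_{j-1})$ in $\Gamma'$. Applying \eqref{eq:sim_edge} with the pair $(e_1,e_{j-1})$ bounds the inner cardinality by $4Cg(s)d^{2/3}$. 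Applying the analogous decomposition to $\hom(C_{2\ell},\Gamma')$ itself replaces this inner cardinality by the number of common $\Gamma'$-neighbors of $e_{j-1}$ and $e_{j+1}$, namely $A^2(e_{j-1},e_{j+1})$, where $A$ is the adjacency matrix of $\Gamma'$.

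The key comparison is to show that, on average (weighted by the $(2\ell-2)$-walks $W$), the common-neighbor count $A^2(e_{j-1},e_{j+1})$ is much larger than $4Cg(s)d^{2/3}$. Using the bipartite almost-regularity furnished by \Cref{lem:bi_regular} together with log-convexity of $k\mapsto \operatorname{tr}(M^k)$ for $M=BB^T$ (with $B$ the biadjacency matrix of $\Gamma'$), one obtains a trace inequality of the form
\[
\frac{\operatorname{tr}(M^\ell)}{\vec 1^T M^{\ell-1}\vec 1}\ \gtrsim\ \frac{d}{\log^{O(\ell)}|V(\Gamma)|},
\]
which transfers to the desired averaged lower bound on $A^2(e_{j-1},e_{j+1})$. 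Combining with the per-pair bound and summing over $O(\ell^2)$ position pairs yields
\[
\frac{|\{\text{degenerate cycles}\}|}{\hom(C_{2\ell},\Gamma')}\ \le\ O_{\ell}\!\left(\frac{Cg(s)\log^{O(\ell)}|V(\Gamma)|}{d^{1/3}}\right),
\]
which drops below $\delta=1/(100\lambda^2)$ once $d$ is sufficiently large in $s$ and $\ell$. Since $n\to\infty$ forces $d\to\infty$ under \Cref{lem:almost_reg} and $C$ is chosen large in $\ell$, the conclusion follows.

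The main obstacle is rigorously justifying the trace inequality: because $\Gamma'$ is only almost-regular up to polylog factors and $M$ may carry small nonzero eigenvalues, the log-convexity must be combined carefully with the min-degree bound $\deg_{\Gamma'}\ge d/(256\log^2|V(\Gamma)|)$ on both sides of the bipartition. An alternative route is to bypass the spectrum entirely and invoke the $K_{s,s}$-freeness of $G$ on triples $(e_{j-1},e_j,e_{j+1})$ to obtain the lower bound on $A^2(e_{j-1},e_{j+1})$ directly; this combinatorial path is the likely source of the explicit $s$-dependence encoded in $\lambda = s\cdot 80^s\cdot\ell^{2s}$ in the target $\delta$.
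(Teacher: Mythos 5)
Your reductions are fine up to the per-pair estimate: consecutive vertices of a walk in $\Gamma'$ are indeed never similar, and \eqref{eq:sim_edge} does give, for each fixed position pair, a bound of the form $4Cg(s)d^{2/3}\cdot(\text{number of }(2\ell-2)\text{-edge walks in }\Gamma')$. The gap is the comparison step that follows. Your route needs the average number of ways to close a $(2\ell-2)$-walk, i.e.\ $\hom(C_{2\ell},\Gamma')/\hom(P_{2\ell-2},\Gamma')$, to be $\gg_{s,\ell,C} d^{2/3}$, and you assert the trace inequality $\operatorname{tr}(M^{\ell})/\mathbf{1}^{T}M^{\ell-1}\mathbf{1}\gtrsim d/\log^{O(\ell)}\lvert V(\Gamma)\rvert$ on the strength of almost-biregularity and log-convexity. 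That statement is false in the relevant regime: here $\lvert V(\Gamma')\rvert$ can be as large as $nd/2\approx d^{3}$ (recall $d\gtrsim n^{1/2}$) while the degrees are only $\approx d$. For instance, if $\Gamma'$ were a near-$d$-regular bipartite expander on $\Theta(d^{3})$ vertices, then $\operatorname{tr}(M^{\ell})\approx d^{2\ell}$ is dominated by the Perron eigenvalue $\approx d^{2}$, whereas $\mathbf{1}^{T}M^{\ell-1}\mathbf{1}\approx \lvert X\rvert\, d^{2\ell-2}\approx d^{2\ell+1}$ up to polylogs, so the ratio is about $1/d$, not $d/\mathrm{polylog}$; equivalently $d^{2/3}\cdot\hom(P_{2\ell-2},\Gamma')\approx d^{2\ell+5/3}\gg d^{2\ell}\approx\hom(C_{2\ell},\Gamma')$, so your union bound over position pairs exceeds the total cycle count and certifies nothing. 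Nothing in your argument (which uses only \eqref{eq:sim_edge} and the degree information from \Cref{lem:bi_regular}) excludes this scenario, so the proof does not go through. Your fallback of using $K_{s,s}$-freeness on triples to lower bound $A^{2}(e_{j-1},e_{j+1})$ cannot work either: $K_{s,s}$-freeness only yields upper bounds on codegrees, never lower bounds, and in the paper the parameters $\lambda,\delta$ arise from \Cref{claim:rich_walk}, not from this claim.

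This is precisely why the paper proves \Cref{claim:degenerate_homo} via Janzer's \Cref{lem:degenerate_cycle}: that lemma bounds the number of degenerate cycles by $32\ell^{3/2}M^{1/2}m^{1/(2\ell)}\hom(C_{2\ell},\Gamma')^{1-1/(2\ell)}$, i.e.\ it compares the degenerate count with the cycle count itself rather than with walk counts. Combined with \eqref{eq:hom_C2l}, the effective gain is $\hom(C_{2\ell},\Gamma')^{1/(2\ell)}\gtrsim \sqrt{D_{1}D_{2}}/\log^{2}m$, against a loss of $M^{1/2}m^{1/(2\ell)}=(\alpha D_{1}D_{2})^{1/2}m^{1/(2\ell)}$ with $\alpha\approx d^{-1/3}$ and $m^{1/(2\ell)}\lesssim d^{3/(2\ell)}$; the net factor $\sqrt{\alpha}\,m^{1/(2\ell)}\cdot\mathrm{polylog}$ is $o(1)$ exactly because $\ell\ge 10$. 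Any correct proof along your lines would need such a self-referential cycle-versus-cycle comparison (or a genuinely new idea), not a walks-versus-cycles comparison.
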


\begin{claim} \label{claim:rich_walk}
    If $|\cW_{x, y}| > \lambda$, then in $\cC_{x, y}$ we have that
    \vspace{-0.5em}
    \begin{itemize}
        \item the proportion of degenerate homomorphic $2\ell$-cycles is at least $4\delta$, or 
        \vspace{-0.5em}
        \item the proportion of special homomorphic $2\ell$-cycles is at least $1/2$. 
    \end{itemize}
    \vspace{-0.5em}
\end{claim}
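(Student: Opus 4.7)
The approach is by contradiction. Suppose $|\cW_{x,y}| > \lambda$ while both bullets fail, so fewer than $4\delta |\cW_{x,y}|^2$ pairs in $\cC_{x,y}$ are degenerate and fewer than $\tfrac{1}{2} |\cW_{x,y}|^2$ are special. Since degenerate, typical, and special partition $\cC_{x,y}$, the number of typical pairs exceeds $\bigl( \tfrac{1}{2} - 4\delta \bigr) |\cW_{x,y}|^2 > \tfrac{1}{3} |\cW_{x,y}|^2$. The goal is to convert this surplus of typical pairs into a $K_{s,s}$ in $G$, contradicting the $K_{s,s}$-freeness of $G$.

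Each typical pair $(P, Q) \in \cW_{x, y}^2$ admits a \emph{witness}: a pair $(i^*, j^*)$ with $i^*, j^* \in \{2, \dots, \ell, \ell+2, \dots, 2\ell\}$, $|i^* - j^*| \not\equiv 1 \pmod{2\ell}$, and an edge of $G$ joining the $i^*$-th edge to the $j^*$-th edge of the associated $2\ell$-walk. There are at most $(2\ell)^2$ admissible index pairs, so by pigeonhole one $(i_0, j_0)$ witnesses at least $\tfrac{|\cW_{x,y}|^2}{12\ell^2}$ typical pairs. Up to swapping $P$ and $Q$ we are in one of two cases: \textbf{Case~A}, where $i_0, j_0 \in \{2, \dots, \ell\}$ and the chord lies inside the walk $P$ alone; and \textbf{Case~B}, where $i_0 \in \{2, \dots, \ell\}$, $j_0 \in \{\ell+2, \dots, 2\ell\}$, and the chord crosses between $P$ and $Q$.

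In Case~B, build a bipartite auxiliary graph $H$ on two copies of $\cW_{x,y}$ with $PQ \in E(H)$ whenever $(P, Q)$ is typical with witness $(i_0, j_0)$; so $e(H) \ge \tfrac{|\cW_{x,y}|^2}{12\ell^2}$. Each such edge of $H$ comes with an endpoint of the $G$-edge at position $i_0$ of $P$ that is $G$-adjacent to an endpoint of the $G$-edge at position $j_0$ of $Q$. A four-way pigeonhole over the endpoint patterns produces $H' \subseteq H$ with $e(H') \ge \tfrac{e(H)}{4}$ in which the endpoint choice is constant on each side. Projecting walks to their selected $i_0$-th (resp.\ $j_0$-th) endpoints pushes $H'$ forward into a bipartite subgraph of $G$; an averaging step caps the multiplicity of each projection so that after applying the K\H{o}v\'{a}ri--S\'{o}s--Tur\'{a}n theorem (\Cref{thm:KST}) we obtain a $K_{s,s}$ in $G$ whenever $|\cW_{x,y}|$ exceeds the threshold $s \cdot 80^s \cdot \ell^{2s} = \lambda$. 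Case~A is analogous: the chord lives in a single walk, so at least $\tfrac{|\cW_{x,y}|}{12\ell^2}$ walks $P$ carry an internal chord at $(i_0, j_0)$, and after the same endpoint pigeonhole the pair of projections $P \mapsto (\text{endpoint at } i_0, \text{endpoint at } j_0)$ again supplies enough $G$-edges to invoke KST.

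The main technical obstacle is that these endpoint projections need not be injective: several distinct walks may share the same intermediate $\Gamma'$-vertex, so $e(H')$ does not directly transfer to a simple bipartite subgraph of $G$ with the same edge count. Handling this requires weighting vertices by their walk-multiplicity and invoking KST fiberwise (or, equivalently, inserting an initial averaging step capping multiplicities before the KST application). The exact form $\lambda = s \cdot 80^s \cdot \ell^{2s}$ is calibrated to absorb the $\ell^2$-loss from the witness pigeonhole, the $4$-loss from the endpoint pigeonhole, the KST constant $(s-1)^{1/s}$ raised to the $s$-th power, and the multiplicity term, all in one shot.
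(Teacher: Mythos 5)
The reduction to ``more than $\tfrac13|\cW_{x,y}|^2$ typical pairs'' and the pigeonhole over witness positions are fine, but the final step---projecting walks to $G$-vertices and invoking a weighted or ``fiberwise'' KST---is exactly where the argument breaks, and the obstacle you flag is not a technicality that averaging can absorb. The only multiplicity control available comes from the failure of the first bullet: if two walks' $i_0$-th edges share a $G$-vertex $u$, their concatenation is degenerate, so each fiber of your projection has size at most roughly $2\sqrt{\delta}\,|\cW_{x,y}|=|\cW_{x,y}|/(5\lambda)$. Hence a single $G$-edge can witness up to $\Theta(|\cW_{x,y}|^2/\lambda^2)$ of your $\ge |\cW_{x,y}|^2/(48\ell^2)$ pairs, so the number of \emph{distinct} projected edges you can force is only $O(\lambda^2/\ell^2)$, a quantity independent of $|\cW_{x,y}|$, while the projected vertex set can have size $\Theta(|\cW_{x,y}|)$---and $|\cW_{x,y}|$ is unbounded in terms of $\lambda$ (in the application it is typically enormous). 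Constantly many edges spread over an arbitrarily large vertex set (for instance, a matching between fiber-blocks) is perfectly compatible with $K_{s,s}$-freeness, so neither \Cref{thm:KST} nor any weighted variant yields a contradiction; a weighted KST strong enough for your purpose is simply false when the weight concentrates on few edges. Your Case~A suffers the same defect even more directly: one internally chorded walk creates order $|\cW_{x,y}|$ typical pairs all reusing a single $G$-edge. Note also that the target should not be to exhibit a $K_{s,s}$, but to contradict the density bound of \Cref{coro:KST}.

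What is missing is a localization step, and this is how the paper argues. Instead of projecting globally, consider $\lambda$-element subsets of $\cW_{x,y}$. Either at least a quarter of them contain a pair whose concatenation is degenerate---which, after double counting with $\delta=1/(100\lambda^2)$, already gives the first bullet---or at least three quarters consist of walks with pairwise disjoint interiors. For such a subset, every chord witnessing typicality lies inside the induced subgraph of $G$ on the $2\lambda(\ell-1)$ interior vertices; if at least $\lambda^2/10$ of the concatenations were typical, that subgraph would carry at least $\lambda^2/20$ edges, contradicting \Cref{coro:KST} with $c=1/(80\ell^2)$ (this is precisely where $\lambda=s\cdot 80^s\ell^{2s}$ enters). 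Hence at least $4\lambda^2/5$ of the concatenations are special, and a second double count over $\lambda$-subsets delivers the second bullet. The essential idea---confining quadratically many chords to a vertex set of size $O(\lambda\ell)$ before applying KST, rather than projecting all of $\cW_{x,y}$ at once---is absent from your proposal, and without it the projection route does not close.
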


Due to subtlety, the proofs of \Cref{claim:degenerate_homo,claim:rich_walk} are postponed to \Cref{sec:thin_subtle}. 

\medskip

Observe that \Cref{claim:degenerate_homo,claim:special_homo} provide upper bounds while \Cref{claim:rich_walk} gives lower bounds on homomorphic $2\ell$-cycles. By summing $|\cC(x, y)|$ over $\cV$, we are going to derive a contradiction on the quantity $\hom(C_{2\ell}, \Gamma')$. Consider the three subsets $\cV_1, \cV_2, \cV_3$ of $\cV$, where
\begin{align*}
    \cV_1 &\eqdef \bigl\{ (x, y) \in \cV : |\cW_{x, y}| \le \lambda \bigr\} = \bigl\{ (x, y) \in \cV : |\cC_{x, y}| \le \lambda^2 \bigr\}, \\
    \cV_2 &\eqdef \bigl\{ (x, y) \in \cV : \text{at least $4\delta$ proportion of $\cC_{x, y}$ is degenerate} \bigr\}, \\
    \cV_3 &\eqdef \bigl\{ (x, y) \in \cV : \text{at least $1/2$ proportion of $\cC_{x, y}$ is special} \bigr\}. 
\end{align*}

Then \Cref{claim:rich_walk} tells us that $\cV = \cV_1 \cup \cV_2 \cup \cV_3$. (This is not necessarily a partition since $\cV_2, \cV_3$ may intersect.) Recall that $|\cV| \le |V(\Gamma)|^2 < n^2d^2$. From the definition of $\cV_1$ and \eqref{eq:hom_C2l} we deduce that 
\begin{equation} \label{eq:V1_bound}
    \sum_{(x, y) \in \cV_1} |\cC_{x, y}| \le \lambda^2 |\cV_1| \le \lambda^2 |\cV| < \frac{1}{4} \hom(C_{2\ell}, \Gamma'). 
\end{equation}
Counting degenerate homomorphic $2\ell$-cycles, the definition of $\cV_2$ and \Cref{claim:degenerate_homo} imply that 
\begin{equation} \label{eq:V2_bound}
   \delta \cdot \hom (C_{2\ell}, \Gamma') \ge \sum_{(x, y) \in \cV_2} 4\delta \cdot |\cC(x, y)|. 
\end{equation}
Counting special homomorphic $2\ell$-cycles, the definition of $\cV_3$ and \Cref{claim:special_homo} imply that 
\begin{equation} \label{eq:V3_bound}
    \frac{2}{\ell} \cdot \hom (C_{2\ell}, \Gamma') \ge \sum_{(x, y) \in \cV_3} \frac{1}{2} \cdot |\cC(x, y)|. 
\end{equation}
Since $\cV = \cV_1 \cup \cV_2 \cup \cV_3$, combining \eqref{eq:V1_bound}, \eqref{eq:V2_bound}, and \eqref{eq:V3_bound} we obtain 
\begin{align*}
    \hom(C_{2\ell}, \Gamma') = \sum_{(x, y) \in \cV} |\cC_{x, y}| &\le \sum_{(x, y) \in \cV_1} |\cC_{x, y}| + \sum_{(x, y) \in \cV_2} |\cC_{x, y}| + \sum_{(x, y) \in \cV_3} |\cC_{x, y}| \\
    &< \Bigl( \frac{1}{4} + \frac{1}{4} + \frac{4}{\ell} \Bigr) \cdot \hom(C_{2\ell}, \Gamma'), 
\end{align*}
which is the desired contradiction because $\ell \ge 10$. 

\subsection{Deferred technical proofs} \label{sec:thin_subtle}

To prove \Cref{claim:degenerate_homo}, we closely follow the proof of \cite[Lemma 2.5]{janzer_disproof}. Nevertheless, we include it here for completeness. We need a technical result introduced by Janzer \cite{janzer_disproof}.

\begin{lemma}[{\cite[Lemma 2.2]{janzer_disproof}}] \label{lem:degenerate_cycle}
    Let $\ell \ge 2$ be an integer. Suppose $G = (V, E)$ is an $n$-vertex graph. Let $X_1, X_2$ be subsets of $V$, and $\sim$ be a symmetric binary relation defined over $V$ such that 
    \vspace{-0.5em}
    \begin{itemize}
        \item for every $u \in V$ and $v \in X_1$, the vertex $v$ has at most $\Delta_1$ neighbours $w \in X_2$ and amongst them at most $s_1$ satisfies $u \sim w$, and
        \vspace{-0.5em}
        \item for every $u \in V$ and $v \in X_2$, the vertex $v$ has at most $\Delta_2$ neighbours $w \in X_1$ and amongst them at most $s_2$ satisfies $u \sim w$. 
    \end{itemize}
    \vspace{-0.5em}
    Write $M \eqdef \max\{ \Delta_1s_2, \Delta_2s_1\}$. Then the number of homomorphic $2\ell$-cycles 
    \[
    (x_1,x_2,\dots ,x_{2\ell}) \in (X_1 \times X_2 \times X_1 \times \dots \times X_2) \cup (X_2 \times X_1 \times X_2 \times \dots \times X_1)
    \]
    in $G$ satisfying $x_i \sim x_j$ for some $i \ne j$ is at most $32 \ell^{3/2}M^{1/2} n^{\frac{1}{2\ell}} \hom(C_{2\ell}, G)^{1-\frac{1}{2\ell}}$. 
\end{lemma}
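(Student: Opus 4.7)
The plan is to prove the bound by decomposing each bad cycle along the pair of indices that witnesses the relation, then combining the hypothesis with Cauchy--Schwarz and Hölder's inequality to bound the resulting weighted walk-count sums. The key intuition is that a single $\sim$-constraint on the cycle saves a factor of roughly $s / \Delta$ over the unconstrained count, but after Cauchy--Schwarz we only retain a square-root of this saving, explaining the $M^{1/2}$ in the stated bound.

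First, by a union bound over the $O(\ell^2)$ choices of an unordered pair $\{i,j\}$ with $x_i \sim x_j$ and by cyclic rotation, it suffices to bound, for each fixed $j \in \{2, \dots, 2\ell\}$, the count $N_j$ of homomorphic $2\ell$-cycles $(x_1, \dots, x_{2\ell}) \in (X_1 \times X_2 \times \dots) \cup (X_2 \times X_1 \times \dots)$ with $x_1 \sim x_j$. Writing $w_k(u,v)$ for the number of $k$-walks in $G$ from $u$ to $v$ respecting the $X_1, X_2$ alternation, every such cycle decomposes into two walks between $x_1$ and $x_j$ of lengths $j-1$ and $2\ell - j + 1$, giving
\[
N_j = \sum_{u \sim v} w_{j-1}(u, v)\, w_{2\ell - j + 1}(u, v).
\]

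Next, I would exploit the hypothesis to extract savings from the constraint $u \sim v$. Consider the final step of a walk counted by $w_{j-1}(u, v)$: given the penultimate vertex $z \in X_1$, the number of neighbors $v \in X_2$ of $z$ with $u \sim v$ is at most $s_1$, rather than the full $\Delta_1$. After applying Cauchy--Schwarz
\[
N_j \le \Bigl( \sum_{u \sim v} w_{j-1}(u, v)^2 \Bigr)^{1/2} \Bigl( \sum_{u \sim v} w_{2\ell - j + 1}(u, v)^2 \Bigr)^{1/2},
\]
I would bound each factor by invoking this saving on the last step of one of the two glued walks and the ordinary $\Delta$-bound on the other, which produces the factor $M^{1/2}$ with $M = \max\{\Delta_1 s_2, \Delta_2 s_1\}$. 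To convert the resulting bound on $T_k \eqdef \sum_{u \sim v} w_k(u, v)^2$ into the stated form, I would view each $w_k(u,v)^2$ as a count of closed $2k$-walks through marked antipodal vertices, apply Hölder's inequality with exponent $\ell/k$, and use the fundamental identity $\hom(C_{2\ell}, G) = \sum_{u,v} w_\ell(u,v)^2$ to compare shorter walk-counts with the full $\hom(C_{2\ell}, G)$. The $n^{1/(2\ell)}$ factor emerges from Hölder on the uniform distribution over the $n$ possible endpoints, and the square-root from the Cauchy--Schwarz above turns $\hom^{1-1/\ell}$ into $\hom^{1-1/(2\ell)}$.

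The main obstacle will be carefully tracking the alternation between $X_1$ and $X_2$ so that the correct constants $(\Delta_1, s_1)$ or $(\Delta_2, s_2)$ are applied at each step of the walks on either side of the marked pair, together with balancing Hölder exponents so that the final bound exhibits precisely $n^{1/(2\ell)}$ and $\hom(C_{2\ell},G)^{1-1/(2\ell)}$. A secondary bookkeeping challenge is accumulating the $O(\ell^2)$ factor from choosing the witness pair, the $O(1)$ losses from Cauchy--Schwarz, and the Hölder constants into the stated $32\ell^{3/2}$, which requires an initial union bound refined enough that the witness position can be moved to index $1$ without duplication.
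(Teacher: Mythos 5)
You should first note that the paper does not prove this statement at all: it is imported verbatim (with adapted notation) from Janzer \cite{janzer_disproof}, so your proposal has to stand on its own, and as written it does not reach the stated bound. The opening moves are fine (rotate to fix the witness pair, write $N_j=\sum_{u\sim v}w_{j-1}(u,v)\,w_{2\ell-j+1}(u,v)$, apply Cauchy--Schwarz), but the endgame fails. After Cauchy--Schwarz you must bound $T_k=\sum_{u\sim v}w_k(u,v)^2$. The only way the hypothesis enters such a quantity is by choosing the related endpoint last, knowing one of its walk-neighbours and its partner $u$; this gives $T_k\le s_\cdot\sum_u d_{k-1}(u)^2$, where $d_{k-1}(u)$ is the number of $(k-1)$-walks starting at $u$. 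There is no ``ordinary $\Delta$-bound on the other glued walk'': once $v$ is fixed, the second walk must terminate at that specific $v$, so the advertised mechanism producing $M^{1/2}=\max\{\Delta_1s_2,\Delta_2s_1\}^{1/2}$ does not parse. Worse, $\sum_u d_{k-1}(u)^2$ is an \emph{open}-walk count, and the only general comparison with $\hom(C_{2\ell},G)$ is $\hom(P_m,G)\le n\,\hom(C_{2\ell},G)^{m/(2\ell)}$ (tight for cliques), which costs a full factor of $n$; this route therefore yields at best roughly $s\,n\,\hom(C_{2\ell},G)^{1-1/\ell}$ per witness pair, not $M^{1/2}n^{1/(2\ell)}\hom(C_{2\ell},G)^{1-1/(2\ell)}$.

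The deeper gap is uniformity in the position of the witness pair, which your Hölder step glosses over. A repaired version of your plan (expand the last step of one arc, use the $s$-saving over the at most $s_\cdot$ admissible endpoints, then Cauchy--Schwarz twice) gives, for a pair at cyclic distance $r$,
\[
N_r\ \le\ (\Delta_\cdot s_\cdot)^{1/2}\,\hom(C_{2(r-1)},G)^{1/2}\,\hom(C_{2(2\ell-r)},G)^{1/2},
\]
and the power-mean inequalities $\hom(C_{2k},G)\le n^{1-k/\ell}\hom(C_{2\ell},G)^{k/\ell}$ for $k\le\ell$ and $\hom(C_{2k},G)\le\hom(C_{2\ell},G)^{k/\ell}$ for $k\ge\ell$ turn this into $(\Delta_\cdot s_\cdot)^{1/2}\,n^{(\ell-r+1)/(2\ell)}\hom(C_{2\ell},G)^{1-1/(2\ell)}$: you recover $n^{1/(2\ell)}$ only for (nearly) antipodal pairs $r\approx\ell$. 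For small $r$ the loss is intrinsic to the intermediate quantity, not to the estimates: taking $G$ to be a clique of order $n^{1/(2\ell)}$ together with a perfect matching on the remaining vertices, one checks that $\hom(C_{2},G)\,\hom(C_{2(2\ell-2)},G)$ exceeds $n^{1/\ell}\hom(C_{2\ell},G)^{2-1/\ell}$ by a polynomial factor once $\ell\ge 3$, so no Hölder manipulation of the two unbalanced closed-walk counts can produce the claimed $n^{1/(2\ell)}$. This is not a cosmetic weakening: in \Cref{claim:degenerate_homo} the lemma is applied with $m\le nd/2$, $M=\alpha D_1D_2$ and $\alpha\approx d^{-1/3}$, and the whole verification rests on $m^{1/\ell}(\log m)^4\ll d^{1/3}$; replacing the exponent $1/(2\ell)$ on $n$ by $1/\ell$, let alone by $1/2$ or $1$, makes that chain false in the relevant density range $d\approx n^{1/2}$. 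So the missing content is precisely the idea by which Janzer extracts $M^{1/2}n^{1/(2\ell)}$ uniformly over all witness positions (and the finer $\ell^{3/2}$ rather than the $\ell^{2}$ a naive union bound gives); without it, your argument proves a strictly weaker inequality that does not suffice for the application in this paper.
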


\begin{proof}[Proof of \Cref{claim:degenerate_homo}]
    Set $m \eqdef |V(\Gamma')|$ and $\alpha \eqdef 16 C g(s) d^{-1/3}$. Then $m \le |V(\Gamma)| \le nd/2$. 
    
    Since $D_1 \ge d/4$ and $D_2 \ge d/4$, the definition of thin $4$-cycles implies that
    \vspace{-0.5em}
    \begin{itemize}
        \item for every $u \in V(\Gamma')$ and $v \in X$, we have $\bigl|\bigl\{ w \in N_{\Gamma'}(v) : u \sim w \bigr\}\bigr| \le 4 C g(s) d^{2/3} \le \alpha D_1$, and 
        \vspace{-0.5em}
        \item for every $u \in V(\Gamma')$ and $v \in Y$, we have $\bigl|\bigl\{ w \in N_{\Gamma'}(v) : u \sim w \bigr\}\bigr| \le 4 C g(s) d^{2/3} \le \alpha D_2$. 
    \end{itemize}
    \vspace{-0.5em}
    Applying \Cref{lem:degenerate_cycle} to $\Gamma'$ with parameters $(n, \Delta_1, \Delta_2, s_1, s_2, M) = (m, D_1, D_2, \alpha D_1, \alpha D_2, \alpha D_1 D_2)$, we deduce that the number of degenerate homomorphic $2\ell$-cycles in $\Gamma'$ is at most
    \begin{equation} \label{eq:degenerate_janzer}
        32 \ell^{3/2} (\alpha D_1 D_2)^{1/2} m^{\frac{1}{2\ell}} \hom(C_{2\ell}, \Gamma')^{1-\frac{1}{2\ell}}. 
    \end{equation}

    As $d \to \infty$, it follows from $m \le nd$ and $\ell \ge 10$ that 
    \begin{align} \label{eq:alpha_bound}
        m^{1/\ell} (\log m)^4 \ll d^{1/3} &\implies 2^{26} \ell^3 \alpha m^{1/\ell} (\log m)^4 \le \delta^2 \nonumber \\
        &\implies 2^{10} \ell^3 (\alpha D_1 D_2) m^{1/\ell} \le \frac{\delta^2 D_1 D_2}{2^{16} (\log m)^4} \nonumber \\
        &\implies 32 \ell^{3/2} (\alpha D_1 D_2)^{1/2} m^{\frac{1}{2\ell}} \le \delta \biggl( \frac{D_1 D_2}{256^2 (\log m)^4} \biggr)^{1/2}. 
    \end{align}
    Since \eqref{eq:hom_C2l} tells us that $\hom(C_{2\ell}, \Gamma') \ge \big(\frac{D_1D_2}{256^2 (\log m)^4}\big)^{\ell}$, combining \eqref{eq:degenerate_janzer} and \eqref{eq:alpha_bound} we conclude that the number of degenerate homomorphic $2\ell$-cycles in $\Gamma'$ is upper bounded by $\delta \hom(C_{2\ell}, \Gamma')$. 
\end{proof}

Notice that the proof of \Cref{claim:degenerate_homo} is the place where we cannot break the $\ell \ge 10$ barrier. 

\begin{proof}[Proof of \Cref{claim:rich_walk}]
    For $\ell$-walk $W = (x, e_1, \dots, e_{\ell-1}, y) \in \cW_{x, y}$, view $e_i$ as a $2$-vertex set in $V(G)$ and let $V^*_G(W) \eqdef \bigcup_{i=1}^{\ell-1} e_i$ be the vertices that the interior of $W$ corresponds to in $G$. For two $\ell$-walks 
    \[
    W = (x, e_1, \dots, e_{\ell-1}, y), \qquad W' = (x, e_1', \dots, e_{\ell-1}', y)
    \]
    let $C(W, W') \eqdef (x, e_1, \dots, e_{\ell-1}, y, e_{\ell-1}', \dots, e_1') \in \cC_{x, y}$ be the  homomorphic $2\ell$-cycle  in $\Gamma'$ obtained by concatenating $W$ and $W'$. For $\lambda$-element subset $\{W_1, \dots, W_{\lambda}\}$ of $\cW_{x, y}$, call it \emph{good} if $C(W_i, W_j)$ is degenerate for some $i \ne j$, and \emph{bad} otherwise (where $V^*_G(W_1), \dots, V^*_G(W_{\lambda})$ are pairwise disjoint). 

    \begin{claim} \label{claim:bad_walk}
        If $\{W_1, \dots, W_{\lambda}\}$ is bad, then $\bigl|\bigl\{ \text{$C(W_i, W_j)$ \emph{is special}} : i, j \in [\lambda] \bigr\}\bigr| \ge \frac{4\lambda^2}{5}$. 
    \end{claim}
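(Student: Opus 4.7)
The plan is to argue by contradiction via \Cref{coro:KST} applied to the $K_{s,s}$-free induced subgraph $G[U]$, where $U := \bigcup_{k=1}^\lambda V^*_G(W_k)$. Suppose toward contradiction that fewer than $\frac{4\lambda^2}{5}$ ordered pairs $(i,j) \in [\lambda]^2$ yield a special cycle $C(W_i, W_j)$; since each diagonal pair $(i, i)$ produces a degenerate (hence non-special) cycle, more than $\lambda^2/5 - \lambda$ off-diagonal pairs must then be typical.

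First I would unpack the structure forced by badness. It implies not only that the sets $V^*_G(W_k)$ are pairwise disjoint, but also that within each single walk the interior rung-edges $e^k_1, \dots, e^k_{\ell-1}$ are pairwise vertex-disjoint in $G$---otherwise some $C(W_k, W_j)$ with $j \ne k$ would already be degenerate. Hence $|U| = 2(\ell-1)\lambda$, and each $V^*_G(W_k)$ already carries an unavoidable ``ladder'' of $3\ell - 5$ edges, namely the $\ell - 1$ rungs together with the $2(\ell - 2)$ matching edges from the thin $4$-cycles between consecutive rungs. Applying \Cref{coro:KST} with $c = \Theta(1/\ell^2)$---the hypothesis $|U| \ge (s-1)/c^s$ follows easily from the choice $\lambda = s \cdot 80^s \cdot \ell^{2s}$---I would obtain $e(G[U]) \le \lambda^2/20$, so at most $\lambda^2/20$ edges of $G[U]$ remain as ``chord'' edges after subtracting the $(3\ell-5)\lambda$ forced ladder edges. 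These chords split into \emph{cross chords} between different $V^*_G(W_i)$ and $V^*_G(W_j)$, and \emph{internal chords} inside a single $V^*_G(W_k)$ between non-adjacent rungs. Every typical pair provides at least one chord; writing $D \subseteq [\lambda]$ for the set of \emph{dirty} walks (those with $\ge 1$ internal chord) and $X$ for the total cross chord count, the key observation is that a cross chord accounts for at most $2$ typical ordered pairs while an internal chord in $W_k$ accounts for $2(\lambda-1)$ of them (one for each $j \ne k$). Hence the typical count is at most $2|D|\lambda + 2X \le 2|D|\lambda + \lambda^2/10$, which is $\le \lambda^2/5$---contradicting the assumption---provided $|D| \le \lambda/20$.

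The main technical obstacle will be establishing $|D| \le \lambda/20$: the naive total-edge estimate alone gives only $|D| \le \lambda^2/20$, a factor of $\lambda$ too weak. I expect that closing this gap will require either a second application of \Cref{coro:KST} \emph{within} each individual $V^*_G(W_k)$---whose $2(\ell-1)$ vertices limit its own $K_{s,s}$-free edge count to $O(s\ell^{2-1/s})$, thereby capping each dirty walk's internal chord contribution---combined with the trade-off $X + I' + (3\ell-5)\lambda \le e(G[U])$, where $I'$ denotes the total number of internal chords; or, alternatively, an adaptation of \Cref{lem:degenerate_cycle} to an auxiliary relation recording $G$-edges between pairs of walk vertices, in the spirit of how \Cref{claim:degenerate_homo} controlled degenerate cycles via Janzer's lemma.
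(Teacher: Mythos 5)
Your cross-chord count is, up to presentation, exactly the paper's proof of \Cref{claim:bad_walk}: assuming at least $\frac{\lambda^2}{10}$ ordered typical pairs, the paper pairs $C(W_i,W_j)$ with $C(W_j,W_i)$, asserts that each unordered typical pair contributes an edge of $G$ between $V^*_G(W_i)$ and $V^*_G(W_j)$, and contradicts \Cref{coro:KST} applied with $c=\frac{1}{80\ell^2}$ to $H=G\bigl[\bigcup_{k}V^*_G(W_k)\bigr]$, whose order $2\lambda(\ell-1)$ satisfies the hypothesis of the corollary by the choice $\lambda=s\cdot 80^s\ell^{2s}$. It does not subtract the forced ladder edges (it only needs the upper bound $e(H)\le \frac{|V(H)|^2}{80\ell^2}<\frac{\lambda^2}{20}$), and it contains no counterpart of your dirty-walk term $|D|$: the sentence ``Here we are counting the edges between $V^*_G(W_i)$ and $V^*_G(W_j)$ in $G$'' simply identifies every typicality witness with a cross edge.

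Consequently the obstacle you isolate is not resolved by the paper's own argument, and your reading of the definitions is correct: under the stated definition of ``typical'' (a $G$-edge between non-adjacent $e_i,e_j$ with $i,j\notin\{1,\ell+1\}$), a chord between two non-consecutive rungs inside a single walk witnesses typicality of \emph{every} concatenation through that walk, and badness (which only forbids vertex-sharing) does not exclude such chords. Your proposal does not close this case, and neither of your suggested repairs works as stated: a second application of \Cref{coro:KST} inside one $V^*_G(W_k)$ only caps the number of internal chords per walk, whereas what is needed is a bound on the number of walks carrying at least one internal chord (one chord already renders all $\approx 2\lambda$ pairs through that walk typical, and $\lambda$ such chords cost only $\lambda$ edges, far below the $\frac{\lambda^2}{20}$ budget, so no edge count on $G[U]$ can give $|D|\le\frac{\lambda}{20}$); and adapting \Cref{lem:degenerate_cycle} to a relation recording $G$-adjacency between rungs lacks the required input, since the thinness bound \eqref{eq:sim_edge} controls only the vertex-sharing relation via diagonal codegrees of thin $4$-cycles, not the number of $\Gamma'$-neighbours of one rung that send a $G$-edge to another rung. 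Note finally the definitional tension your gap exposes: if ``typical'' is read as requiring the witness edge to join the two \emph{different} constituent walks, then your cross-chord count (equivalently, the paper's) finishes \Cref{claim:bad_walk} with no dirty-walk term, but that reading breaks the proof of \Cref{claim:special_homo}, whose ``at most $4$ rotations are special'' step needs any witness avoiding positions $1,\ell+1$, including a same-walk chord, to count as typical. So your attempt reproduces the paper's argument on the part the paper writes down, and the part where you are stuck is a genuine unaddressed case rather than a misunderstanding on your side; as written, both arguments establish the claim only for bad families in which no walk has an internal chord.
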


    \begin{poc}
        In the graph $G$, we consider the induced subgraph $H \eqdef G \bigl[ V^*_G(W_1) \cup \dots \cup V^*_G(W_{\lambda}) \bigr]$. Recall that $\{W_1, \dots, W_{\lambda}\}$ is bad hence the vertices are pairwise distinct. Then $|V(H)| = 2\lambda(\ell-1)$. If $\bigl|\bigl\{ \text{$C(W_i, W_j)$ is typical} : i, j \in [\lambda] \bigr\}\bigr| \ge \frac{\lambda^2}{10}$, then pairing up $C(W_i, W_j)$ and $C(W_j, W_i)$, we have
        \[
        |E(H)| \ge \frac{\lambda^2}{20} > \frac{|V(H)|^2}{80\ell^2}. 
        \]
        Here we are counting the edges between $V^*_G(W_i)$ and $V^*_G(W_j)$ in $G$. However, since $G$ (and hence $H$) is $K_{s, s}$-free, \Cref{coro:KST} applied to $c = \frac{1}{80\ell^2}$ implies that $|E(H)| \le \frac{|V(H)|^2}{80\ell^2}$, a contradiction. So, 
        \[
        \bigl|\bigl\{ \text{$C(W_i, W_j)$ is special} \bigr\}\bigr| = \lambda(\lambda-1) - \bigl|\bigl\{ \text{$C(W_i, W_j)$ is typical} \bigr\}\bigr| \ge \frac{4\lambda^2}{5}. \qedhere
        \]
    \end{poc}
    
    Write $w \eqdef |\cW_{x, y}|$. There are $\binom{w}{\lambda}$ many $\lambda$-sets of $\ell$-walks in $\cW_{x, y}$. 
    \vspace{-0.5em}
    \begin{itemize}
        \item If at least $1/4$ of such $\lambda$-sets are good, then from $\delta = \frac{1}{100\lambda^2}$ we deduce that at least
        \[
        \frac{1}{4} \cdot \frac{\binom{w}{\lambda}}{\binom{w-2}{\lambda-2}} = \frac{1}{4} \cdot \frac{w(w-1)}{\lambda(\lambda-1)} \ge 4\delta w^2 = 4\delta \cdot |\cW_{x, y}|^2 = 4\delta \cdot |\cC_{x, y}|
        \]
        many homomorphic $2\ell$-cycles in $\cC_{x, y}$ are degenerate. 
        \vspace{-0.5em}
        \item If at least $3/4$ of such $\lambda$-sets are bad, then \Cref{claim:bad_walk} implies that at least
        \[
        \frac{3}{4} \cdot \frac{4\lambda^2}{5} \cdot \frac{\binom{w}{\lambda}}{\binom{w-2}{\lambda-2}} = \frac{3\lambda^2}{5} \cdot \frac{w(w-1)}{\lambda(\lambda-1)} \ge \frac{w^2}{2} = \frac{1}{2} \cdot |\cW_{x, y}|^2 = \frac{1}{2} \cdot |\cC_{x, y}|
        \]
        many homomorphic $2\ell$-cycles in $\cC_{x, y}$ are special. 
    \end{itemize}
    \vspace{-0.5em}
    Combining the cases above, the proof of \Cref{claim:rich_walk} is complete. 
\end{proof}

\subsection{Proof of \texorpdfstring{\Cref{thm:C4_supersaturation}}{Theorem 5.2}} \label{sec:C4_supersaturation}

For any vertex $v \in V(G)$, we denote 
\[
\cP_{*v*} \eqdef \bigl\{ \text{induced copies of $2$-paths $xvy$ in $G$ for some $x, y \in V(G)$} \bigr\}. 
\]
Notice that $xvz$ and $zvx$ are regarded as a same copy. If $\deg(v) \ge 6^s s$, then \Cref{coro:KST} implies that the number of edges in $N(v)$ is upper bounded by $\frac{\deg(v)^2}{6}$. Observe that each non-edge in $N(v)$ corresponds to a unique path in $\cP_{*v*}$. This implies that 
\[
|\cP_{*v*}| \ge \binom{\deg(v)}{2} - \frac{\deg(v)^2}{6} \ge \frac{\deg(v)^2}{4}. 
\]
Denote by $\mathds{1}_{E}$ the indicator of event $E$. Then in general, we have that for any $v\in V(G)$,
\begin{equation} \label{eq:ind_path_v}
|\cP_{*v*}| \ge \frac{\deg(v)^2}{4} \cdot \mathds{1}_{\{\deg(v) \ge 6^s s\}} + 0 \cdot \mathds{1}_{\{\deg(v) < 6^s s\}} \ge \frac{\deg(v)^2}{4} - 6^{2s}s^2. 
\end{equation}
Summing over all the vertices in $G$, by Cauchy--Schwarz, we obtain
\begin{align} \label{eq:Pv}
    \sum_{v \in V(G)} |\cP_{*v*}| &\ge \frac{1}{4} \sum_{v \in V(G)} \deg(v)^2 - 6^{2s} s^2 n \ge \frac{n}{4} \biggl( \frac{1}{n} \sum_{v \in V(G)} \deg(v) \biggr)^2 - 6^{2s} s^2 n \nonumber \\
    &\ge \frac{e(G)^2}{n}- 6^{2s} s^2 n \ge \frac{3e(G)^2}{4n}. 
\end{align}

For any pair of distinct vertices $u, v \in V(G)$, we define
\begin{align*}
    \cP_{u*v} &\eqdef \bigl\{ \text{induced copies of $2$-paths $uxv$ in $G$ for some $x \in V(G)$} \bigr\}, \\
    \cC_{u*v*} &\eqdef \bigl\{ \text{induced copies of $4$-cycles $uxvy$ in $G$ for some $x, y \in V(G)$} \bigr\}. 
\end{align*}
Notice that $uxv, vxu$ are regarded as a same $2$-path, and $uxvy, \, uyvx, \, vxuy, \, vyux$ are regarded as a same $4$-cycle. Note that each pair of paths in $\cP_{u*v}$ whose midpoints are not adjacent corresponds to an induced $4$-cycle in $\cC_{u*v*}$. Thus, similar to \eqref{eq:ind_path_v}, we may deduce from \Cref{coro:KST} that 
\[
|\cC_{u*v*}| \ge \frac{|\cP_{u*v}|^2}{4} - 6^{2s}s^2. 
\]
Observe that each induced $4$-cycle (e.g., $uxvy$) is contained in both $\cC_{u*v*}$ and $\cC_{x*y*}$. It follows that the total number of induced copies of $4$-cycles in $G$ is
\begin{align} \label{eq:Cuv}
    \frac{1}{2} \sum_{\{u,v\} \in \binom{V(G)}{2}} |\cC_{u*v*}| &\ge \frac{1}{2} \Biggl( \sum_{\{u,v\} \in \binom{V(G)}{2}} \frac{|\cP_{u*v}|^2}{4} - 6^{2s} s^2 {\binom{n}{2}} \Biggr) \nonumber \\
    &\ge \frac{1}{4n^2} \Biggl( \sum_{\{u,v\} \in \binom{V(G)}{2}} |\cP_{u*v}| \Biggr)^2 - \frac{1}{2} \cdot 6^{2s} s^2 n^2. 
\end{align}
Notice that $\sum\limits_{v \in V(G)} |\cP_{*v*}| = \sum\limits_{\{u,v\} \in \binom{V(G)}{2}} |\cP_{u*v}|$, because both of the sums count the total number of induced copies of $2$-paths in $G$. We thus deduce from \eqref{eq:Pv} and \eqref{eq:Cuv} that there are at least
\[
\frac{1}{4n^2} \cdot \biggl( \frac{3e(G)^2}{4n} \biggr)^2 - \frac{1}{2} \cdot 6^{2s} s^2 n^2 \ge \frac{e(G)^4}{16n^4}
\]
many induced copies of $4$-cycles in $G$. The proof of \Cref{thm:C4_supersaturation} is complete. 

\section{Concluding remarks} \label{sec:remark}


\Cref{thm:ind_theta,thm:ind_prism} give correct asymptotics of induced Tur\'{a}n numbers on $n$. However, the dependencies on $s$ we obtain are relatively poor, as the upper bounds are exponential while the lower bounds are polynomial. In particular, these upper bounds on $\ex^*(n, H, s)$ in terms of $s$ grow with the order of $H$ yet the lower bounds do not. We believe that $\ex^*(n, \Theta_{\ell}^t, s)$ should grow with $t$, and leave the improvement of \Cref{prop:blowup} as an open problem. 
   
The approach of Gao et al.~\cite{gao_janzer_liu_xu} finds a copy of $C_{2\ell}^{\square}$ in every graph $G$ with $e(G) \ge C n^{3/2}$ for all $\ell \ge 4$. However, their method does not seem to be easily adapted to find an induced copy. In our argument, we cannot go below the $\ell = 10$ barrier because of the proof of \Cref{claim:degenerate_homo}. An interesting open problem is to come up with different arguments and establish the same result for smaller $\ell$. We believe that the $O(n^{3/2})$ upper bound holds for all $\ell\ge 3$.

\bibliographystyle{plain}
\bibliography{ind_bip_turan}

\end{document}